\documentclass[a4paper,11pt]{article}
\usepackage{amsmath}
\usepackage{amsfonts}
\usepackage{amssymb}
\usepackage{amsthm}
\usepackage{graphicx}

\setcounter{page}{1} 

\setlength{\textheight}{21.6cm} 

\setlength{\textwidth}{14cm} 

\setlength{\oddsidemargin}{1cm} 

\setlength{\evensidemargin}{1cm} 

\pagestyle{myheadings} 

\thispagestyle{empty}

\date{} 
\begin{document} 
\centerline {$\displaystyle \delta$-PRIMARY ELEMENTS IN LATTICE MODULES} 
\centerline{} 

\centerline{\bf {A.V.Bingi$^{1}$, C.S.Manjarekar$^{2}$}}
\centerline{} 

\centerline{} 

\centerline{$^{1}$ Department of Mathematics}
\centerline{St.~Xavier's College(autonomous), Mumbai-400001, India}
\centerline{$email: ashok.bingi@xaviers.edu$}

\centerline{}
\centerline{$^{2}$ Formerly Professor at Department of Mathematics}
\centerline{Shivaji University, Kolhapur-416004, India}
\centerline{$email: csmanjrekar@yahoo.co.in$} 

\newtheorem{thm}{Theorem}[section]
 \newtheorem{c1}[thm]{Corollary}
 \newtheorem{l1}[thm]{Lemma}
 \newtheorem{prop}[thm]{Proposition}
 \newtheorem{d1}[thm]{Definition}
\newtheorem{rem}[thm]{Remark}
 \newtheorem{e1}[thm]{Example}
\begin{abstract}
  In this paper, we introduce the expansion function $\delta$ on an $L$-module $M$. We define and investigate a $\delta$-primary element in an $L$-module $M$. Its characterizations and many of its properties are obtained. $\delta_0$-primary and $\delta_1$-primary elements of an $L$-module $M$ are related with 2-absorbing, 2-absorbing primary elements of an $L$-module $M$ to obtain their special properties. The element $\delta_1(N)\in M$ is related to $rad(N)\in M$, the radical element of $M$ to obtain its properties where $N\in M$. We define a $\delta_L$-primary element in an $L$-module $M$ where $\delta_L$ is an expansion function on $L$ and find relation among a $\delta_L$-primary element of $M$ and its corresponding $\delta_L$-primary element of $L$.
\end{abstract}

{\bf 2010 Mathematics Subject Classification:} 06D10, 06E10, 06F10 
\paragraph*{}
{\bf Keywords:-} expansion function, $\delta$-primary element, meet prime element, meet preserving property, radical element, $\delta_L$-primary element
\section{Introduction}
\paragraph*{}  The study of expansions of ideals with $\delta$-primary ideals for commutative rings and semirings is carried out by D.~Zhao in  \cite{Z} and S.~Atani~et.~al. in \cite{ASK} respectively.  Further work on $\delta$-primary co-ideals of a commutative semiring and on $\delta$-primary submodules of modules is carried out by S.~Atani~et.~al. in \cite{APK} and G.~Yesilot~et.~al. in \cite{YSUU} respectively. In multiplicative lattices, the study of $\delta$-primary elements is done by C.~S.~Manjarekar and A.~V.~Bingi in \cite{MB}. Our aim is to extend the notion of $\delta$-primary elements in a multiplicative lattice to the notion of  $\delta$-primary elements in a lattice module and study its properties.

A multiplicative lattice $L$ is a complete lattice provided with commutative, associative and join distributive multiplication in which the largest element $1$ acts as a multiplicative identity. An element $e\in L$ is called meet principal if $a\wedge be=((a:e)\wedge b)e$ for all $a,b\in L$. An element $e\in L$ is called join principal if $(ae\vee b):e=(b:e)\vee a$ for all $a,b\in L$. An element $e\in L$ is called principal if $e$ is both meet principal and join principal. An element $a\in L$ is called compact if for  $X\subseteq L$, $a\leqslant \vee X$  implies the existence of a finite number of elements $a_1,a_2,\cdot\cdot\cdot,a_n$ in $X$ such that $a\leqslant a_1\vee a_2\vee\cdot\cdot\cdot\vee a_n$. The set of compact elements of $L$ will be denoted by $L_\ast$. If each element of $L$ is a join of compact elements of $L$ then $L$ is called a compactly generated lattice or simply a CG-lattice.  $L$ is said to be a principally generated lattice or simply a PG-lattice if each element of $L$ is the join of principal elements of $L$. Throughout this paper, $L$ denotes a compactly generated multiplicative lattice with $1$ compact in which every finite product of compact elements is compact.
       
An element $a\in L$ is said to be proper if $a<1$. A proper element $p\in L$ is called a prime element if $ab\leqslant p$ implies $a\leqslant p$ or $b\leqslant p$ where $a,b\in L$ and is called a primary element if $ab\leqslant p$ implies $a\leqslant p$ or $b^n\leqslant p$ for some $n\in Z_+$ where $a,b\in L_\ast$. For $a,b\in L $, $(a:b)= \vee \{x \in L \mid xb \leqslant a\} $. The radical of $a\in L$ is denoted by $\sqrt{a}$ and is defined as $\vee \{ x \in L_\ast \mid x^{n} \leqslant a$, for  some  $n\in Z_+\}$. According to \cite{JTY}, a proper element $q\in L$ is said to be 2-absorbing if for all $a, b, c\in L$,  $abc\leqslant q$ implies either $ab\leqslant q$ or $bc\leqslant q$ or $ca\leqslant q$. According to \cite{CYT}, a proper element $q\in L$ is said to be 2-absorbing primary if for all $a, b, c\in L$,  $abc\leqslant q$ implies either $ab\leqslant q$ or $bc\leqslant \sqrt{q}$ or $ca\leqslant \sqrt{q}$. The reader is referred to \cite{AAJ}, \cite{CYT} and \cite{JTY} for general background and terminology in multiplicative lattices.	
       	   
           Let $M$ be a complete lattice and $L$ be a multiplicative lattice. Then $M$ is called $L$-module or module over $L$ if there is a multiplication between elements of $L$ and $M$ written as $aB$ where $a \in L$  and $B \in M$ which satisfies the following properties: \\
           \textcircled{1}~~$(\underset{\alpha}{\vee} a_\alpha)A=\underset{\alpha}{\vee}(a_\alpha\ A)$  \\
           \textcircled{2}~~$a(\underset{\alpha}{\vee} A_\alpha)=\underset{\alpha}{\vee} (a\ A_\alpha)$  \\
           \textcircled{3}~~$(ab)A=a(bA)$  \\
           \textcircled{4}~~$1A=A$  \\
           \textcircled{5}~~$0A=O_M,  \ for \ all \ a, a_\alpha\ ,b \in L \ and \ A, A_\alpha \in M $ where $1$ is the supremum of $L$ and $0$ is  the infimum of $L$. We denote by $O_M$ and $I_M$ for the least element and the greatest element of $M$ respectively. Elements of $L$ will generally be denoted by $a,b,c,\cdot\cdot\cdot $ and elements of $M$ will generally be denoted by $A,B,C,\cdot\cdot\cdot$   
           
           Let $M$ be an $L$-module. For $N \in M$ and $a \in L$ , $(N:a) = \vee \{X \in M  \ \mid \  aX \leqslant N \}$. For $A,B \in M$, $(A:B) = \vee \{ x \in L  \ \mid  \ xB \leqslant A \} $. If $(O_M:I_M)=0$ then $M$ is called a faithful $L$-module. An $L$-module $M$ is called a multiplication lattice module if for every element $N \in M$ there exists an element $a \in L$ such that $N = aI_M$. An element $N\in M$ is called meet principal if $(b\wedge (B:N))N=bN\wedge B$ for all $b\in L, B\in M$.  An element $N\in M$ is called join principal if $b\vee (B:N)=((bN\vee B):N)$ for all $b\in L, B\in M$. An element $N\in M$ is said to be principal if $N$ is both meet principal and join principal. $M$ is said to be a PG-lattice $L$-module if each element of $M$ is the join of principal elements of $M$. An element $N\in M$ is called compact if $N\leqslant \underset{\alpha}{\vee} A_{\alpha}$ implies $N\leqslant A_{\alpha_{1}}\vee A_{\alpha_{2}}\vee\cdot\cdot\cdot\vee A_{\alpha_{n}}$ for some finite subset $\{\alpha_1,\alpha_2,\cdot\cdot\cdot,\alpha_n\}$. The set of compact elements of $M$ is denoted by $M_\ast$. If each element of $M$ is a join of compact elements of $M$ then $M$ is called a CG-lattice $L$-module.  
       An element $N\in M$ is said to be proper if $N < I_M$. A proper element $N\in M$ is said to be maximal if whenever there exists an element $B\in M$ such that $N \leqslant B$ then either $N=B$ or $B=I_M$. A proper element $N\in M$ is said to be prime if for all $a\in L$,  $X\in M$, $aX \leqslant N$ implies either $X\leqslant N$ or $aI_M \leqslant N$. A proper element $N\in M$ is said to be primary if for all $a\in L$, $X\in M$, $aX \leqslant N$ implies either $X\leqslant N$ or $a^n I_M \leqslant N$ for some $n\in Z_+$.  A proper element $N\in M$ is said to be a  radical element if $(N:I_M)=\sqrt{(N:I_M)}$ where $\sqrt{(N:I_M)}= \vee \{x\in L_\ast \mid x^n\leqslant (N:I_M) \ for \ some \ n\in Z_+\}$.  A proper element $N\in M$ is said to be semiprime if for all $a,\ b\in L$, $abI_M\leqslant N$ implies either $aI_M\leqslant N$ or $bI_M\leqslant N$. A proper element $N\in M$ is said to be $p$-prime if $N$ is prime and $p=(N:I_M)\in L$ is prime. A proper element $N\in M$ is said to be $p$-primary if $N$ is primary and $p=\sqrt{N:I_M}\in L$ is prime. A prime element $N\in M$ is said to be  minimal prime over $X\in M$ if $X\leqslant N$ and whenever there exists a prime element $Q\in M$ such that $X\leqslant Q\leqslant N$ then $Q=N$.  According to \cite{MB1}, a proper element $Q$ of an $L$-module $M$ is said to be 2-absorbing if for all $a, b\in L$, $N\in M$, $abN\leqslant Q$ implies either $ab\leqslant (Q:I_M)$ or $bN\leqslant Q$ or $aN\leqslant Q$.   According to \cite{BK}, a proper element $Q$ of an $L$-module $M$ is said to be 2-absorbing primary if for all $a, b\in L$, $N\in M$, $abN\leqslant Q$ implies either $ab\leqslant (Q:I_M)$ or $bN\leqslant (\sqrt{Q:I_M})I_M$ or $aN\leqslant (\sqrt{Q:I_M})I_M$. The reader is referred to \cite{A}, \cite{CT}, \cite{CTUO}, \cite{J} and \cite{MK1} for general background and terminology in lattice modules.
      
       According to \cite{MB},  an expansion function on a multiplicative lattice $L$ is a function $\delta:L \longrightarrow L $ which satisfies the following two conditions: \textcircled{1} $a\leqslant\delta(a)$ for all $a\in L$, \textcircled{2} $a\leqslant b$ implies $\delta(a)\leqslant\delta(b)$ for all $a,b\in L$ and further given an expansion function $\delta$ on $L$, an element $p\in L$ is called $\delta$-primary if for $a,b\in L$, $ab\leqslant p$ implies either $a\leqslant p$ or $b\leqslant\delta(p)$.
       
         This paper is motivated by \cite{MB},\cite{YSUU} and \cite{Z}. It should be mentioned that apart from new results obtained, there is a significant difference between some results obtained in this paper and the already existing ones in \cite{MB} because principal elements of $M$ are used wherever needed and some more conditions have to be imposed on $M$. The most important outcome of this paper is the result $({\underset{\alpha}{\wedge}a_\alpha}) I_M=\underset{\alpha}{\wedge}(a_\alpha I_M)$ which is presented as Lemma \ref{L-C91} and this result is somewhat close to the dual of property \textcircled{1} in the definition of an $L$-module $M$. We define a meet prime element in an $L$-module $M$ and prove that every maximal element in a multiplication $L$-module $M$ is meet prime. This result is presented as Lemma \ref{L-C92}. 
                
                In the first section of this paper, we introduce  expansion function on an $L$-module $M$ and study $\delta$-primary elements of $M$ which unify prime and primary elements of $M$. In the second section, we define a special property of expansion function on an $L$-module $M$. Then we obtain special properties of $\delta_0$-primary and $\delta_1$-primary elements in an $L$-module $M$ by relating them with 2-absorbing and 2-absorbing primary elements of $M$. Also, some properties of the element $\delta_1(N)\in M$ are found by relating it with $rad(N)\in M$ where $N\in M$. In the last section, given an expansion function $\delta_L$ on $L$, a $\delta_L$-primary element of an $L$-module $M$ is defined and its characterizations are obtained. Finally, with respect to expansion function $\delta$ on $L$, we find the interrelation between an element in an $L$-module $M$ and its corresponding element in $L$. 
                
               We denote expansion function on a multiplicative lattice $L$ by  $\delta_L$ instead of $\delta$ wherever needed to distinguish it from expansion function $\delta$ on an $L$-module $M$.

\section{Expansion function $\delta$ on $M$ and $\delta$-primary element of $M$}
       
       \paragraph*{} We begin with introducing the notion of an expansion function on an $L$-module $M$.
           
       \begin{d1} 
      An {\bf expansion function} on an $L$-module $M$ is a function $\delta:M \longrightarrow M$ which satisfies the following two conditions:
      
      \textcircled{1}. $A\leqslant\delta(A)$ for all $A\in M$.
      
      \textcircled{2}. $A\leqslant B$ implies $\delta(A)\leqslant\delta(B)$ for all $A,\ B\in M$. 
      \end{d1}
       
      The reader can verify that the following functions on an $L$-module $M$ are expansion functions:
      \begin{enumerate}
      \item the identity function $\delta_0:M\longrightarrow M$ defined as $\delta_0(A)= A$ for all $A\in M$.      
      \item the function $\delta_1:M\longrightarrow M$ defined as $\delta_1(A)=(\sqrt{A:I_M})I_M$ for all $A\in M$ provided $M$ is a multiplication lattice module.
      \item the function $\delta_2:M\longrightarrow M$ defined as $\delta_2(A)$=$\wedge\{$ $H\in M \mid A\leqslant H$ and $H$  is a maximal element of $M$  $\}$ for all proper elements $A\in M$ and $\delta_2(I_M)=I_M$.
      \end{enumerate}  
                 
        The following result shows that the arbitrary meet of any collection of expansion functions on an $L$-module $M$ is again an expansion function on $M$.
              
              \begin{thm}
              Given any two expansion functions $\gamma_1$ and $\gamma_2$ on an $L$-module $M$, define a function $\delta:M \longrightarrow M$ as $\delta(A)=\gamma_1(A)\wedge\gamma_2(A)$ for all $A\in M$. Then $\delta$ is also an expansion function on $M$.
              \end{thm}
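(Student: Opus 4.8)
The plan is to verify the two defining conditions of an expansion function for $\delta$ directly, invoking only the fact that $M$ is a complete lattice (so arbitrary meets exist and are characterized by the greatest-lower-bound property) together with the two axioms assumed for $\gamma_1$ and $\gamma_2$. No structural properties of the $L$-action are needed, so the argument is purely order-theoretic.

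First I would check condition \textcircled{1}, namely $A\leqslant\delta(A)$ for every $A\in M$. Since $\gamma_1$ and $\gamma_2$ are expansion functions, their first axiom gives $A\leqslant\gamma_1(A)$ and $A\leqslant\gamma_2(A)$ simultaneously. Thus $A$ is a lower bound of the two-element set $\{\gamma_1(A),\gamma_2(A)\}$, and by the defining property of the meet in the complete lattice $M$ we conclude $A\leqslant\gamma_1(A)\wedge\gamma_2(A)=\delta(A)$.

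Next I would check condition \textcircled{2}, the monotonicity: assume $A\leqslant B$. Applying the second axiom to each of $\gamma_1$ and $\gamma_2$ yields $\gamma_1(A)\leqslant\gamma_1(B)$ and $\gamma_2(A)\leqslant\gamma_2(B)$. I then chain these with the two meet-projections to obtain
\[
\delta(A)=\gamma_1(A)\wedge\gamma_2(A)\leqslant\gamma_1(A)\leqslant\gamma_1(B)
\quad\text{and}\quad
\delta(A)=\gamma_1(A)\wedge\gamma_2(A)\leqslant\gamma_2(A)\leqslant\gamma_2(B).
\]
Hence $\delta(A)$ is a common lower bound of $\gamma_1(B)$ and $\gamma_2(B)$, so again by the greatest-lower-bound property $\delta(A)\leqslant\gamma_1(B)\wedge\gamma_2(B)=\delta(B)$, establishing monotonicity. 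This completes the verification that $\delta$ is an expansion function on $M$.

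There is no genuine obstacle here: both steps reduce to the universal property of binary meets in a complete lattice, and the only facts used about $\gamma_1,\gamma_2$ are their two axioms. I would also remark that the identical argument, replacing the two-element meet by an arbitrary meet $\underset{\alpha}{\wedge}\gamma_\alpha(A)$ (which exists since $M$ is complete), shows that the meet of \emph{any} family of expansion functions on $M$ is again an expansion function, which is the more general statement flagged in the sentence preceding the theorem.
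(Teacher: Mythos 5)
Your proof is correct and follows essentially the same route as the paper's: both verify the two axioms directly, deducing $A\leqslant\gamma_1(A)\wedge\gamma_2(A)$ from the extensivity of each $\gamma_i$ and the monotonicity of $\delta$ from the monotonicity of each $\gamma_i$ together with the universal property of the meet. Your closing remark about arbitrary families is a welcome bonus, matching the generalization the paper itself flags just before the theorem.
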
              
              \begin{proof}
             Let $A\in M$. Then as $\gamma_1$ and $\gamma_2$ are expansion functions on $M$, we have $A\leqslant\gamma_1(A)$ and  $A\leqslant\gamma_2(A)$ which implies $A\leqslant\gamma_1(A)\wedge\gamma_2(A)=\delta(A)$. Now for $A,\ B\in M$, let $A\leqslant B$. Then as $\gamma_1$ and $\gamma_2$ are expansion functions on $M$, we have $\gamma_1(A)\leqslant\gamma_1(B)$ and  $\gamma_2(A)\leqslant\gamma_2(B)$. So $\gamma_1(A)\wedge\gamma_2(A)\leqslant\gamma_1(B)\wedge\gamma_2(B)$ which implies $\delta(A)\leqslant\delta(B)$ and hence $\delta$ is an expansion function on $M$.
             \end{proof}
              
               We now introduce the notion of $\delta$-primary elements of an $L$-module $M$ which is the generalization of the concept of $\delta$-primary elements of $L$ studied in \cite{MB}.
              \begin{d1} 
               Given an expansion function $\delta$ on an $L$-module $M$, a proper element $P\in M$ is called {\bf $\delta$-primary} if for all $A\in M$ and $a\in L$, $aA\leqslant P$ implies either $A\leqslant P$ or $aI_M\leqslant\delta(P)$. 
              \end{d1}       
         The above definition is equivalent to the following:
         
         Given an expansion function $\delta$ on an $L$-module $M$, a proper element $P\in M$ is called {\bf $\delta$-primary} if for all $A\in M$ and $a\in L$, $aA\leqslant P$ implies either $A\leqslant\delta(P)$ or $aI_M\leqslant P$.
         
         The reader can verify that given an expansion function $\delta$ on an $L$-module $M$, the function  $E_\delta (A):M\longrightarrow M$ defined as $E_\delta (A)= \wedge \{$ $J\in M \mid A \leqslant J$ and $J$ is a $\delta$-primary element of $M$ $\}$ for all proper elements $A\in M$ with $E_\delta(I_M)=I_M$ is an expansion function on an $L$-module $M$.
         
        Now we give characterizations of a $\delta$-primary element of an $L$-module $M$.
                 
       \begin{thm}
       Let $M$ be a CG-lattice $L$-module, $N\in M$ be a proper element and $\delta$ be an expansion function on $M$. Then the following statements are equivalent:-
       \begin{enumerate}
        \item $N$ is a $\delta$-primary element of $M$.
        \item for every $r\in L$ such that $r\nleqslant (\delta(N):I_M)$, we have $(N:r)=N$.
        \item for every $r\in L_\ast$, $A\in M_\ast$ , if $rA\leqslant N$ then either $A\leqslant N$ or $rI_M\leqslant \delta(N)$.
       \end{enumerate} 
       \end{thm}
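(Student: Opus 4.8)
The plan is to establish the cycle of implications $(1)\Rightarrow(2)\Rightarrow(3)\Rightarrow(1)$. Throughout I would lean on two routine residuation facts valid in any $L$-module: first, $N\leqslant(N:r)$ and $r(N:r)\leqslant N$ for every $r\in L$ (the former since $rN\leqslant 1N=N$, the latter from property \textcircled{2}, as $r(N:r)=\underset{rX\leqslant N}{\vee}(rX)\leqslant N$); and second, the adjunction $rI_M\leqslant\delta(N)$ if and only if $r\leqslant(\delta(N):I_M)$, which lets me translate the conclusion ``$aI_M\leqslant\delta(N)$'' of $\delta$-primacy into a statement about residuals and back.

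For $(1)\Rightarrow(2)$, given $r\in L$ with $r\nleqslant(\delta(N):I_M)$, I would apply the definition of $\delta$-primary to the inequality $r(N:r)\leqslant N$. Since $r\nleqslant(\delta(N):I_M)$ forces $rI_M\nleqslant\delta(N)$ by the adjunction, $\delta$-primacy yields the other alternative $(N:r)\leqslant N$; combined with the always-true $N\leqslant(N:r)$ this gives $(N:r)=N$. The implication $(2)\Rightarrow(3)$ is the quickest: given compact $r\in L_\ast$, $A\in M_\ast$ with $rA\leqslant N$, if $rI_M\leqslant\delta(N)$ we are done, and otherwise $r\nleqslant(\delta(N):I_M)$, so $(2)$ gives $(N:r)=N$ and hence $A\leqslant(N:r)=N$.

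The real work is $(3)\Rightarrow(1)$, and this is where the hypothesis that $M$ is a CG-lattice $L$-module (together with $L$ being compactly generated) becomes essential. Given arbitrary $a\in L$, $A\in M$ with $aA\leqslant N$ and $A\nleqslant N$, I would first extract from compact generation of $M$ a compact element $A_0\leqslant A$ with $A_0\nleqslant N$ (if every compact element below $A$ lay under $N$, then $A$, being their join, would too). Writing $a=\underset{r\leqslant a,\ r\in L_\ast}{\vee}r$, for each such compact $r$ I have $rA_0\leqslant aA\leqslant N$, so $(3)$ applied to the pair $(r,A_0)$ together with $A_0\nleqslant N$ forces $rI_M\leqslant\delta(N)$. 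Finally I would reassemble using property \textcircled{1}: $aI_M=(\underset{r}{\vee}r)I_M=\underset{r}{\vee}(rI_M)\leqslant\delta(N)$, which is exactly the alternative needed for $N$ to be $\delta$-primary.

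The main obstacle is precisely this last lifting step: choosing a single compact witness $A_0$ that works uniformly for \emph{every} compact $r\leqslant a$, and then passing the resulting family of bounds through the join via property \textcircled{1}. The other two implications are essentially formal consequences of residuation and the adjunction, requiring no appeal to compactness.
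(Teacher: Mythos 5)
Your proposal is correct and follows essentially the same route as the paper: the same cycle $(1)\Rightarrow(2)\Rightarrow(3)\Rightarrow(1)$, with $(1)\Rightarrow(2)$ via $r(N:r)\leqslant N$ and the adjunction $rI_M\leqslant\delta(N)\Leftrightarrow r\leqslant(\delta(N):I_M)$, and $(3)\Rightarrow(1)$ via compact generation. The only (cosmetic) difference is in the last step, where the paper joins each compact $x\leqslant a$ with a fixed compact $x'\leqslant a$ and works with $Y\vee Y'$, whereas you use a single compact witness $A_0\nleqslant N$ and pass the bounds $rI_M\leqslant\delta(N)$ through the join directly; both are valid.
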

        \begin{proof}
          $(1)\Longrightarrow (2)$. Suppose $(1)$ holds. Let $Q\leqslant (N:r)$ be such that $r\nleqslant (\delta(N):I_M)$ for $Q\in M, r\in L$. Then $rQ\leqslant N$. As $N$ is a $\delta$-primary element of $M$ and $rI_M\nleqslant \delta(N)$, we have, $Q\leqslant N$ and thus $(N:r)\leqslant N$. Since $rN\leqslant N$ gives $N\leqslant (N:r)$, it follows that $(N:r)=N$.\\
          $(2)\Longrightarrow (3)$. Suppose $(2)$ holds. Let $rA\leqslant N$ and  $r\nleqslant (\delta(N):I_M)$ for  $r\in L_\ast$, $A\in M_\ast$. Then by $(2)$, we have, $(N:r)=N$. So $rA\leqslant N$ gives $A\leqslant(N:r)=N$.\\
          $(3)\Longrightarrow (1)$. Suppose $(3)$ holds. Let $aQ\leqslant N$ and $Q\nleqslant N$ for $Q\in M, a\in L$.  As $L$ and $M$ are compactly generated, there exist $x'\in L_\ast$ and $Y,Y'\in M_\ast$ such that $x'\leqslant a, Y\leqslant Q, Y'\leqslant Q$ and $Y'\nleqslant N$. Let $x\in L_\ast$ such that $x\leqslant a$. Then $(x\vee x')\in L_\ast$, $(Y\vee Y')\in M_\ast$ such that $(x\vee x')(Y\vee Y')\leqslant aQ\leqslant N$ and $(Y\vee Y')\nleqslant N$. So by $(3)$, $(x\vee x')I_M\leqslant \delta(N)$ which implies $x\leqslant (\delta(N):I_M)$. Thus $a\leqslant (\delta(N):I_M)$ and hence $aI_M\leqslant \delta(N)$. Therefore $N$ is a $\delta$-primary element of $M$.      
          \end{proof}
                 
          \begin{thm}
          Let $M$ be a CG-lattice $L$-module, $N\in M$ be a proper element and $\delta$ be an expansion function on $M$. Then the following statements are equivalent:-
          \begin{enumerate}
          \item $N$ is a $\delta$-primary element of $M$.
          \item for every $A\in M$ such that $A\nleqslant N$, we have $(N:A)\leqslant (\delta(N):I_M)$.
          \item for every $r\in L_\ast$, $A\in M_\ast$ , if $rA\leqslant N$ then either $A\leqslant N$ or $rI_M\leqslant \delta(N)$.
          \end{enumerate} 
           \end{thm}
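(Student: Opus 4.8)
The plan is to exploit the fact that statement $(1)$ here is verbatim statement $(1)$ of the preceding theorem and statement $(3)$ here is verbatim its statement $(3)$. Consequently the implication $(3)\Longrightarrow(1)$ is already available, and the only genuinely new work is to splice statement $(2)$ into the chain. Concretely, I would prove the cycle $(1)\Longrightarrow(2)\Longrightarrow(3)\Longrightarrow(1)$, where the last implication is quoted from the previous theorem and the first two are direct manipulations of the residuals $(N:A)$ and $(\delta(N):I_M)$.

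For $(1)\Longrightarrow(2)$, I would fix $A\in M$ with $A\nleqslant N$ and examine the join $(N:A)=\vee\{x\in L\mid xA\leqslant N\}$. For each $x$ in this indexing set one has $xA\leqslant N$; since $N$ is $\delta$-primary and $A\nleqslant N$, the definition forces $xI_M\leqslant\delta(N)$, that is, $x\leqslant(\delta(N):I_M)$. As every member of the indexing set lies below $(\delta(N):I_M)$, so does their join, giving $(N:A)\leqslant(\delta(N):I_M)$.

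For $(2)\Longrightarrow(3)$, I would take $r\in L_\ast$, $A\in M_\ast$ with $rA\leqslant N$ and assume $A\nleqslant N$. Then $r\leqslant(N:A)$ by definition of the colon, and $(2)$ yields $r\leqslant(N:A)\leqslant(\delta(N):I_M)$. The one point needing care here is the passage from $r\leqslant(\delta(N):I_M)$ back to $rI_M\leqslant\delta(N)$: this rests on the identity $(\delta(N):I_M)I_M\leqslant\delta(N)$, which follows from the join-distributivity axiom of the $L$-module applied to $(\delta(N):I_M)=\vee\{y\in L\mid yI_M\leqslant\delta(N)\}$, since then $(\delta(N):I_M)I_M=\vee\{yI_M\mid yI_M\leqslant\delta(N)\}\leqslant\delta(N)$. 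Multiplying $r\leqslant(\delta(N):I_M)$ by $I_M$ now gives $rI_M\leqslant\delta(N)$, as required, so $(3)$ holds.

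Finally, since statements $(3)$ and $(1)$ coincide with the corresponding statements of the previous theorem, the implication $(3)\Longrightarrow(1)$ is exactly what was established there by the compact-generation argument (lifting an arbitrary product $aQ\leqslant N$ to compact witnesses $x\vee x'\in L_\ast$ and $Y\vee Y'\in M_\ast$). I expect this reused step to be the only substantial obstacle in the whole argument, and it is precisely the one already discharged; the two new implications are routine once the colon identity above is recorded. This closes the cycle and establishes the equivalence of $(1)$, $(2)$ and $(3)$.
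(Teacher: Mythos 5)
Your proposal is correct and follows essentially the same route as the paper: the cycle $(1)\Rightarrow(2)\Rightarrow(3)\Rightarrow(1)$, with the same residual manipulations for the first two implications and the compact-generation argument for the last (which you legitimately import from the preceding theorem, whose statements $(1)$ and $(3)$ are indeed verbatim identical, whereas the paper rewrites that step in contrapositive form). Your variant of $(1)\Rightarrow(2)$, taking the join over the defining index set of $(N:A)$ rather than over compact elements below it, is a harmless and if anything slightly cleaner simplification.
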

          \begin{proof}
          $(1)\Longrightarrow (2)$. Suppose $(1)$ holds. Let $A\in M$ be such that $A\nleqslant N$. Let $a\in L_\ast$ be such that $a\leqslant (N:A)$. Then $aA\leqslant N$. As $N$ is a $\delta$-primary element of $M$ and  $A\nleqslant N$, we have, $aI_M\leqslant \delta(N)$ which implies $a\leqslant (\delta(N):I_M)$  and thus $(N:A)\leqslant (\delta(N):I_M)$.\\
          $(2)\Longrightarrow (3)$. Suppose $(2)$ holds. Let $rA\in N$ and $A\nleqslant N$ for $r\in L_\ast$, $A\in M_\ast$. Then by $(2)$, we have,  $(N:A)\leqslant (\delta(N):I_M)$. So $rA\leqslant N$ gives $r\leqslant (N:A)\leqslant (\delta(N):I_M)$ which implies $rI_M\leqslant \delta(N)$.\\
          $(3)\Longrightarrow (1)$. Suppose $(3)$ holds. Let $aQ\leqslant N$ and $aI_M\nleqslant \delta(N)$ for $Q\in M, a\in L$. Then $a\nleqslant (\delta(N):I_M)$.  As $L$ and $M$ are compactly generated, there exist $x', x\in L_\ast$ and $Y\in M_\ast$ such that $x\leqslant a, x'\leqslant a, Y'\leqslant Q$ and $x'\nleqslant (\delta(N):I_M)$. Let $Y\in M_\ast$ such that $Y\leqslant Q$. Then $(x\vee x')\in L_\ast$, $(Y\vee Y')\in M_\ast$ such that $(x\vee x')(Y\vee Y')\leqslant aQ\leqslant N$ and $(x\vee x')\nleqslant (\delta(N):I_M)$, i.e.$(x\vee x')I_M\nleqslant (\delta(N)$. So by $(3)$, $(Y\vee Y')\leqslant N$ which implies $Y\leqslant N$. Thus $Q\leqslant N$ and hence $N$ is a $\delta$-primary element of $M$.  
          \end{proof} 
         
        The following two results characterize the $\delta_0$-primary and $\delta_1$-primary elements of an $L$-module $M$ and relate them with prime and primary elements of $M$. 
         
          \begin{thm}\label{T-C41}
          Given an expansion function $\delta_0$ on an $L$-module $M$, a proper element $P$ of an $L$-module $M$ is $\delta_0$-primary if and only if it is prime.
          \end{thm}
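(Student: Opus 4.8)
The plan is to prove this purely by unfolding the two definitions and exploiting the fact that the identity expansion $\delta_0$ satisfies $\delta_0(P)=P$. Recall that $P$ being $\delta_0$-primary means: for all $a\in L$ and $A\in M$, $aA\leqslant P$ implies either $A\leqslant P$ or $aI_M\leqslant\delta_0(P)$; while $P$ being prime means: for all $a\in L$ and $A\in M$, $aA\leqslant P$ implies either $A\leqslant P$ or $aI_M\leqslant P$. Since $\delta_0(P)=P$, the condition $aI_M\leqslant\delta_0(P)$ is literally the condition $aI_M\leqslant P$, so the two defining implications coincide verbatim. Thus the whole content of the theorem is this substitution, and no compactly generated or principal hypotheses are needed.

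Concretely, for the forward direction I would assume $P$ is $\delta_0$-primary, fix arbitrary $a\in L$ and $X\in M$ with $aX\leqslant P$, and apply the $\delta_0$-primary hypothesis to obtain $X\leqslant P$ or $aI_M\leqslant\delta_0(P)$. Rewriting the second alternative as $aI_M\leqslant P$ via $\delta_0(P)=P$ gives exactly the prime dichotomy, so $P$ is prime. The converse is entirely symmetric: assuming $P$ prime and $aX\leqslant P$, primeness yields $X\leqslant P$ or $aI_M\leqslant P=\delta_0(P)$, which is precisely the $\delta_0$-primary conclusion. In both directions properness of $P$ is shared by the two notions and requires no separate argument.

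I expect no genuine obstacle here; the equivalence is definitional, and the only thing to be careful about is stating the substitution $\delta_0(P)=P$ explicitly so that the reader sees that the two conditions are the same implication rather than merely implying one another. It is worth remarking that this is the base case illustrating how $\delta$-primary elements unify prime and primary elements of $M$, with $\delta_0$ recovering primeness exactly.
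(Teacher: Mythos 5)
Your argument is correct and matches the paper, which simply declares the proof obvious: substituting $\delta_0(P)=P$ makes the $\delta_0$-primary condition literally identical to the definition of a prime element of $M$, so the equivalence is purely definitional. Nothing further is needed.
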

          \begin{proof}
          The proof is obvious.
           \end{proof}
           
           \begin{thm}\label{T-C4}
           Let $L$ be a PG-lattice and $M$ be a faithful multiplication PG-lattice $L$-module with $I_M$ compact. Then given an expansion function $\delta_1$ on an $L$-module $M$, a proper element $P\in M$ is $\delta_1$-primary if and only if it is primary.
            \end{thm}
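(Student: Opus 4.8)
The plan is to set $p=(P:I_M)$ and first record that $\delta_1(P)=\sqrt{(P:I_M)}\,I_M=\sqrt{p}\,I_M$, together with the basic correspondence $bI_M\leqslant P\iff b\leqslant p$ for $b\in L$, which is immediate from $p=\vee\{x\mid xI_M\leqslant P\}$ and the join distributivity of the multiplication (giving $pI_M\leqslant P$). In particular $b^nI_M\leqslant P\iff b^n\leqslant p$, and for a compact $x\in L_\ast$ one has $x\leqslant\sqrt p\iff x^n\leqslant p$ for some $n$. These reductions recast the whole statement as a comparison between the \emph{radical} condition $aI_M\leqslant\sqrt p\,I_M$ furnished by $\delta_1$-primariness and the \emph{power} condition $a^nI_M\leqslant P$ demanded by primariness, so the real content is moving between $a\leqslant\sqrt p$ and $a^n\leqslant p$.

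For the implication primary $\Rightarrow\delta_1$-primary I would argue directly. Suppose $aA\leqslant P$ with $A\nleqslant P$; primariness yields $n$ with $a^nI_M\leqslant P$, i.e.\ $a^n\leqslant p$. Since $L$ is compactly generated, write $a=\vee\{e\in L_\ast\mid e\leqslant a\}$; each such compact $e$ satisfies $e^n\leqslant a^n\leqslant p$ by monotonicity of the product, hence $e\leqslant\sqrt p$, and taking the join gives $a\leqslant\sqrt p$. Therefore $aI_M\leqslant\sqrt p\,I_M=\delta_1(P)$, so $P$ is $\delta_1$-primary. This direction needs only compact generation of $L$.

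For the converse I would exploit the characterization of $\delta_1$-primariness by compact test elements established in the characterization theorems above, so that it suffices to control compact data. Given $aX\leqslant P$ with $X\nleqslant P$, compact generation of $M$ yields $A\in M_\ast$ with $A\leqslant X$ and $A\nleqslant P$. For a compact $r\in L_\ast$ with $r\leqslant a$ we then have $rA\leqslant P$ and $A\nleqslant P$, so $\delta_1$-primariness gives $rI_M\leqslant\sqrt p\,I_M$. Expanding $\sqrt p\,I_M=\vee\{xI_M\mid x\in L_\ast,\ x^m\leqslant p\ \text{for some}\ m\}$ by join distributivity, I would invoke (or first prove as a sublemma) that $rI_M$ is compact in $M$ whenever $r\in L_\ast$ and $I_M$ is compact, so that $rI_M\leqslant x_1I_M\vee\cdots\vee x_kI_M=xI_M$ for finitely many $x_i$, with $x=\vee x_i$ and $x^N\leqslant p$ for a suitable $N$. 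A short induction, using $rI_M\leqslant xI_M\Rightarrow r^{j+1}I_M=r(r^jI_M)\leqslant x^j(rI_M)\leqslant x^{j}xI_M=x^{j+1}I_M$, then gives $r^NI_M\leqslant x^NI_M\leqslant pI_M\leqslant P$. Thus every compact $r\leqslant a$ is nilpotent modulo $p$, equivalently $a\leqslant\sqrt p$.

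The genuine obstacle is the final step: primariness as defined quantifies over \emph{all} $a\in L$ and demands a \emph{single} exponent $n$ with $a^nI_M\leqslant P$, whereas the compact analysis only delivers $a\leqslant\sqrt p$, i.e.\ a possibly unbounded family of exponents for the compact pieces of $a$. Passing from $a\leqslant\sqrt p$ to $a^n\leqslant p$ is automatic when $a$ is compact but not in general, so this is precisely where the hypotheses that $M$ is faithful, multiplication, principally generated, and that $I_M$ is compact must do real work; the natural attempt is to represent $aI_M$ itself as a join of compact elements of $M$ and use compactness of the target $\sqrt p\,I_M$ to extract a finite subjoin carrying a uniform exponent, or to establish the injectivity-type fact $aI_M\leqslant\sqrt p\,I_M\Rightarrow a\leqslant\sqrt p$ together with a nilpotency bound. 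I expect securing this uniform exponent to be the crux of the argument; once it is in hand, the correspondence $b^nI_M\leqslant P\iff b^n\leqslant p$ closes the proof and exhibits $P$ as primary.
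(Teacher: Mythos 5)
Your forward direction is correct and is essentially the paper's argument (the paper writes ``as $P$ is primary, we have $a\leqslant\sqrt{P:I_M}$'' without the compact decomposition you supply, but that decomposition is exactly the right justification). In the converse, your route to $a\leqslant\sqrt{P:I_M}$ is more roundabout than the paper's: the paper applies Theorem 5 of the Callialp--Tekir reference, a cancellation law valid in a faithful multiplication $L$-module with $I_M$ compact, directly to $aI_M\leqslant(\sqrt{P:I_M})I_M$ to conclude $a\leqslant\sqrt{P:I_M}$ in one line. Your sublemma that $rI_M$ is compact for $r\in L_\ast$ is true under these hypotheses, but its proof already requires that same cancellation law (write the $A_\alpha$ as $b_\alpha I_M$, cancel, and use compactness of $r$ in $L$), so you gain nothing by the detour; you may as well cancel immediately.

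The genuine gap is the one you name yourself and then leave open: you never produce a single exponent $n$ with $a^nI_M\leqslant P$, which is what the paper's literal definition of ``primary'' demands. To close it, note that it suffices to verify the primary condition on compact test elements $r\in L_\ast$, $A\in M_\ast$ (by the same compactness argument as in the implications $(3)\Longrightarrow(1)$ of the characterization theorems: if $aX\leqslant P$ and $X\nleqslant P$, every compact $r\leqslant a$ paired with a suitable compact $A\leqslant X$, $A\nleqslant P$, falls under the compact case). For compact $r$ with $r\leqslant\sqrt{P:I_M}$, compactness gives $r\leqslant x_1\vee\cdots\vee x_k$ with each $x_i\in L_\ast$ and $x_i^{m_i}\leqslant(P:I_M)$, and the multinomial expansion of $(x_1\vee\cdots\vee x_k)^{m_1+\cdots+m_k-k+1}$ (using join distributivity of the product) yields a uniform exponent $n$ with $r^n\leqslant(P:I_M)$, i.e.\ $r^nI_M\leqslant P$. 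For an arbitrary $a\in L$ the uniform exponent genuinely need not exist, so your worry is legitimate; the theorem is to be read with primariness tested on compacts (consistent with how the paper defines primary elements of $L$), and the paper's own proof silently elides exactly this point when it passes from $a\leqslant\sqrt{P:I_M}$ to ``hence $P$ is primary.'' Your writeup identifies the crux correctly but does not resolve it, so as it stands the proof is incomplete.
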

             \begin{proof}
              Assume that a proper element $P\in M$ is primary. Let $aA\leqslant P$ and $A\nleqslant P$ for $a\in L,\ A\in M$. Then as $P$ is primary, we have $a\leqslant \sqrt{P:I_M}$. So $aI_M\leqslant (\sqrt{P:I_M})I_M=\delta_1(P)$ and hence $P$ is a $\delta_1$-primary element of $M$. Conversely, assume that $P$ is a $\delta_1$-primary element of $M$. Let $aA\leqslant P$ and $A\nleqslant P$ for $a\in L,\ A\in M$. Then as $P$ is  $\delta_1$-primary, we have $aI_M\leqslant \delta_1(P)=(\sqrt{P:I_M})I_M$. Since $I_M$ is compact, by Theorem 5 of \cite{CT}, it follows that $a\leqslant \sqrt{P:I_M}$ and hence $P$ is primary.  
              \end{proof}
              
              \begin{c1}\label{C-C01}
             Every primary element of a multiplication $L$-module $M$ is $\delta_1$-primary where $\delta_1$ is an expansion function on $M$.
              \end{c1}              
               \begin{proof}
                     Refer first part of the proof of Theorem \ref{T-C4}.
                     \end{proof}

      Clearly, every $\delta_0$-primary element of a multiplication lattice $L$-module $M$ is $\delta_1$-primary. But the converse is not true which is clear from the following example.  
      
      \begin{e1}\label{E-C1}
      Consider the lattice $L$ of ideals of the ring $R=<Z_{12}\ , \ +\ ,\ \cdot>$. Then the only ideals of $R$ are the principal ideals (0),(1),(2),(3),(4),(6). Clearly, $L=\{(0),(1),(2),(3),(4),(6)\}$
      is a compactly generated multiplicative lattice and $L$ is a module over itself. Also, $L$ is a multiplication lattice module.
      It is easy to see that the element $(4)$ is $\delta_1$-primary but not $\delta_0$-primary.
      \end{e1} 
      
      
      We note that for radical elements of a multiplication lattice $L$-module $M$, every $\delta_1$-primary element is $\delta_0$-primary. Also, in a multiplication lattice $L$-module $M$, every semiprime element is $\delta_0$-primary.
      
      Now we define a semiprimary element in an $L$-module $M$.
      
      \begin{d1}
       A proper element $N\in M$ is said to be {\bf semiprimary} if $\sqrt{N:I_M}$ is a prime element of $L$.
       \end{d1}
      
      According to Proposition 3.6 in \cite{A}, if $N$ is a prime element of an $L$-module $M$ then $(N:I_M)$ is a prime element of $L$. By Theorem 1 in \cite{MK2}, if Q is a primary element of an $L$-module $M$ then $\sqrt{Q:I_M}$ is a prime element of $L$.  By Theorem 2.2 in \cite{MB}, an element $p\in L$ is $\delta_0$-primary if and only if it is prime. In view of these 3 results and Theorem \ref{T-C41} along with Corollary \ref{C-C01}, we have the following corollary which gives the link between the $\delta$-primary elements  of an $L$-module $M$ and the corresponding $\delta_L$-primary elements of $L$. 
      
      \begin{c1}
      If $A$ is a semiprimary element of an $L$-module $M$ then $\sqrt{A:I_M}$ is a $(\delta_0)_L$-primary element of $L$.  If $Q$ is a $p$-prime element of an $L$-module $M$ then $Q\in M$ is $\delta_0$-primary and $p=(Q:I_M)\in L$ is  $(\delta_0)_L$-primary. Further, if $Q$ is a $p$-primary element of a multiplication $L$-module $M$ then $Q\in M$ is $\delta_1$-primary and $p=\sqrt{Q:I_M}\in L$ is  $(\delta_0)_L$-primary. 
      \end{c1}
      \begin{proof}
      The proof is obvious.
      \end{proof}      
          
      The next theorem shows that a prime element of an $L$-module $M$ is $\delta$-primary.        
                    
       \begin{thm}\label{T-C2}
      If $\delta$ and $\gamma$ are expansion functions on an $L$-module $M$ such that $\delta(A)\leqslant\gamma(A)$ for all $A\in M$ then every $\delta$-primary element of $M$ is $\gamma$-primary. In particular, a prime element of $M$ is $\delta$-primary for every expansion function $\delta$ on $M$.
      \end{thm}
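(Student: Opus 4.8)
The plan is to verify the defining implication directly from the hypothesis $\delta(A)\leqslant\gamma(A)$, and then to deduce the special case from Theorem \ref{T-C41}.

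First I would take a $\delta$-primary element $P\in M$; it is proper by definition, so it remains only to check the $\gamma$-primary implication. Let $a\in L$ and $A\in M$ with $aA\leqslant P$. Since $P$ is $\delta$-primary, either $A\leqslant P$ or $aI_M\leqslant\delta(P)$. In the first case there is nothing more to do. In the second case I would invoke the hypothesis at the single element $P$, namely $\delta(P)\leqslant\gamma(P)$, to obtain $aI_M\leqslant\delta(P)\leqslant\gamma(P)$. Thus in either case $A\leqslant P$ or $aI_M\leqslant\gamma(P)$, which is exactly the statement that $P$ is $\gamma$-primary.

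For the particular claim, I would first recall that by Theorem \ref{T-C41} a prime element $P\in M$ is precisely a $\delta_0$-primary element, where $\delta_0$ is the identity expansion function $\delta_0(A)=A$. Now for an arbitrary expansion function $\delta$ on $M$, condition \textcircled{1} in the definition of an expansion function gives $\delta_0(A)=A\leqslant\delta(A)$ for every $A\in M$. Applying the first part of the theorem with $\delta_0$ in place of $\delta$ and $\delta$ in place of $\gamma$, every $\delta_0$-primary element of $M$ is $\delta$-primary. Since a prime element is $\delta_0$-primary, it follows that a prime element of $M$ is $\delta$-primary for every expansion function $\delta$ on $M$.

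Since the whole argument is a chain of one-line deductions from the definitions together with $\delta(P)\leqslant\gamma(P)$, I do not anticipate a genuine obstacle. The only point requiring care is to use the containment hypothesis at the specific element $P$, and to note that it is Theorem \ref{T-C41} (the identification of prime with $\delta_0$-primary) together with the expansiveness axiom \textcircled{1} that does the work in the special case, rather than any fresh computation.
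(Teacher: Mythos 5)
Your proposal is correct and follows essentially the same route as the paper: the first part is the direct one-line verification using $aI_M\leqslant\delta(P)\leqslant\gamma(P)$, and the special case is obtained exactly as in the paper by combining Theorem \ref{T-C41} with the expansiveness axiom $\delta_0(A)=A\leqslant\delta(A)$ and the first part. No gaps.
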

       \begin{proof}
       Let $P\in M$ be $\delta$-primary and let $aA\leqslant P$ for $a\in L,\ A\in M$. Then either $A\leqslant P$ or $aI_M\leqslant \delta(P)\leqslant \gamma(P)$ which implies $P$ is $\gamma$-primary. Next, let $P\in M$ be a prime element. Then by Theorem $\ref{T-C41}$, $P$ is $\delta_0$-primary. For any expansion function $\delta$ on $M$, we have $P\leqslant\delta(P)$ and so $\delta_0(P)\leqslant\delta(P)$. Thus a prime element $P\in M$ is $\delta$-primary for every expansion function $\delta$ on $M$. 
       \end{proof}
       
       Now we show that $\delta_1(P)\leqslant \delta(P)$ for every $\delta$-primary element $P\in M$.
       
       \begin{thm}
       Let $L$ be a PG-lattice and M be a faithful multiplication PG-lattice $L$-module with $I_M$ compact. If $\delta$ is an expansion function on M such that $\delta(A)\leqslant\delta_1(A)$ for all $A\in M$ then for any $\delta$-primary element $P\in M$, $\delta(P)=\delta_1(P)$.
       \end{thm}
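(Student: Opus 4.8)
The plan is to exploit the given pointwise inequality $\delta(A)\leqslant\delta_1(A)$ in one direction and prove the reverse only for the $\delta$-primary element $P$. Since $\delta\leqslant\delta_1$ holds at every element, we immediately have $\delta(P)\leqslant\delta_1(P)$, so the entire content of the theorem is the reverse inequality $\delta_1(P)\leqslant\delta(P)$, that is, $(\sqrt{P:I_M})I_M\leqslant\delta(P)$.

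First I would unfold the radical by its definition, $\sqrt{P:I_M}=\vee\{x\in L_\ast\mid x^n\leqslant (P:I_M)\text{ for some }n\in Z_+\}$, and use module property \textcircled{1} to distribute the multiplication by $I_M$ across the join, obtaining $\delta_1(P)=(\sqrt{P:I_M})I_M=\vee\{xI_M\mid x\in L_\ast,\ x^n\leqslant (P:I_M)\}$. It then suffices to show that each joinand lies below $\delta(P)$; equivalently, for every compact $x\in L_\ast$ with $x^nI_M\leqslant P$ (i.e. $x^n\leqslant (P:I_M)$) one has $xI_M\leqslant\delta(P)$.

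The core step is an induction on $n$. For the base case $n=1$ we have $xI_M\leqslant P\leqslant\delta(P)$ by expansion property \textcircled{1}. For the inductive step I would rewrite $x^nI_M=x(x^{n-1}I_M)$ via module axiom \textcircled{3}, so that $x(x^{n-1}I_M)\leqslant P$; since $P$ is $\delta$-primary, either $x^{n-1}I_M\leqslant P$, whence the induction hypothesis yields $xI_M\leqslant\delta(P)$, or else $xI_M\leqslant\delta(P)$ directly. In either case $xI_M\leqslant\delta(P)$, which completes the induction. Taking the join over all such $x$ gives $\delta_1(P)\leqslant\delta(P)$, and combined with $\delta(P)\leqslant\delta_1(P)$ we conclude $\delta(P)=\delta_1(P)$.

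I expect the only delicate point to be the interchange of the multiplication by $I_M$ with the (possibly infinite) join defining the radical, which is exactly module property \textcircled{1}; the remainder is a clean telescoping application of the $\delta$-primary condition together with $P\leqslant\delta(P)$. The standing hypotheses (PG-lattice, faithful multiplication module, $I_M$ compact) serve to make $\delta_1$ well defined and to keep the ambient results available, but the argument itself draws only on the multiplication-module structure and on the expansion and $\delta$-primary properties.
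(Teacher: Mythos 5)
Your proof is correct, and its heart --- applying the $\delta$-primary condition to $x\cdot(x^{k-1}I_M)\leqslant P$ and descending on the exponent until either $xI_M\leqslant\delta(P)$ or $xI_M\leqslant P\leqslant\delta(P)$ --- is exactly the paper's key step, phrased as an induction rather than as a choice of least exponent. Where you differ is in the reduction to that step. The paper takes an arbitrary $A\leqslant\delta_1(P)$, writes $A=aI_M$ using the multiplication-module hypothesis, and invokes compactness of $I_M$ (Theorem 5 of \cite{CT}) to cancel $I_M$ and obtain $a\leqslant\sqrt{P:I_M}$, then asserts the existence of a least $k$ with $a^kI_M\leqslant P$. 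You instead expand $\sqrt{P:I_M}$ as a join of compact elements and distribute $I_M$ across that join via module axiom \textcircled{1}, so each joinand $xI_M$ comes equipped with $x^n\leqslant(P:I_M)$ by definition. This buys you two things: you never need the representation $A=aI_M$ or the cancellation result for this inequality, and you sidestep a subtle point in the paper's argument, namely that $a\leqslant\sqrt{P:I_M}$ only guarantees $a^k\leqslant(P:I_M)$ for some $k$ when $a$ is compact (or after reducing to compact elements, as you do). The standing hypotheses remain necessary only to make $\delta_1$ well defined, as you observe. Both routes are valid; yours is the more economical of the two.
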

       \begin{proof}
       Assume that a proper element $P\in M$ is $\delta$-primary. For $A\in M$, let $A\leqslant\delta_1(P)=(\sqrt{P:I_M})I_M$. Since $M$ is a multiplication lattice $L$-module, $A=aI_M$ for some $a\in L$ and so $aI_M\leqslant(\sqrt{P:I_M})I_M$. Then since $I_M$ is compact, by Theorem 5 of \cite{CT}, we have $a\leqslant\sqrt{P:I_M}$. It follows that there exists a least positive integer $k$ such that $a^{k}I_M\leqslant P$. If $k=1$ then $A=aI_M\leqslant P\leqslant \delta(P)$. So let $k>1$. Clearly, $a^{k-1}aI_M\leqslant P$ and $a^{k-1}I_M\nleqslant P$. Then as $P\in M$ is $\delta$-primary, we have $A=aI_M\leqslant\delta(P)$. Thus in any case, we have $A\leqslant \delta(P)$ and hence $\delta_1(P)\leqslant\delta(P)$. But by hypothesis, $\delta(P)\leqslant\delta_1(P)$ and therefore $\delta(P)=\delta_1(P)$. 
       \end{proof}
       
       The following theorem is an interesting property of a $\delta$-primary element of $M$.
       
       \begin{thm}\label{T-C90}
       Let $\delta$ be an expansion function on an $L$-module $M$ and $P\in M$ be $\delta$-primary. Then
       
        \textcircled{1} $(P:a)=P$ for all $a\in L$ such that $aI_M\nleqslant\delta(P)$.
        
        \textcircled{2} $(P:q)$ is a $\delta$-primary element of $M$ for all $q\in L$.
       \end{thm}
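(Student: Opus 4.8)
The plan is to reduce both parts to a single application of the $\delta$-primary hypothesis on $P$, after first recording two elementary facts about residuation. For any $q\in L$ I would note that $P\leqslant (P:q)$ and $q(P:q)\leqslant P$. The inclusion $P\leqslant(P:q)$ holds because $q\leqslant 1$ forces $qP\leqslant 1P=P$, so $P$ belongs to the set $\{X\in M\mid qX\leqslant P\}$ whose join is $(P:q)$. The inequality $q(P:q)\leqslant P$ follows from module axiom \textcircled{2}: writing $(P:q)=\vee\{X\mid qX\leqslant P\}$ and distributing $q$ over this join gives $q(P:q)=\vee\{qX\mid qX\leqslant P\}\leqslant P$.

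For the first statement, take $a\in L$ with $aI_M\nleqslant\delta(P)$. By the facts above, $a(P:a)\leqslant P$, so applying the definition of $\delta$-primary to the product $a\cdot(P:a)\leqslant P$ yields either $(P:a)\leqslant P$ or $aI_M\leqslant\delta(P)$. The latter is excluded by hypothesis, so $(P:a)\leqslant P$, and combined with $P\leqslant(P:a)$ this gives $(P:a)=P$.

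For the second statement, fix $q\in L$ and set $N=(P:q)$. I would first observe that $N$ is proper precisely when $qI_M\nleqslant P$, since $N=I_M$ if and only if $qI_M\leqslant P$; the claim is therefore to be read under the hypothesis $qI_M\nleqslant P$, and this is the one point needing care, as for $q=0$ the element $(P:0)=I_M$ is not proper and the assertion is vacuous. Assuming $N$ proper, the key step is that $P\leqslant N$ gives $\delta(P)\leqslant\delta(N)$ by monotonicity of $\delta$. Now suppose $aA\leqslant N$ for $a\in L$, $A\in M$. Then $q(aA)\leqslant qN\leqslant P$, and commutativity together with associativity rewrite this as $a(qA)\leqslant P$. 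Since $P$ is $\delta$-primary, either $qA\leqslant P$, whence $A\leqslant(P:q)=N$, or $aI_M\leqslant\delta(P)\leqslant\delta(N)$. In either case the $\delta$-primary condition for $N$ is verified, so $N=(P:q)$ is $\delta$-primary.

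The arguments are short, and the engine of both parts is the pair of residuation identities $P\leqslant(P:q)$ and $q(P:q)\leqslant P$ combined with the monotonicity of $\delta$, which is exactly what upgrades the conclusion $aI_M\leqslant\delta(P)$ to $aI_M\leqslant\delta((P:q))$. The only genuinely delicate point is the degenerate case in the second part, where $(P:q)$ fails to be proper and the statement must be interpreted with the tacit restriction $qI_M\nleqslant P$.
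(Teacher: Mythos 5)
Your proof is correct and follows essentially the same route as the paper: part \textcircled{1} applies the $\delta$-primary condition to $a(P:a)\leqslant P$, and part \textcircled{2} rewrites $aA\leqslant (P:q)$ as $a(qA)\leqslant P$ and uses $\delta(P)\leqslant\delta((P:q))$ via monotonicity. Your remark that $(P:q)$ fails to be proper when $qI_M\leqslant P$ is a legitimate point of care that the paper's own proof silently ignores.
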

       \begin{proof}
       \textcircled{1}. As $P\in M$ is $\delta$-primary, $a(P:a)\leqslant P$ and $aI_M\nleqslant\delta(P)$, we have $(P:a)\leqslant P$. But $P\leqslant (P:a)$ and hence $(P:a)=P$.
       
       \textcircled{2}. Clearly, $qP\leqslant P$ implies $P\leqslant (P:q)$ and so $\delta(P)\leqslant\delta(P:q)$. Now let $aA\leqslant (P:q)$ but $A\nleqslant (P:q)$ for $a\in L,\ A\in M$. Then $a(qA)\leqslant P$ and $qA\nleqslant P$. As $P$ is $\delta$-primary, we have $aI_M\leqslant \delta(P)\leqslant\delta(P:q)$ and hence $(P:q)$ is a $\delta$-primary element of $M$. 
       \end{proof}
       
       We conclude this section by the following result which shows that given a chain of $\delta$-primary elements of an $L$-mdoule $M$, their supremum is also $\delta$-primary.
       
       \begin{thm}\label{T-C91}
       Let $\{P_i\mid i\in \triangle\}$ be a chain of $\delta$-primary elements of an $L$-module $M$ with $I_M$ compact where $\delta$ is an expansion function on an $L$-module $M$. Then the element $P=\underset{i\in \triangle}{\vee}P_i$ is also a $\delta$-primary element of $M$.
       \end{thm}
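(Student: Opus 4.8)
The plan is to verify the two requirements for $P=\underset{i\in\triangle}{\vee}P_i$ separately: that it is proper, and that it satisfies the $\delta$-primary condition. For the second part I would not argue from the raw definition but instead invoke the compact-element characterization established above (statement (3) of the characterization theorems), since it reduces the test to compact $r\in L_\ast$ and compact $A\in M_\ast$, which is exactly the setting in which a chain can be exploited.

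First I would show $P<I_M$. Suppose instead $P=I_M$. Then $I_M\leqslant\underset{i\in\triangle}{\vee}P_i$, and since $I_M$ is compact there are finitely many indices $i_1,\dots,i_n$ with $I_M\leqslant P_{i_1}\vee\cdots\vee P_{i_n}$. Because $\{P_i\mid i\in\triangle\}$ is a chain, this finite join equals its largest member, say $P_{i_k}$, so $I_M\leqslant P_{i_k}$, contradicting $P_{i_k}<I_M$. Hence $P$ is proper; this is the one place where the hypothesis ``$I_M$ compact'' is genuinely needed.

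Next I would check the $\delta$-primary condition on compacts. Let $r\in L_\ast$, $A\in M_\ast$ with $rA\leqslant P$ and $A\nleqslant P$. Since each $P_i\leqslant P$, the assumption $A\nleqslant P$ forces $A\nleqslant P_i$ for every $i$. The decisive step is to localise the inclusion $rA\leqslant\underset{i\in\triangle}{\vee}P_i$ to a single member of the chain: as $rA$ is a product of compact elements it is again compact, and the chain is a directed family, so $rA\leqslant P_j$ for some $j\in\triangle$. Now $P_j$ is $\delta$-primary with $A\nleqslant P_j$, so statement (3) applied to $P_j$ gives $rI_M\leqslant\delta(P_j)$; monotonicity of $\delta$ together with $P_j\leqslant P$ yields $\delta(P_j)\leqslant\delta(P)$, whence $rI_M\leqslant\delta(P)$. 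By the characterization, $P$ is then $\delta$-primary. (Equivalently one can route the argument through Theorem \ref{T-C90}: assuming $rI_M\nleqslant\delta(P)$ one obtains $(P_i:r)=P_i$ for all $i$, and $rA\leqslant P_j$ then forces $A\leqslant(P_j:r)=P_j\leqslant P$, a contradiction.)

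The main obstacle is precisely this localisation step. Passing from $rA\leqslant\underset{i}{\vee}P_i$ to $rA\leqslant P_j$ for a single $j$ fails for non-compact elements, so the reduction to compact $r$ and $A$ is essential, and with it the fact that the product $rA$ of two compact elements is again compact — the module-level analogue of the standing hypothesis on $L$ that finite products of compact elements are compact. I would therefore carry out the verification inside a CG-lattice $L$-module (as in the characterization theorems) and flag that the compactness of such products is what makes the directedness argument go through; the expansion function $\delta$ itself enters only through its monotonicity.
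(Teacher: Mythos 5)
Your proof is correct and its core is the same as the paper's: localise $aA\leqslant\underset{i}{\vee}P_i$ to a single $P_j$, apply $\delta$-primariness of $P_j$, and finish with monotonicity of $\delta$ via $P_j\leqslant P$; the properness argument from compactness of $I_M$ also matches the paper's (which only states the conclusion). The difference lies in how the localisation is justified. The paper argues directly from the definition with arbitrary $a\in L$, $A\in M$ and simply asserts ``$aA\leqslant P_i$ for some $i\in\triangle$'' --- a step that, as you rightly observe, is not valid for non-compact elements, so the printed proof has a genuine gap there. You repair it by routing the argument through statement (3) of the characterization theorems, which confines the test to $r\in L_\ast$, $A\in M_\ast$ and lets the directedness of the chain do the work. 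The price is two hypotheses absent from the theorem as stated: that $M$ is a CG-lattice $L$-module (needed for the implication $(3)\Rightarrow(1)$), and that $rA$ is compact in $M$ whenever $r\in L_\ast$ and $A\in M_\ast$ --- the paper's standing assumption about products of compact elements concerns $L$ only, so this really is an extra condition on the module. You flag both, which is exactly right: your version is the rigorous one but proves a slightly more restrictive statement, while the paper's version claims the general statement via an unjustified localisation.
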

       \begin{proof}
       Since $I_M$ is compact, $\underset{i\in \triangle}{\vee}P_i=P\neq I_M$. Let $aA\leqslant P$ and $A\nleqslant P$ for $a\in L,\ A\in M$. Then $aA\leqslant P_i$ for some $i\in \triangle$ but $A\nleqslant P_i$. As each $P_i$ is $\delta$-primary, we get $aI_M\leqslant \delta(P_i)$. Since $P_i\leqslant P$, we have $\delta( P_i)\leqslant\delta(P)$ and so $aI_M\leqslant\delta(P)$. Hence $P$ is a $\delta$-primary element of $M$.
       \end{proof}
       
       In the next section, we deal with the infimum of $\delta$-primary elements of an $L$-module $M$. 
          
       \section{Special properties of $\delta_0$, $\delta_1$ and $\delta_0$-primary, $\delta_1$-primary elements of $M$}
       
       Now we define expansion function on an $L$-module $M$ with a special property namely; the meet preserving property.
       \begin{d1}
       An expansion function $\delta$ on an $L$-module $M$ is said to have {\bf meet preserving property} if $\delta(A\wedge B)=\delta(A)\wedge \delta(B)$ for all $A,\ B\in M$. 
       \end{d1}
       
        The following result which shows that given a finite number of $\delta$-primary elements of an $L$-module $M$ then their meet (infimum) is also $\delta$-primary if $\delta$ has the meet preserving property.
              
              \begin{thm}\label{T-C92}
              Let the expansion function $\delta$ on an $L$-module $M$ have the meet preserving property. If $Q_1,\ Q_2,\cdots,\ Q_n$ are $\delta$-primary elements of $M$ and $P=\delta(Q_i)$ for all $i=1,\ 2,\cdots,\ n$ then the element $Q=\bigwedge_{i=1}^{n}Q_i$ is also a $\delta$-primary element of $M$.
              \end{thm}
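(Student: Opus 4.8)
The plan is to reduce the statement to a clean computation of $\delta(Q)$ and then apply the definition of $\delta$-primary directly. First I would note that $Q=\bigwedge_{i=1}^{n}Q_i$ is proper: each $Q_i$, being $\delta$-primary, satisfies $Q_i<I_M$, and hence $Q\leqslant Q_1<I_M$. Next I would extend the meet preserving property $\delta(A\wedge B)=\delta(A)\wedge\delta(B)$ to finitely many elements by an easy induction on $n$, obtaining $\delta(Q)=\bigwedge_{i=1}^{n}\delta(Q_i)$. Invoking the hypothesis $\delta(Q_i)=P$ for every $i$, this collapses to $\delta(Q)=P$.

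The core step is then to verify the defining implication of $\delta$-primariness for $Q$. Suppose $aA\leqslant Q$ for some $a\in L$, $A\in M$, and assume $A\nleqslant Q$. Since $Q=\bigwedge_{i=1}^{n}Q_i$, there must exist an index $j$ with $A\nleqslant Q_j$. From $aA\leqslant Q\leqslant Q_j$, together with the facts that $Q_j$ is $\delta$-primary and $A\nleqslant Q_j$, I obtain $aI_M\leqslant\delta(Q_j)$. But $\delta(Q_j)=P=\delta(Q)$ by the first paragraph, so $aI_M\leqslant\delta(Q)$, which is exactly the condition required for $Q$ to be $\delta$-primary.

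The argument is short, and the only place where care is genuinely needed — and the step I expect to be the real crux — is the interplay between the meet preserving property and the hypothesis $\delta(Q_i)=P$ for all $i$. Without this common-value assumption, the meet preserving property alone only yields $\delta(Q)=\bigwedge_{i}\delta(Q_i)\leqslant\delta(Q_j)$, so the inequality $aI_M\leqslant\delta(Q_j)$ extracted from the single index $j$ could fail to descend to $aI_M\leqslant\delta(Q)$. It is precisely the equality $\delta(Q_j)=\delta(Q)$ that transports the local information at index $j$ to the global element $Q$, and I would make sure to flag this as the essential role played by the hypothesis $P=\delta(Q_i)$.
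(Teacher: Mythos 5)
Your proof is correct and follows essentially the same route as the paper's: locate an index $j$ with $A\nleqslant Q_j$, apply $\delta$-primariness of $Q_j$, and use the meet preserving property together with $P=\delta(Q_i)$ to identify $\delta(Q_j)$ with $\delta(Q)$. Your added remarks on properness of $Q$ and on why the common-value hypothesis is indispensable are accurate but do not change the argument.
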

              \begin{proof}
              Clearly, $\bigwedge_{i=1}^{n}Q_i=Q\neq I_M$. Let $aA\leqslant Q$ and $A\nleqslant Q$ for $a\in L,\ A\in M$. Then $A\nleqslant Q_k$ for some $k\in \{1,\ 2,\cdots,\ n\}$ but $aA\leqslant Q_k$. As each $Q_k$ is a $\delta$-primary element of $M$, we get $aI_M\leqslant \delta(Q_k)$. Now the meet preserving property of $\delta$ on $M$ gives $\delta(Q)=\delta(\bigwedge_{i=1}^{n}Q_i)=\bigwedge_{i=1}^{n}\delta(Q_i)=P=\delta(Q_k)$ and so $aI_M\leqslant\delta(Q)$. Hence $Q$ is a $\delta$-primary element of $M$.
              \end{proof}
       
       Clearly, the expansion function $\delta_0$ on an $L$-module $M$ has the meet preserving property. To show that the expansion function $\delta_1$ on an $L$-module $M$ has the meet preserving property, we need the following lemma.
       
       \begin{l1}\label{L-C91}
       Let $L$ be a PG-lattice and $M$ be a faithful multiplication PG-lattice $L$-module. Then  $\underset{\alpha\in \triangle}{\wedge}(a_\alpha I_M)=({\underset{\alpha\in \triangle}{\wedge}a_\alpha}) I_M$ where $\{a_\alpha \in L \mid \alpha\in \triangle \}$.
       \end{l1}
       \begin{proof}
       Clearly, $({\underset{\alpha\in \triangle}{\wedge}a_\alpha}) I_M\leqslant \underset{\alpha\in \triangle}{\wedge}(a_\alpha I_M)$. Let $X\leqslant \underset{\alpha\in \triangle}{\wedge}(a_\alpha I_M)$ where $X\in M$. We may suppose that $X$ is a principal element. Assume that $(({\underset{\alpha\in \triangle}{\wedge}a_\alpha}) I_M:X)\neq 1$. Then there exists a maximal element $q\in L$ such that $(({\underset{\alpha\in \triangle}{\wedge}a_\alpha}) I_M:X)\leqslant q$. As $M$ is a multiplication lattice $L$-module and $q\in L$ is maximal, by Theorem 4 of \cite{CT}, two cases arise: 
       
       Case\textcircled{1}. For principal element $X\in M$, there exists a principal element $r\in L$ with $r\nleqslant q$ such that $r X=O_M$. Then $r\leqslant (O_M:X)\leqslant (({\underset{\alpha\in \triangle}{\wedge}a_\alpha}) I_M:X)\leqslant q$ which is a contradiction. 
       
       Case\textcircled{2}. There exists a principal element $Y\in M$ and a principal element $b\in L$ with $b\nleqslant q$ such that $bI_M\leqslant Y$. Then $bX\leqslant Y$, $bX\leqslant b[\underset{\alpha\in \triangle}{\wedge}(a_\alpha I_M)]\leqslant \underset{\alpha\in \triangle}{\wedge}(a_\alpha b I_M)\leqslant \underset{\alpha\in \triangle}{\wedge}(a_\alpha Y)$ and $(O_M:Y)bI_M\leqslant (O_M:Y)Y=O_M$, since $Y$ is meet principal. As $M$ is faithful, it follows that $b(O_M:Y)=0$. Since $Y$ is meet principal, $(bX:Y)Y=bX\wedge Y=bX$. Let $s=(bX:Y)$ then $sY=bX\leqslant \underset{\alpha\in \triangle}{\wedge}(a_\alpha Y)$. So $s=(bX:Y)=(sY:Y)\leqslant [\underset{\alpha\in \triangle}{\wedge}(a_\alpha Y):Y]=\underset{\alpha\in \triangle}{\wedge}(a_\alpha Y:Y)=\underset{\alpha\in \triangle}{\wedge}[a_\alpha \vee (O_M:Y)]$, since $Y$ is join principal. Therefore $bs\leqslant b[\underset{\alpha\in \triangle}{\wedge}[a_\alpha \vee (O_M:Y)]]\leqslant \underset{\alpha\in \triangle}{\wedge}[b[a_\alpha \vee (O_M:Y)]]=\underset{\alpha\in \triangle}{\wedge}[(ba_\alpha) \vee b(O_M:Y)]=\underset{\alpha\in \triangle}{\wedge}(ba_\alpha)\leqslant b\wedge (\underset{\alpha\in \triangle}{\wedge}a_\alpha)\leqslant (\underset{\alpha\in \triangle}{\wedge}a_\alpha)$ and so $b^2X=b(bX)=bsY\leqslant (\underset{\alpha\in \triangle}{\wedge}a_\alpha)Y\leqslant (\underset{\alpha\in \triangle}{\wedge}a_\alpha)I_M$. Hence $b^2\leqslant (({\underset{\alpha\in \triangle}{\wedge}a_\alpha}) I_M:X)\leqslant q$ which implies $b\leqslant \sqrt{q}=q$, a contradiction. 
       
       Thus the assumption that $(({\underset{\alpha\in \triangle}{\wedge}a_\alpha}) I_M:X)\neq 1$ is absurd and so we must have $(({\underset{\alpha\in \triangle}{\wedge}a_\alpha}) I_M:X)=1$ which implies $X\leqslant ({\underset{\alpha\in \triangle}{\wedge}a_\alpha})I_M$. It follows that  $\underset{\alpha\in \triangle}{\wedge}(a_\alpha I_M)\leqslant ({\underset{\alpha\in \triangle}{\wedge}a_\alpha}) I_M$ and hence $\underset{\alpha\in \triangle}{\wedge}(a_\alpha I_M)=({\underset{\alpha\in \triangle}{\wedge}a_\alpha}) I_M$.
       \end{proof}
         
       \begin{thm}
       Let $L$ be a PG-lattice and $M$ be a faithful multiplication PG-lattice $L$-module. The expansion function $\delta_1$ on an $L$-module $M$ has the meet preserving property.
       \end{thm}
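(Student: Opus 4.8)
The plan is to reduce the statement to Lemma \ref{L-C91} together with two routine facts: that the colon distributes over finite meets and that the radical in $L$ distributes over finite meets. Since $\delta_1(A)=(\sqrt{A:I_M})I_M$ by definition, the goal is to prove
\[
\bigl(\sqrt{(A\wedge B):I_M}\bigr)I_M=\bigl(\sqrt{A:I_M}\bigr)I_M\wedge\bigl(\sqrt{B:I_M}\bigr)I_M
\]
for all $A,B\in M$, and I would build up to this in three short steps.

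First I would establish the colon identity $((A\wedge B):I_M)=(A:I_M)\wedge(B:I_M)$. The inequality ``$\leqslant$'' is immediate from monotonicity of the colon in its numerator, since $A\wedge B\leqslant A$ and $A\wedge B\leqslant B$. For ``$\geqslant$'', if $x\leqslant(A:I_M)\wedge(B:I_M)$ then $xI_M\leqslant(A:I_M)I_M\leqslant A$ and likewise $xI_M\leqslant B$, so $xI_M\leqslant A\wedge B$ and hence $x\leqslant((A\wedge B):I_M)$. Next I would use the radical identity $\sqrt{a\wedge b}=\sqrt a\wedge\sqrt b$ in $L$; the nontrivial inclusion $\sqrt a\wedge\sqrt b\leqslant\sqrt{a\wedge b}$ is checked on compact elements, which suffices as $L$ is compactly generated: for compact $x\leqslant\sqrt a\wedge\sqrt b$ one gets $x^n\leqslant a$ and $x^m\leqslant b$ for some $n,m$ (using compactness of $x$), whence $x^{\max(n,m)}\leqslant a\wedge b$ and $x\leqslant\sqrt{a\wedge b}$. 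Combining these two steps yields $\sqrt{(A\wedge B):I_M}=\sqrt{A:I_M}\wedge\sqrt{B:I_M}$.

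Finally I would multiply through by $I_M$ and invoke Lemma \ref{L-C91} applied to the two-element family $\{\sqrt{A:I_M},\,\sqrt{B:I_M}\}$, giving
\[
\bigl(\sqrt{A:I_M}\wedge\sqrt{B:I_M}\bigr)I_M=\bigl(\sqrt{A:I_M}\bigr)I_M\wedge\bigl(\sqrt{B:I_M}\bigr)I_M.
\]
Chaining the three displays then produces $\delta_1(A\wedge B)=\delta_1(A)\wedge\delta_1(B)$, which is the meet preserving property.

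The genuinely module-theoretic step, and the one I expect to be the main obstacle, is the last one: the inequality $(\sqrt{A:I_M})I_M\wedge(\sqrt{B:I_M})I_M\leqslant(\sqrt{A:I_M}\wedge\sqrt{B:I_M})I_M$ does \emph{not} follow from the module axioms, where multiplication only guarantees the reverse inequality via order-preservation. It is precisely here that Lemma \ref{L-C91} — and with it the hypotheses that $L$ is a PG-lattice and $M$ is a faithful multiplication PG-lattice — carries the argument. The colon and radical identities, by contrast, are standard lattice facts and require no new machinery.
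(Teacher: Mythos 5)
Your proposal is correct and follows essentially the same route as the paper: distribute the colon over the meet, distribute the radical over the meet, and then invoke Lemma \ref{L-C91} to pull the meet outside the multiplication by $I_M$. The only difference is that the paper cites property (2.17) of \cite{J} and property (p4) of \cite{TM} for the two routine identities that you prove in-line, and your identification of Lemma \ref{L-C91} as the genuinely non-formal step matches the paper's emphasis.
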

       \begin{proof}
       Let $A,\ B\in M$. Now using property (2.17) of residuation in \cite{J}, we have  $\delta_1(A\wedge B)= (\sqrt{(A\wedge B):I_M})I_M=(\sqrt{(A:I_M)\wedge (B:I_M)})I_M$. But by property (p4) of radicals in \cite{TM}, we have $(\sqrt{(A:I_M)\wedge (B:I_M)})I_M=(\sqrt{A:I_M}\wedge \sqrt{B:I_M})I_M$.   Hence by Lemma $\ref{L-C91}$, $\delta_1(A\wedge B)=(\sqrt{A:I_M})I_M\wedge (\sqrt{B:I_M})I_M=\delta_1(A)\wedge \delta_1(B)$. 
       \end{proof}
       
       Now we define a meet prime element in an $L$-module $M$.
       
       \begin{d1}
       A proper element $N\in M$ is said to be {\bf meet prime} if for all $A,\ B\in M$, $A\wedge B\leqslant N$ implies either $A\leqslant N$ or $B\leqslant N$.
       \end{d1}
       
       To show that the expansion function $\delta_2$ on an $L$-module $M$ has the meet preserving property, we need the following lemma.
       
       \begin{l1}\label{L-C92}
       Let $L$-module $M$ be a multiplication module. Then every maximal element in $M$ is meet prime.
       \end{l1}
       \begin{proof}
       Let $H\in M$ be a maximal element and let $A\wedge B\leqslant H$  for $A, \ B\in M$. Since $M$ is a multiplication lattice module, we have $A=aI_M$ and $B=bI_M$ for some $a,\ b\in L$. Then $abI_M\leqslant aI_M \wedge bI_M \leqslant H$ where $H$ being maximal is prime, by Corollary 3.1 of \cite{A}. But then by Lemma 3.1 of \cite{A}, $H$ is semiprime which implies either $aI_M=A\leqslant H$ or $bI_M=B\leqslant H$ and hence $H$ is a meet prime element of $M$.
       \end{proof}
       
       \begin{thm}
       Let $M$ be a multiplication $L$-module. The expansion function $\delta_2$ on an $L$-module $M$ has the meet preserving property.
       \end{thm}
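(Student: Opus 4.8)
The plan is to establish the two inequalities $\delta_2(A\wedge B)\leqslant\delta_2(A)\wedge\delta_2(B)$ and $\delta_2(A)\wedge\delta_2(B)\leqslant\delta_2(A\wedge B)$ for all $A,B\in M$; the first is immediate from the definition of an expansion function, while the second is where Lemma \ref{L-C92} does all the work. First I would dispose of the trivial cases: if $A=I_M$ then $A\wedge B=B$ and $\delta_2(A)=I_M$, so both sides collapse to $\delta_2(B)$, and symmetrically if $B=I_M$. Hence I may assume $A$ and $B$ are both proper, so that $A\wedge B$ is proper and the defining formula $\delta_2(X)=\wedge\{H\in M\mid X\leqslant H,\ H\text{ maximal}\}$ applies uniformly to $A$, $B$ and $A\wedge B$.

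For the first inequality I would simply invoke the monotonicity condition in the definition of an expansion function: from $A\wedge B\leqslant A$ and $A\wedge B\leqslant B$ we obtain $\delta_2(A\wedge B)\leqslant\delta_2(A)$ and $\delta_2(A\wedge B)\leqslant\delta_2(B)$, whence $\delta_2(A\wedge B)\leqslant\delta_2(A)\wedge\delta_2(B)$. This direction needs nothing beyond the fact that $\delta_2$ is an expansion function and holds without any restriction.

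The substance lies in the reverse inequality, and the key observation is that every maximal element lying above $A\wedge B$ already lies above $A$ or above $B$. Concretely, I would fix an arbitrary maximal element $H\in M$ with $A\wedge B\leqslant H$ and apply Lemma \ref{L-C92}: since $H$ is maximal in the multiplication module $M$, it is meet prime, so $A\wedge B\leqslant H$ forces $A\leqslant H$ or $B\leqslant H$. In the former case $H$ is one of the maximal elements whose meet defines $\delta_2(A)$, so $\delta_2(A)\leqslant H$; in the latter case $\delta_2(B)\leqslant H$. Either way $\delta_2(A)\wedge\delta_2(B)\leqslant H$. As $H$ ranges over all maximal elements above $A\wedge B$, and $\delta_2(A\wedge B)$ is by definition the meet of precisely these $H$, taking the meet yields $\delta_2(A)\wedge\delta_2(B)\leqslant\delta_2(A\wedge B)$, which together with the first inequality gives the claimed equality.

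Because Lemma \ref{L-C92} supplies exactly the meet-primeness of maximal elements on which the argument hinges, there is no genuine obstacle left in the main line of reasoning; the only points deserving care are the boundary situations. These are the case where $A$ or $B$ equals $I_M$ (handled at the outset) and the degenerate possibility that no maximal element lies above $A\wedge B$, in which case one reads the empty meet as $I_M$ and checks, via the same meet-primeness, that then no maximal element lies above $A$ or above $B$ either, so that both sides again equal $I_M$. With these conventions fixed, the equality $\delta_2(A\wedge B)=\delta_2(A)\wedge\delta_2(B)$ holds for all $A,B\in M$, and hence $\delta_2$ has the meet preserving property.
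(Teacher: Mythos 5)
Your proposal is correct and follows essentially the same route as the paper: the paper shows that the set of maximal elements above $A\wedge B$ coincides with the set of maximal elements above $A$ or above $B$ (the nontrivial inclusion being exactly your application of Lemma \ref{L-C92}), and then equates the two meets, which is just your pair of inequalities packaged as a set equality. Your explicit treatment of the boundary cases ($A$ or $B$ equal to $I_M$, and the empty meet) is more careful than the paper's, but does not change the argument.
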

       \begin{proof}
       Let $A,\ B\in M$. Let $H_1$=$\{$$H\in M \mid (A\wedge B) \leqslant H,\ H \ is$  \ a maximal element$\}$ and $H_2$=$\{$$H\in M \mid A\leqslant H \ or \ B\leqslant H,\ H \ is$  \ a maximal element$\}$ where $A,\ B\in M $ are proper. Then $\wedge H_1=\delta_2(A\wedge B)$. Now $\delta_2(A)\wedge \delta_2(B)=(\wedge\{$$H\in M \mid A\leqslant H,\ H \ is$  \ a maximal element$\}$)$\wedge$ ($\wedge\{$$H\in M \mid B\leqslant H,\ H \ is$  \ a maximal element$\}$)=$\wedge$$\{$$H\in M \mid A\leqslant H \ or \ B\leqslant H,\ H \ is$  \ a maximal element$\}$=$\wedge H_2$. If $H\in H_2$ then either $A\leqslant H$ or $B\leqslant H$. So $A\wedge B\leqslant A\leqslant H$ or $A\wedge B\leqslant B\leqslant H$. Thus $H\in H_1$ which implies $H_2\subseteq H_1$. If $H\in H_1$ then $A\wedge B\leqslant H$.  This implies that either $A\leqslant H$ or $B\leqslant H$, because by Lemma $\ref{L-C92}$,  $H$ being a maximal element of $M$,  $H$ is a meet prime element of $M$. Thus $H\in H_2$ which implies $H_1\subseteq H_2$. Hence $\delta_2(A\wedge B)=\delta_2(A)\wedge \delta_2(B)$ for all $A,\ B\in M$.
       \end{proof}

       Now we will see {\bf special properties of $\delta_0$-primary and $\delta_1$-primary elements} in an $L$-module $M$ by relating it with 2-absorbing and 2-absorbing primary elements of an $L$-module $M$.
       
       The interrelations among prime, primary, 2-absorbing and 2-absorbing primary elements of $M$ are given in following theorems whose proofs being obvious are omitted. 
       
       \begin{thm}\label{T-C04}
       Every prime element of an $L$-module $M$ is primary and 2-absorbing.
       \end{thm}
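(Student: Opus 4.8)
The plan is to prove the two assertions separately, since both reduce almost immediately to the defining condition of a prime element, namely that $aX\leqslant N$ forces $X\leqslant N$ or $aI_M\leqslant N$.

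First I would establish that a prime element $N\in M$ is primary. Take $a\in L$ and $X\in M$ with $aX\leqslant N$. Applying the prime hypothesis directly, either $X\leqslant N$, in which case the primary condition holds, or $aI_M\leqslant N$. In the latter case I simply observe that $a^{1}I_M=aI_M\leqslant N$, so the primary condition is met with $n=1\in Z_+$. Hence $N$ is primary with no further work required.

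Next I would show that a prime element $N\in M$ is 2-absorbing. Suppose $abX\leqslant N$ for $a,b\in L$ and $X\in M$. The key move is to regroup the product as $a(bX)\leqslant N$, viewing $bX$ as a single element of $M$ and $a$ as a scalar, and then invoke primeness of $N$. This yields either $bX\leqslant N$ — which is one of the three disjuncts in the definition of 2-absorbing — or $aI_M\leqslant N$. In the second case I translate the membership into the residual: $aI_M\leqslant N$ gives $a\leqslant(N:I_M)$ by the definition of $(N:I_M)$, and since $ab\leqslant a$ (as $b\leqslant 1$ and multiplication is monotone), it follows that $ab\leqslant(N:I_M)$, which is again one of the required disjuncts. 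Thus in every case one of $ab\leqslant(N:I_M)$, $bX\leqslant N$, or $aX\leqslant N$ holds, so $N$ is 2-absorbing.

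There is essentially no obstacle here; the proof is purely definitional, which is why the paper omits it. The only point that deserves a moment of care is the step $aI_M\leqslant N\Rightarrow ab\leqslant(N:I_M)$ in the 2-absorbing argument, where one must pass from the module inequality to a lattice inequality via the residual and use monotonicity of the multiplication to absorb the extra factor $b$. Everything else is a direct application of the prime condition.
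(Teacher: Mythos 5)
Your proof is correct and is exactly the routine definitional argument the paper has in mind when it omits the proof as obvious: $n=1$ handles the primary case, and regrouping $abX$ as $a(bX)$ plus the observation $aI_M\leqslant N\Rightarrow ab\leqslant a\leqslant(N:I_M)$ handles the 2-absorbing case.
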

       
       \begin{thm}\label{T-C05}
       If $Q$ is a primary element of an $L$-module $M$ then $\sqrt{Q:I_M}$ is a prime element and hence a 2-absorbing element of $L$. Also, it is a 2-absorbing primary element of $L$. 
       \end{thm}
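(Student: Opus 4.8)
The plan is to reduce the statement to three facts, only the first of which requires real content. The primeness of $\sqrt{Q:I_M}$ is exactly Theorem~1 of \cite{MK2}, which is quoted in the discussion preceding this theorem: if $Q$ is a primary element of the $L$-module $M$, then $p:=\sqrt{Q:I_M}$ is a prime element of $L$. So I would simply invoke that result to settle the first assertion, and then observe that the two remaining claims (``hence a 2-absorbing element'' and ``also a 2-absorbing primary element'') are purely lattice-theoretic consequences of $p$ being prime in $L$. In other words, the whole argument comes down to proving that \emph{a prime element of a multiplicative lattice is automatically 2-absorbing and 2-absorbing primary}, which is the lattice counterpart of Theorem~\ref{T-C04}.

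For the 2-absorbing claim, I would take $a,b,c\in L$ with $abc\leqslant p$ and regard this as $(ab)c\leqslant p$. Since $p$ is prime, either $ab\leqslant p$ or $c\leqslant p$. In the first case the 2-absorbing condition holds outright; in the second case I would use order-preservation of the product together with $c\leqslant p$ to get $bc\leqslant bp\leqslant 1\cdot p=p$, so again one of $ab,bc,ca$ lies below $p$. Note that $p<1$ is automatic since prime elements are proper, so $p$ is indeed a legitimate 2-absorbing element.

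Finally, the 2-absorbing primary assertion follows from the 2-absorbing one with no extra work, using the general inequality $p\leqslant\sqrt{p}$ (valid because every compact element below $p$ satisfies $x^1\leqslant p$, hence lies below $\sqrt{p}$, and $L$ is compactly generated). Indeed, if $abc\leqslant p$ then by the previous step $ab\leqslant p$ or $bc\leqslant p$ or $ca\leqslant p$, and the last two alternatives yield $bc\leqslant\sqrt{p}$ or $ca\leqslant\sqrt{p}$, which is precisely the 2-absorbing primary requirement. I do not expect a genuine obstacle anywhere: the only substantive input is the cited primeness of $\sqrt{Q:I_M}$, after which the chain prime $\Rightarrow$ 2-absorbing $\Rightarrow$ 2-absorbing primary is routine. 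The single point deserving care is the inequality $bc\leqslant bp\leqslant p$, which uses that multiplication in $L$ is order-preserving and that $1$ is the multiplicative identity; this is where the lattice axioms are actually applied.
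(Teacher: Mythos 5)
Your proposal is correct and follows exactly the route the paper intends (the authors omit the proof as ``obvious''): primeness of $\sqrt{Q:I_M}$ is Theorem~1 of \cite{MK2}, already quoted in the paper, and the implications prime $\Rightarrow$ 2-absorbing $\Rightarrow$ 2-absorbing primary follow from order-preservation of the product and $p\leqslant\sqrt{p}$ just as you argue. No gaps.
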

                    
       \begin{thm}\label{T-C06}
       If $Q$ is a 2-absorbing element of an $L$-module $M$ then both, $\sqrt{Q:I_M}$ and $(Q:I_M)$ are 2-absorbing elements of $L$. Also, they are 2-absorbing primary elements of $L$.
       \end{thm}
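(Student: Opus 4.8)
The plan is to establish the theorem in three steps, treating $(Q:I_M)$ first, then $\sqrt{Q:I_M}$, and finally disposing of the ``2-absorbing primary'' assertions for free. The last point is the easiest and I would dispatch it at the end by a general observation: since $q \leqslant \sqrt{q}$ for every $q \in L$, the defining condition of a 2-absorbing element of $L$ (for $abc \leqslant q$, one of $ab \leqslant q$, $bc \leqslant q$, $ca \leqslant q$ holds) already implies the weaker defining condition of a 2-absorbing primary element (one of $ab \leqslant q$, $bc \leqslant \sqrt{q}$, $ca \leqslant \sqrt{q}$ holds). Thus any 2-absorbing element of $L$ is automatically 2-absorbing primary, and the two ``Also'' statements require no extra work once $(Q:I_M)$ and $\sqrt{Q:I_M}$ are shown to be 2-absorbing.

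First I would show $(Q:I_M)$ is 2-absorbing. Writing $p=(Q:I_M)$, I take $a,b,c \in L$ with $abc \leqslant p$ and multiply by $I_M$ to get $ab(cI_M) = (abc)I_M \leqslant Q$. Applying the hypothesis that $Q$ is a 2-absorbing element of $M$ to the scalars $a,b \in L$ and the element $N=cI_M \in M$ yields three cases: $ab \leqslant (Q:I_M)=p$, or $b(cI_M)=(bc)I_M \leqslant Q$, or $a(cI_M)=(ca)I_M \leqslant Q$. Re-reading the last two through residuation gives $bc \leqslant p$ and $ca \leqslant p$ respectively, so one of $ab,bc,ca$ lies under $p$, which is exactly the 2-absorbing condition for $p$. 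This step is a routine translation and I expect no difficulty here.

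It remains to show $\sqrt{Q:I_M}=\sqrt{p}$ is 2-absorbing, and this is where the real content lies. The clean route is to invoke the multiplicative-lattice fact that the radical of a 2-absorbing element is again 2-absorbing, available from \cite{JTY}; combined with the previous step this finishes the proof. If a self-contained argument is wanted, the idea is to pass to compact elements $x \leqslant a$, $y \leqslant b$, $z \leqslant c$, note that $xyz$ is compact and lies under $\sqrt{p}$ so that $(xyz)^m \leqslant p$ for some $m$ by a finite-cover argument on $\sqrt{p}=\vee\{w \in L_\ast \mid w^n \leqslant p\}$, then apply the 2-absorbing property of $p$ to $x^m,y^m,z^m$ and take radicals to land one of $xy,yz,zx$ under $\sqrt{p}$. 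The hard part will be the reassembly: the particular alternative produced may depend on the chosen triple $(x,y,z)$, so one cannot simply join over all compacts to recover a single alternative among $ab,bc,ca$; closing this gap needs the finer structural fact that the radical of a 2-absorbing element is prime or the meet of two primes. For this reason I would prefer to cite \cite{JTY} rather than reprove it, after which the theorem follows from the short residuation computation above together with the trivial 2-absorbing $\Rightarrow$ 2-absorbing primary implication.
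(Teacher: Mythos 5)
Your argument is correct. Note that the paper itself gives no proof here --- it states that the proofs of Theorems \ref{T-C04}--\ref{T-C02} ``being obvious are omitted'' --- and what you have written is precisely the intended routine argument: the residuation computation $abc\leqslant (Q:I_M)\Rightarrow ab(cI_M)\leqslant Q$ handles $(Q:I_M)$, the known fact from \cite{JTY} that the radical of a 2-absorbing element of $L$ is again 2-absorbing handles $\sqrt{Q:I_M}$, and $q\leqslant\sqrt{q}$ gives the 2-absorbing primary claims for free. You are also right to be wary of reproving the radical step by a naive compactness argument; citing \cite{JTY} (or, as the paper does elsewhere in Theorem \ref{T-C07}, Theorem 2.4 of \cite{CYT}) is the appropriate move.
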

       
       \begin{thm}\label{T-C01}
       Every 2-absorbing element of a multiplication $L$-module $M$ is 2-absorbing primary.
       \end{thm}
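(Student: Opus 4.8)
The plan is to invoke the $2$-absorbing hypothesis directly and then upgrade its two ``submodule'' conclusions using the fact that $M$ is a multiplication module. First I would start from an arbitrary relation $abN\leqslant Q$ with $a,b\in L$ and $N\in M$, and apply the definition of a $2$-absorbing element of $M$ to obtain one of the three alternatives: $ab\leqslant (Q:I_M)$, or $bN\leqslant Q$, or $aN\leqslant Q$. The first alternative is already the first disjunct demanded in the definition of a $2$-absorbing primary element, so that case is settled with no further work.

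The key auxiliary step is to record that in a multiplication $L$-module one has $Q\leqslant (\sqrt{Q:I_M})I_M$. Since $M$ is a multiplication module, $Q=qI_M$ for some $q\in L$, and then $q\leqslant (Q:I_M)$ together with the always-valid inequality $(Q:I_M)I_M\leqslant Q$ forces $Q=(Q:I_M)I_M$. Because the radical always satisfies $(Q:I_M)\leqslant \sqrt{Q:I_M}$ (every compact element below $(Q:I_M)$ lies trivially below its radical, and $L$ is compactly generated), monotonicity of the multiplication on $M$ yields $Q=(Q:I_M)I_M\leqslant (\sqrt{Q:I_M})I_M$.

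With this containment in hand the remaining two alternatives close immediately: if $bN\leqslant Q$ then $bN\leqslant (\sqrt{Q:I_M})I_M$, and likewise if $aN\leqslant Q$ then $aN\leqslant (\sqrt{Q:I_M})I_M$. Thus in every case one of the three disjuncts defining a $2$-absorbing primary element of $M$ holds, and therefore $Q$ is $2$-absorbing primary.

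I would expect the only point genuinely requiring care to be the containment $Q\leqslant (\sqrt{Q:I_M})I_M$; everything else is a direct reading of the two definitions side by side. The ``main obstacle'' is therefore essentially bookkeeping, namely ensuring that the multiplication hypothesis is used to rewrite $Q$ as $(Q:I_M)I_M$ so that the weaker radical bound $(\sqrt{Q:I_M})I_M$ can absorb the conclusions $bN\leqslant Q$ and $aN\leqslant Q$ coming from the $2$-absorbing property.
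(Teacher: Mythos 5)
Your proof is correct; the paper omits the proof of this theorem as ``obvious,'' and your argument is exactly the intended one. The one step that genuinely needs the multiplication hypothesis --- namely $Q=(Q:I_M)I_M\leqslant(\sqrt{Q:I_M})I_M$, which upgrades the conclusions $bN\leqslant Q$ and $aN\leqslant Q$ to the radical-bound disjuncts --- is handled correctly, including the use of compact generation of $L$ to get $(Q:I_M)\leqslant\sqrt{Q:I_M}$.
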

       
        \begin{thm}\label{T-C02}
        Every primary element of a multiplication $L$-module $M$ is 2-absorbing primary.
        \end{thm}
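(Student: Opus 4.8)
The plan is to verify the defining condition of a 2-absorbing primary element directly. So I start with $a,b\in L$ and $N\in M$ satisfying $abN\leqslant Q$, and I must produce one of the three conclusions $ab\leqslant (Q:I_M)$, $aN\leqslant(\sqrt{Q:I_M})I_M$, or $bN\leqslant(\sqrt{Q:I_M})I_M$. The key move is to regard $ab$ as a single element of $L$, so that $(ab)N\leqslant Q$ can be fed straight into the hypothesis that $Q$ is primary. This splits the argument into two cases: either $N\leqslant Q$, or $(ab)^kI_M\leqslant Q$ for some $k\in Z_+$.

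In the first case I would invoke the multiplication hypothesis: since $M$ is a multiplication module, $Q=(Q:I_M)I_M$, and because $(Q:I_M)\leqslant\sqrt{Q:I_M}$ this gives $Q\leqslant(\sqrt{Q:I_M})I_M$. Then, using $a\leqslant 1$ and property \textcircled{4}, $aN\leqslant N\leqslant Q\leqslant(\sqrt{Q:I_M})I_M$, so the third conclusion holds outright.

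In the second case I first pass from $(ab)^kI_M\leqslant Q$ to $(ab)^k\leqslant(Q:I_M)$, and then to $ab\leqslant\sqrt{Q:I_M}$. This last step is where I expect the only real care to be needed: the radical $\sqrt{Q:I_M}$ is defined as a join of \emph{compact} elements $x$ with $x^n\leqslant(Q:I_M)$, and $ab$ itself need not be compact, so it cannot be dropped into that defining set directly. To bridge this I use that $L$ is compactly generated, writing $ab=\vee\{c\in L_\ast\mid c\leqslant ab\}$ and noting $c^k\leqslant(ab)^k\leqslant(Q:I_M)$ for each such compact $c$, whence $c\leqslant\sqrt{Q:I_M}$ and therefore $ab\leqslant\sqrt{Q:I_M}$. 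Having this, I bring in the fact (Theorem 1 of \cite{MK2}) that the radical $\sqrt{Q:I_M}$ of a primary element is a prime element of $L$; primeness then forces $a\leqslant\sqrt{Q:I_M}$ or $b\leqslant\sqrt{Q:I_M}$. In the former case $aN\leqslant aI_M\leqslant(\sqrt{Q:I_M})I_M$, and in the latter $bN\leqslant bI_M\leqslant(\sqrt{Q:I_M})I_M$, so once more a required conclusion is reached.

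Since both cases land inside the required disjunction and $Q$ is proper by hypothesis, this establishes that $Q$ is 2-absorbing primary. The main obstacle is precisely the radical manipulation in the second case, where compact generation of $L$ is what lets me conclude $ab\leqslant\sqrt{Q:I_M}$ from $(ab)^k\leqslant(Q:I_M)$; everything else reduces to monotonicity of the module action and the standard identity $Q=(Q:I_M)I_M$ available in a multiplication $L$-module.
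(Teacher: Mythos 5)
Your argument is correct, but it is not the route the paper takes. The paper treats this statement as immediate and points to the factorization through Corollary \ref{C-C01} and Theorem \ref{T-C03}: there the primary condition is applied to the splitting $a(bN)\leqslant Q$, which yields directly either $bN\leqslant Q=(Q:I_M)I_M\leqslant(\sqrt{Q:I_M})I_M$ or $a^nI_M\leqslant Q$, whence $aN\leqslant aI_M\leqslant(\sqrt{Q:I_M})I_M$; no further structural input is needed. You instead apply the primary condition to the splitting $(ab)N\leqslant Q$, which forces you to import an extra ingredient --- that $\sqrt{Q:I_M}$ is a prime element of $L$ when $Q$ is primary (Theorem 1 of \cite{MK2}, which the paper does quote elsewhere) --- in order to break $ab\leqslant\sqrt{Q:I_M}$ into $a\leqslant\sqrt{Q:I_M}$ or $b\leqslant\sqrt{Q:I_M}$. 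Both proofs are valid and use the multiplication hypothesis in the same place (to get $Q\leqslant(\sqrt{Q:I_M})I_M$ when the module-element alternative of the primary condition fires). The paper's decomposition is the more economical and self-contained one; what your version buys is an explicit treatment of the passage from $(ab)^k\leqslant(Q:I_M)$ to $ab\leqslant\sqrt{Q:I_M}$ via compact generation of $L$, a point the paper silently elides in the corresponding step of Theorem \ref{T-C4} (where $a^n\leqslant(P:I_M)$ is converted to $a\leqslant\sqrt{P:I_M}$ without comment).
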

             
       \begin{thm}\label{T-C07}
       Let $L$ be a PG-lattice and M be a faithful multiplication PG-lattice $L$-module with $I_M$ compact. If $Q$ is a 2-absorbing primary element of $M$ then $(Q:I_M)$ is a 2-absorbing primary element of $L$ and $\sqrt{Q:I_M}$ is a 2-absorbing element of $L$.
       \end{thm}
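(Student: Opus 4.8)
The plan is to establish the two assertions separately: first that $(Q:I_M)$ is a 2-absorbing primary element of $L$, and then to deduce that $\sqrt{Q:I_M}$ is 2-absorbing by invoking the corresponding radical result for 2-absorbing primary elements of a multiplicative lattice.

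For the first assertion I would take arbitrary $a,b,c\in L$ with $abc\leqslant (Q:I_M)$, which is equivalent to $abcI_M\leqslant Q$. The key move is to regard $N=cI_M\in M$, so that $abN=abcI_M\leqslant Q$, and then apply the definition of a 2-absorbing primary element of $M$. This produces three cases: $ab\leqslant (Q:I_M)$, or $bN=bcI_M\leqslant (\sqrt{Q:I_M})I_M$, or $aN=acI_M\leqslant (\sqrt{Q:I_M})I_M$. The first case is exactly the condition $ab\leqslant (Q:I_M)$ we want. In the remaining two cases I would cancel $I_M$: since $M$ is a faithful multiplication module and $I_M$ is compact, Theorem 5 of \cite{CT} gives $bcI_M\leqslant (\sqrt{Q:I_M})I_M\Rightarrow bc\leqslant\sqrt{Q:I_M}$ and likewise $acI_M\leqslant (\sqrt{Q:I_M})I_M\Rightarrow ca\leqslant\sqrt{Q:I_M}$ (using $ac=ca$). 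These are precisely the alternatives $bc\leqslant\sqrt{(Q:I_M)}$ and $ca\leqslant\sqrt{(Q:I_M)}$, so $(Q:I_M)$ is 2-absorbing primary in $L$; its properness follows since $Q<I_M$ forces $(Q:I_M)<1$.

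For the second assertion I would observe that $\sqrt{Q:I_M}=\sqrt{(Q:I_M)}$ is the radical of the element $(Q:I_M)$ just shown to be 2-absorbing primary. The statement that the radical of a 2-absorbing primary element of a multiplicative lattice is 2-absorbing is the lattice analogue of the classical ring-theoretic fact and is available in \cite{CYT}; applying it to $(Q:I_M)$ yields that $\sqrt{Q:I_M}$ is 2-absorbing in $L$. Properness of $\sqrt{Q:I_M}$ is automatic because $(Q:I_M)<1$ and $1$ is compact with finite products of compacts compact.

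The step I expect to be the main obstacle is this second assertion. One cannot argue naively by extracting a common power, since $x\leqslant\sqrt{y}$ does not force $x^{n}\leqslant y$ for a non-compact $x$ (radicals need not be nilpotent in the required uniform sense). Hence either one appeals to the structural result of \cite{CYT}, which rests on describing $\sqrt{(Q:I_M)}$ through the prime elements above it, or, for a self-contained proof, one first reduces the 2-absorbing condition to compact $a,b,c\in L_\ast$ using that $L$ is compactly generated with finite products of compacts compact, and only then runs the power-extraction argument $(abc)^{n}\leqslant (Q:I_M)$ on those compact elements. I would take the first route and cite \cite{CYT}, treating the first assertion as the genuinely module-theoretic content of the theorem.
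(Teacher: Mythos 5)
Your proposal is correct and follows essentially the same route as the paper: reduce $abc\leqslant (Q:I_M)$ to $ab(cI_M)\leqslant Q$, apply the 2-absorbing primary property of $Q$ in $M$, cancel $I_M$ via Theorem 5 of \cite{CT} using compactness of $I_M$, and then invoke the result of \cite{CYT} (Theorem 2.4 there) that the radical of a 2-absorbing primary element of $L$ is 2-absorbing. Your added caution about why the radical step should be cited rather than argued by naive power extraction is sensible but does not change the argument.
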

        \begin{proof}
        Let $abc\leqslant (Q:I_M)$ for $a, b, c\in L$. Then as $ab(cI_M)\leqslant Q$ and $Q$ is a 2-absorbing primary  element of $M$, we have, either $ab\leqslant (Q:I_M)$ or $a(cI_M)\leqslant (\sqrt{Q:I_M})I_M$ or $b(cI_M)\leqslant (\sqrt{Q:I_M})I_M$. Since $I_M$ is compact, by Theorem 5 of \cite{CT}, it follows that, either $ab\leqslant (Q:I_M)$ or $ac\leqslant \sqrt{Q:I_M}$ or $bc\leqslant \sqrt{Q:I_M}$ and hence $(Q:I_M)$ is a 2-absorbing primary element of $L$. By Theorem 2.4 in \cite{CYT},it follows that $\sqrt{Q:I_M}$ is a 2-absorbing element of $L$.
       \end{proof}
       
       Relating this concepts to the Theorems \ref{T-C41}, \ref{T-C4} and Corollary \ref{C-C01}, we get, following properties of $\delta_0$-primary and $\delta_1$-primary elements of an $L$-module $M$.       
       
      \begin{thm}\label{T-C08}
      Every $\delta_0$-primary element $Q$ of an $L$-module $M$ is primary and 2-absorbing where $\delta_0$ is an expansion function on $M$. Also then both, $\sqrt{Q:I_M}$ and $(Q:I_M)$ are 2-absorbing and hence 2-absorbing primary elements of $L$. 
      \end{thm}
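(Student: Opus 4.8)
The plan is to recognize that this result is a direct assembly of the interrelation theorems proved just above, once one identifies $\delta_0$-primary elements with prime elements of $M$. The pivotal observation is Theorem \ref{T-C41}, which asserts that, for the identity expansion function $\delta_0$, a proper element of $M$ is $\delta_0$-primary if and only if it is prime. Hence the given $\delta_0$-primary element $Q$ is simply a prime element of $M$, and in particular $Q$ is proper, so all the downstream hypotheses that demand properness or primeness are automatically satisfied.

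Having made this identification, I would invoke Theorem \ref{T-C04}, which guarantees that every prime element of $M$ is both primary and 2-absorbing. This immediately settles the first assertion: $Q$ is primary and 2-absorbing. Next, since $Q$ is now known to be a 2-absorbing element of $M$, I would apply Theorem \ref{T-C06}, whose conclusion is precisely that both $\sqrt{Q:I_M}$ and $(Q:I_M)$ are 2-absorbing elements of $L$ and, moreover, 2-absorbing primary elements of $L$. Stitching these three facts together yields the full statement in the order it is phrased.

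There is essentially no analytical obstacle here; the entire content lies in selecting the correct chain of previously established results and verifying that their hypotheses are met at each link. The only point requiring a moment's care is the logical direction in which Theorem \ref{T-C41} is used, namely from $\delta_0$-primary to prime, so that the strong structural consequences available for prime elements can be brought to bear. Because $q\leqslant\sqrt{q}$ forces every 2-absorbing element of $L$ to be 2-absorbing primary, the closing \emph{hence} in the statement is automatic and needs no separate argument.
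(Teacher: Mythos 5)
Your proposal is correct and follows exactly the paper's own argument, which cites Theorems \ref{T-C41}, \ref{T-C04} and \ref{T-C06} in precisely the chain you describe. Nothing further is needed.
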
  
      \begin{proof}
      The proof follows from Theorems \ref{T-C41}, \ref{T-C04} and \ref{T-C06}.
      \end{proof}
              
         \begin{thm}\label{T-C09}
         Every $\delta_0$-primary element $Q$ of a multiplication $L$-module $M$ is 2-absorbing primary where $\delta_0$ is an expansion function on $M$. Also then $(Q:I_M)$ is a 2-absorbing primary element of $L$ provided $M$ is a faithful multiplication PG-lattice with $I_M$ compact and $L$ as a PG-lattice. Further, $\sqrt{Q:I_M}$ is a 2-absorbing element of $L$.
        \end{thm}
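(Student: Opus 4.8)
The plan is to read off the conclusion from the chain of implications already assembled in this section, using Theorem \ref{T-C41} as the bridge between $\delta_0$-primary and prime elements. First I would invoke Theorem \ref{T-C41} to replace the hypothesis that $Q$ is $\delta_0$-primary by the equivalent statement that $Q$ is prime; this single observation reduces the whole problem to facts about prime elements that have already been recorded.

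For the first assertion, that $Q$ is $2$-absorbing primary, I would climb the hierarchy of element types. By Theorem \ref{T-C04} a prime element of $M$ is both primary and $2$-absorbing, and since $M$ is a multiplication $L$-module, Theorem \ref{T-C02} then promotes primary to $2$-absorbing primary; equivalently, one may promote $2$-absorbing to $2$-absorbing primary via Theorem \ref{T-C01}. Either route delivers that $Q$ is a $2$-absorbing primary element of $M$.

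For the remaining two assertions, which descend to $L$, I would feed the $2$-absorbing primary element $Q$ just obtained into Theorem \ref{T-C07}. The extra hypotheses appearing in the present statement, namely that $L$ is a PG-lattice and $M$ is a faithful multiplication PG-lattice with $I_M$ compact, are precisely those demanded by Theorem \ref{T-C07}, so its conclusion applies verbatim: $(Q:I_M)$ is a $2$-absorbing primary element of $L$ and $\sqrt{Q:I_M}$ is a $2$-absorbing element of $L$.

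I expect no genuine obstacle, since every step is a citation of an earlier result; the only point requiring care is the bookkeeping of hypotheses. The first assertion needs merely that $M$ is a multiplication $L$-module, whereas the last two assertions require the full strength of the stated hypotheses so that Theorem \ref{T-C07} is applicable. Thus the proof amounts to combining Theorems \ref{T-C41}, \ref{T-C04}, \ref{T-C02} (or \ref{T-C01}) and \ref{T-C07}.
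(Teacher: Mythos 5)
Your proposal is correct and follows exactly the paper's own route: the paper's proof is the one-line citation of Theorems \ref{T-C41}, \ref{T-C04}, \ref{T-C02} and \ref{T-C07}, which is precisely the chain you assemble. Your added care about which hypotheses are needed for which assertion is accurate but not a departure from the paper's argument.
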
  
        \begin{proof}
        The proof follows from Theorems \ref{T-C41}, \ref{T-C04}, \ref{T-C02} and \ref{T-C07}.
        \end{proof}
       
       \begin{thm}\label{T-C03}
       Every $\delta_1$-primary element $Q$ of a multiplication $L$-module $M$ is 2-absorbing primary where $\delta_1$ is an expansion function on $M$. 
       \end{thm}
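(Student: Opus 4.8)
The plan is to check the definition of a 2-absorbing primary element of $M$ head-on, relying on the single observation that the expansion $\delta_1(Q)=(\sqrt{Q:I_M})I_M$ is exactly the element that appears as the bound in the 2-absorbing primary condition. Since $M$ is a multiplication lattice module, $\delta_1$ is well-defined on $M$, and this is the only place the multiplication hypothesis is used; in particular I do not expect to need faithfulness, compactness of $I_M$, or any PG-lattice assumption, so Theorem~\ref{T-C4} is deliberately not invoked.

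First I would fix $a,b\in L$ and $N\in M$ with $abN\leqslant Q$ and regroup the product by associativity (property~\textcircled{3} of an $L$-module) as $a\,(bN)\leqslant Q$, treating $bN$ as a single element of $M$ and $a$ as an element of $L$. Applying the definition of a $\delta_1$-primary element to this inequality yields two alternatives: either $bN\leqslant Q$ or $aI_M\leqslant\delta_1(Q)=(\sqrt{Q:I_M})I_M$.

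In the first alternative I would use that $\delta_1$ is an expansion function, so $Q\leqslant\delta_1(Q)=(\sqrt{Q:I_M})I_M$ by property~\textcircled{1}; hence $bN\leqslant Q\leqslant(\sqrt{Q:I_M})I_M$, which is the second clause of the 2-absorbing primary condition. In the second alternative I would use monotonicity of the module multiplication (a consequence of property~\textcircled{2}) together with $N\leqslant I_M$ to obtain $aN\leqslant aI_M\leqslant(\sqrt{Q:I_M})I_M$, which is the third clause. Either way one of the three required conclusions holds, so $Q$ is 2-absorbing primary.

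The argument is short, and the only point that requires care is recognizing that the two branches produced by the $\delta_1$-primary hypothesis land on two of the three clauses of the 2-absorbing primary definition without ever needing the clause $ab\leqslant(Q:I_M)$. Thus the main ``obstacle'' is really just the bookkeeping of matching the branches to the clauses and confirming that $\delta_1(Q)$ coincides with $(\sqrt{Q:I_M})I_M$; no deeper structural lemma is needed, which is precisely why the hypotheses here can be weaker than those of Theorem~\ref{T-C4}.
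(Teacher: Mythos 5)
Your proof is correct, and it follows the same one-step strategy as the paper: regroup $abN\leqslant Q$ as $a(bN)\leqslant Q$, apply the $\delta_1$-primary condition, and match the two branches to clauses of the 2-absorbing primary definition. The difference is which form of the condition is applied and hence which clauses are hit. The paper uses the reformulation stated immediately after the definition of a $\delta$-primary element (``either $A\leqslant\delta(P)$ or $aI_M\leqslant P$''), obtaining either $bN\leqslant\delta_1(Q)=(\sqrt{Q:I_M})I_M$ or $aI_M\leqslant Q$; the latter gives $ab\leqslant a\leqslant (Q:I_M)$, i.e.\ the first clause. You instead apply the definition as originally stated, obtaining either $bN\leqslant Q\leqslant\delta_1(Q)=(\sqrt{Q:I_M})I_M$ or $aI_M\leqslant\delta_1(Q)$, whence $aN\leqslant aI_M\leqslant(\sqrt{Q:I_M})I_M$ --- the second and third clauses. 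Your route has the small advantage of not leaning on the asserted (but unproved) equivalence of the two formulations; both arguments are of the same length, and in both the multiplication hypothesis enters only to make $\delta_1$ well defined, so your remark that faithfulness, compactness of $I_M$ and the PG-lattice assumptions are not needed is accurate and matches the theorem as stated.
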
        
       \begin{proof}
       Assume that a proper element $Q$ of a multiplication $L$-module $M$ is $\delta_1$-primary. Let $abN\leqslant Q$ for $a, b\in L$ and $N\in M$. Then as $Q$ is $\delta_1$-primary and $a(bN)\leqslant Q$, we have, either $bN\leqslant \delta_1(Q)$ or $aI_M\leqslant Q$ which implies either $bN\leqslant (\sqrt{Q:I_M})I_M$ or $ab\leqslant (Q:I_M)$. Thus $Q$ is a 2-absorbing primary element of $M$.
       \end{proof}
       
       Theorem \ref{T-C02} can also be acheieved through Corollary \ref{C-C01} and Theorem \ref{T-C03}.
       
       \begin{thm}\label{T-C10}
        Let $L$ be a PG-lattice and $M$ be a faithful multiplication PG-lattice $L$-module with $I_M$ compact. Let $\delta_1$ be an expansion function on $M$. If a proper element $Q$ of $M$ is  $\delta_1$-primary then $(Q:I_M)$ is a 2-absorbing primary element of $L$ and $\sqrt{Q:I_M}$ is a 2-absorbing element of $L$.
        \end{thm}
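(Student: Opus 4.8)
The plan is to prove this not by wrestling with the $\delta_1$-primary condition directly, but by recognizing the statement as a two-step composition of results already established in the excerpt. First I would note that $M$ is a multiplication $L$-module, so Theorem \ref{T-C03} applies verbatim: every $\delta_1$-primary element $Q$ of $M$ is 2-absorbing primary in $M$. This single invocation converts the hypothesis ``$Q$ is $\delta_1$-primary'' into the hypothesis ``$Q$ is 2-absorbing primary in $M$,'' which is exactly the input required by the transfer result.

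Next I would observe that the standing assumptions of the present theorem---$L$ a PG-lattice, $M$ a faithful multiplication PG-lattice $L$-module with $I_M$ compact---coincide precisely with those of Theorem \ref{T-C07}. Applying Theorem \ref{T-C07} to the 2-absorbing primary element $Q$ of $M$ then yields at once both conclusions: $(Q:I_M)$ is a 2-absorbing primary element of $L$, and $\sqrt{Q:I_M}$ is a 2-absorbing element of $L$. Thus the whole proof is the chain $\delta_1$-primary in $M$ $\Rightarrow$ 2-absorbing primary in $M$ (Theorem \ref{T-C03}) $\Rightarrow$ the stated conclusions in $L$ (Theorem \ref{T-C07}). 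As a consistency check I would also note the equivalent route through Theorem \ref{T-C4} (under these hypotheses $Q$ is $\delta_1$-primary iff it is primary) followed by Theorem \ref{T-C02} (a primary element of a multiplication module is 2-absorbing primary); both routes meet at the same intermediate fact that $Q$ is 2-absorbing primary in $M$.

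There is essentially no genuine obstacle here, only a hypothesis-bookkeeping check. The one point deserving explicit attention is the compactness of $I_M$: in the proof of Theorem \ref{T-C07} it is compactness, via Theorem 5 of \cite{CT}, that lets one descend from an inequality such as $a(cI_M)\leqslant(\sqrt{Q:I_M})I_M$ in $M$ to $ac\leqslant\sqrt{Q:I_M}$ in $L$. Since $I_M$ is assumed compact in the present statement, this step is licensed without any new work, and the fact that $\sqrt{Q:I_M}$ is 2-absorbing in $L$ is then read off exactly as in Theorem \ref{T-C07} (through Theorem 2.4 of \cite{CYT}). Hence the theorem is a corollary, and I would present the proof simply as the two cited implications.
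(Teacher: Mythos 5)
Your proposal is correct and matches the paper's own proof exactly: the paper proves Theorem \ref{T-C10} by citing either Theorems \ref{T-C03} and \ref{T-C07}, or Theorems \ref{T-C4}, \ref{T-C02} and \ref{T-C07} --- precisely the primary chain and the consistency-check chain you describe. Your additional remark on where the compactness of $I_M$ is actually used is accurate and a useful gloss on the citation-only proof in the paper.
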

        \begin{proof}
        The proof follows either from Theorems \ref{T-C4}, \ref{T-C02} and \ref{T-C07} or from Theorems \ref{T-C03} and \ref{T-C07}.
        \end{proof}
           
         Note that, given an expansion function $\delta_1$ on a multiplication $L$-module $M$, $\delta_1(N)$ is an element of $M$ for all $N\in M$. Now we will see properties of this element $\delta_1(N)$.
        
         \begin{thm}\label{T-C14}
         Let $\delta_1$ be an expansion function on a multiplication $L$-module $M$.  Then for every prime element $N$ of  $M$, $(\sqrt{N:I_M})\delta_1(N)\leqslant N\leqslant \delta_1(N)$. 
         \end{thm}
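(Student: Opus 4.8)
The plan is to prove the two inequalities separately. The right-hand inequality $N\leqslant\delta_1(N)$ requires no work at all: it is precisely condition \textcircled{1} in the definition of the expansion function $\delta_1$ on $M$, so it holds for \emph{any} element, prime or not. All the content lies in the left-hand inequality $(\sqrt{N:I_M})\delta_1(N)\leqslant N$, and my first step there is to simplify the product. Writing $p=\sqrt{N:I_M}$ and recalling that $\delta_1(N)=(\sqrt{N:I_M})I_M=pI_M$, the associativity axiom \textcircled{3} of the module action gives $(\sqrt{N:I_M})\delta_1(N)=p(pI_M)=p^2I_M$. Hence it suffices to show $p^2I_M\leqslant N$, equivalently $p^2\leqslant(N:I_M)$.

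The key observation, and the step where primeness of $N$ is actually used, is that $\sqrt{N:I_M}=(N:I_M)$ for a prime element $N$. Indeed, by Proposition 3.6 of \cite{A} (recalled in the introduction) the element $q:=(N:I_M)$ is prime in $L$, and for a prime element $q$ of the compactly generated lattice $L$ one has $\sqrt{q}=q$. The inclusion $q\leqslant\sqrt{q}$ holds for every element, while $\sqrt{q}\leqslant q$ follows because any compact $x$ with $x^{n}\leqslant q$ satisfies $x\leqslant q$ by a short induction on $n$: from $x\cdot x^{n-1}\leqslant q$ and primeness of $q$ one gets $x\leqslant q$ or $x^{n-1}\leqslant q$, and the inductive hypothesis closes the second case. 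Applying this to $q=(N:I_M)$ yields $p=\sqrt{N:I_M}=(N:I_M)$.

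With $p=(N:I_M)$ in hand the computation is routine: since $(N:I_M)\leqslant 1$ we have $p^2=(N:I_M)(N:I_M)\leqslant(N:I_M)\cdot 1=(N:I_M)$, and therefore $p^2I_M\leqslant(N:I_M)I_M\leqslant N$, where the last inequality is the standard residuation fact $(N:I_M)I_M\leqslant N$ obtained from the definition of the residual together with the join axiom \textcircled{1}. Combining this with the reduction of the first paragraph gives $(\sqrt{N:I_M})\delta_1(N)=p^2I_M\leqslant N$, and together with $N\leqslant\delta_1(N)$ the chain $(\sqrt{N:I_M})\delta_1(N)\leqslant N\leqslant\delta_1(N)$ follows. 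The only non-formal ingredient is the identity $\sqrt{q}=q$ for prime $q$ (i.e. the reduction to primeness of $(N:I_M)$); I expect that to be the crux, with the remaining manipulations being bookkeeping in the module axioms.
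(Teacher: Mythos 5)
Your proof is correct, but it takes a genuinely different route from the paper's. The paper argues: $N$ prime $\Rightarrow$ $N$ is $2$-absorbing in $M$ (Theorem \ref{T-C04}) $\Rightarrow$ $(N:I_M)$ is $2$-absorbing in $L$ (Corollary 2.13 of \cite{MB1}) $\Rightarrow$ $(\sqrt{N:I_M})^2\leqslant(N:I_M)$ by Lemma 2(iii) of \cite{JTY}, and then multiplies by $I_M$. You instead use the sharper fact that $(N:I_M)$ is \emph{prime} in $L$ (Proposition 3.6 of \cite{A}) together with $\sqrt{q}=q$ for a prime $q$ in a CG-lattice, which your induction establishes correctly. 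Your argument is more elementary (no $2$-absorbing machinery) and in fact proves more: since $\sqrt{N:I_M}=(N:I_M)$ and $M$ is a multiplication module, $\delta_1(N)=(N:I_M)I_M=N$, so the asserted chain collapses to the near-trivial $(N:I_M)N\leqslant N\leqslant N$. What the paper's weaker route buys is reusability: the immediately following theorem treats a $p$-primary $2$-absorbing $N$, where $(N:I_M)$ need not be prime, and there only the $2$-absorbing estimate $(\sqrt{N:I_M})^2\leqslant(N:I_M)$ survives; the paper's proof transfers verbatim to that setting while yours does not.
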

         \begin{proof}
         Clearly, $N\leqslant \delta_1(N)$. As $N\in M$ is prime, by Theorem \ref{T-C04}, $N$ is a 2-absorbing element of $M$. Then from Corollary 2.13 in \cite{MB1}, $(N:I_M)$ is a 2-absorbing element of $L$. So by Lemma 2(iii) of \cite{JTY}, we have, $(\sqrt{N:I_M})^2\leqslant (N:I_M)$ which implies $(\sqrt{N:I_M})(\sqrt{N:I_M})I_M\leqslant (N:I_M)I_M$. Thus $(\sqrt{N:I_M})\delta_1(N) \leqslant N$ as $M$ is a multiplication module. 
         \end{proof} 
         
         \begin{thm}
         Let a proper element $N$ of a multiplication $L$-module $M$ be $p$-primary and 2-absorbing. Then $p ~ \delta_1(N)\leqslant N\leqslant \delta_1(N)$ where $\delta_1$ is an expansion function on $M$.                  
         \end{thm}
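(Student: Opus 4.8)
The plan is to prove the two inequalities of the chain $p\,\delta_1(N)\leqslant N\leqslant \delta_1(N)$ separately, essentially mirroring the argument of Theorem \ref{T-C14} but drawing on the present hypotheses that $N$ is $p$-primary and $2$-absorbing in place of the primeness used there. The right-hand inequality $N\leqslant \delta_1(N)$ is immediate, since $\delta_1$ is an expansion function on $M$ and so condition \textcircled{1} of the definition gives $N\leqslant\delta_1(N)$ with no further work.

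For the left-hand inequality, first I would record that the $p$-primary hypothesis fixes the notation $p=\sqrt{N:I_M}$, so that by the definition of $\delta_1$ on the multiplication module $M$ we have $\delta_1(N)=(\sqrt{N:I_M})I_M=pI_M$, and hence $p\,\delta_1(N)=p(pI_M)=p^2I_M=(\sqrt{N:I_M})^2I_M$. Thus it suffices to establish $(\sqrt{N:I_M})^2I_M\leqslant N$. Next, using that $N$ is a $2$-absorbing element of $M$, Corollary 2.13 in \cite{MB1} yields that $(N:I_M)$ is a $2$-absorbing element of $L$. Then Lemma 2(iii) of \cite{JTY} applies to the $2$-absorbing element $(N:I_M)$ of $L$ and gives $(\sqrt{N:I_M})^2\leqslant (N:I_M)$, whence multiplying by $I_M$ we obtain $(\sqrt{N:I_M})^2I_M\leqslant (N:I_M)I_M$. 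Finally, since $(N:I_M)I_M\leqslant N$ holds by the join-distributivity of the module action, I conclude $p\,\delta_1(N)=(\sqrt{N:I_M})^2I_M\leqslant N$, which together with the first inequality completes the chain.

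I do not expect a genuine obstacle here, since every step is a direct invocation of an already-established result and the skeleton is identical to that of Theorem \ref{T-C14}, with the prime hypothesis on $N$ replaced by the $2$-absorbing hypothesis (which in Theorem \ref{T-C14} was itself obtained from primeness via Theorem \ref{T-C04}). The only point calling for slight care is the bookkeeping of where each hypothesis is used: the $p$-primary hypothesis enters solely to identify $p$ with $\sqrt{N:I_M}$ so that $p\,\delta_1(N)$ may be rewritten as $(\sqrt{N:I_M})^2I_M$ (the primeness of $p$ itself plays no role in the inequalities), while the $2$-absorbing hypothesis is what powers the passage through $(N:I_M)$ and Lemma 2(iii) of \cite{JTY}. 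With that accounting in place, the multiplication property of $M$ is what makes $\delta_1(N)=(\sqrt{N:I_M})I_M$ meaningful and the reduction to $(N:I_M)I_M\leqslant N$ legitimate.
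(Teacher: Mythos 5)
Your proposal is correct and follows exactly the route the paper intends: the paper's proof is simply the remark that the argument runs along the same lines as Theorem \ref{T-C14}, and your write-up is precisely that argument, with the $2$-absorbing hypothesis taken directly from the statement (instead of being derived from primeness) and the $p$-primary hypothesis used only to identify $p$ with $\sqrt{N:I_M}$. No gaps.
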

         \begin{proof}
         The proof runs on the same lines as that of the proof of above Theorem \ref{T-C14}.
         \end{proof}
         
         \begin{thm}
         For every proper element $N$ of a multiplication $L$-module $M$, $N\leqslant \delta_1((N:I_M)N)$ and equality holds if $N\in M$ is prime where $\delta_1$ is an expansion function on $M$.                  
         \end{thm}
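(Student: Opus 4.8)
The plan is to push both claims down to inequalities in $L$ by exploiting the multiplication hypothesis on $M$. First I would record that in a multiplication $L$-module every element satisfies $N=(N:I_M)I_M$: writing $N=aI_M$ for some $a\in L$ forces $a\leqslant (N:I_M)$, while the residual always gives $(N:I_M)I_M\leqslant N$, so the two coincide. Putting $p=(N:I_M)$, the element to which $\delta_1$ is applied becomes $(N:I_M)N=pN=p(pI_M)=p^2I_M$, so that $\delta_1((N:I_M)N)=(\sqrt{(p^2I_M:I_M)})I_M$.

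For the inequality $N\leqslant\delta_1((N:I_M)N)$, which holds for every proper $N$, I would note that $p^2\leqslant (p^2I_M:I_M)$ and hence $\sqrt{p^2}\leqslant\sqrt{(p^2I_M:I_M)}$ by monotonicity of the radical. Since every compact $x\leqslant p$ satisfies $x^2\leqslant p^2$, we get $p\leqslant\sqrt{p^2}$, and therefore $p\leqslant\sqrt{(p^2I_M:I_M)}$. Multiplying through by $I_M$ and using $N=pI_M$ gives $N\leqslant(\sqrt{(p^2I_M:I_M)})I_M=\delta_1((N:I_M)N)$.

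For the equality when $N$ is prime, only the reverse inequality $\delta_1((N:I_M)N)\leqslant N$ remains. Here I would invoke that $p=(N:I_M)$ is a prime element of $L$ (Proposition 3.6 of \cite{A}) and the bound $(p^2I_M:I_M)=(pN:I_M)\leqslant (N:I_M)=p$, which comes from $pN\leqslant N$. Applying the radical together with the fact that $\sqrt{p}=p$ for a prime $p\in L$ (if $x^n\leqslant p$ then primality forces $x\leqslant p$) yields $\sqrt{(pN:I_M)}\leqslant\sqrt{p}=p$, so $\delta_1((N:I_M)N)=(\sqrt{(pN:I_M)})I_M\leqslant pI_M=N$. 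Combined with the previous paragraph this gives $N=\delta_1((N:I_M)N)$.

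Since the computations are short, the only points requiring care are the two radical identities $\sqrt{p^2}=\sqrt{p}$ and $\sqrt{p}=p$ for prime $p$, and the module identity $N=(N:I_M)I_M$ supplied by the multiplication hypothesis. I expect the reverse inequality in the prime case to be the substantive step, since it is the one that genuinely uses primality of $N$ (through primality of $(N:I_M)$ in $L$); the displayed inequality itself is valid for every proper element $N$.
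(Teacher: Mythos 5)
Your proof is correct and follows essentially the same route as the paper's: both obtain $N\leqslant\delta_1((N:I_M)N)$ from $N=(N:I_M)I_M$ and $(N:I_M)^2\leqslant((N:I_M)N:I_M)$, and both get the reverse inequality in the prime case from Proposition 3.6 of \cite{A} together with the fact that $x^n\leqslant(N:I_M)$ forces $x\leqslant(N:I_M)$ when $(N:I_M)$ is prime. The only cosmetic difference is that you package that last step as the identity $\sqrt{p}=p$ while the paper argues element-wise with a compact $r\leqslant\sqrt{((N:I_M)N):I_M}$.
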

         \begin{proof}
         Since $M$ is a multiplication lattice $L$-module, $N=(N:I_M)I_M$. As $(N:I_M)^2I_M=(N:I_M)N$, we have $(N:I_M)\leqslant \sqrt{((N:I_M)N):I_M}$ which implies $N=(N:I_M)I_M\leqslant (\sqrt{((N:I_M)N):I_M})I_M=\delta_1((N:I_M)N)$. Now assume that $N\in M$ is prime. Then by Proposition 3.6 of \cite{A}, $(N:I_M)\in L$ is prime. Let $r\in L$ be such that $r\leqslant \sqrt{((N:I_M)N):I_M}$. Then $r^nI_M\leqslant (N:I_M)N\leqslant N$ for some $n\in Z_+$. Since $N$ is prime and $r^n\leqslant (N:I_M)$, it follows that $r\leqslant (N:I_M)$. Thus $\sqrt{((N:I_M)N):I_M}\leqslant (N:I_M)$. Hence   $\delta_1((N:I_M)N)\leqslant N$. Therefore $N=\delta_1((N:I_M)N)$ if $N\in M$ is prime.
         \end{proof}
         
        \begin{thm}\label{T-C13}
         Let $L$ be a PG-lattice and $M$ be a faithful multiplication PG-lattice $L$-module with $I_M$ compact. Then given an expansion function $\delta_1$ on $M$,   $((\delta_1(N)):I_M)=\sqrt{N:I_M}$ for every proper element $N\in M$.
        \end{thm}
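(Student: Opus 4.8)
The plan is to reduce the claim to the single residuation identity $(aI_M:I_M)=a$ for $a=\sqrt{N:I_M}\in L$, and then to establish this identity by two opposite inequalities, with the nontrivial one resting on the compactness of $I_M$. First I would record that, since $M$ is a multiplication lattice $L$-module, the expansion function $\delta_1$ gives $\delta_1(N)=(\sqrt{N:I_M})I_M=aI_M$ directly from its definition, so that the left-hand side $((\delta_1(N)):I_M)$ is exactly the residual $(aI_M:I_M)$ in $L$. Thus the whole theorem comes down to showing $(aI_M:I_M)=a$.

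The easy inequality is $a\leqslant (aI_M:I_M)$. Since $aI_M\leqslant aI_M$, the element $a$ lies in the set $\{x\in L\mid xI_M\leqslant aI_M\}$ whose join defines $(aI_M:I_M)$, and hence $a\leqslant (aI_M:I_M)$.

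For the reverse inequality $(aI_M:I_M)\leqslant a$, I would set $t=(aI_M:I_M)=\vee\{x\in L\mid xI_M\leqslant aI_M\}$. Using the join-distributivity of the $L$-action (property \textcircled{1} in the definition of an $L$-module), the join commutes with the action on $I_M$, so $tI_M=\vee(xI_M)\leqslant aI_M$. At this point I would invoke the compactness of $I_M$: since $M$ is a faithful multiplication PG-lattice $L$-module with $I_M$ compact, Theorem 5 of \cite{CT} provides the cancellation $bI_M\leqslant cI_M\Rightarrow b\leqslant c$ --- exactly the tool already used in the proofs of Theorems \ref{T-C4} and \ref{T-C07}. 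Applying it to $tI_M\leqslant aI_M$ yields $t\leqslant a$, i.e.\ $(aI_M:I_M)\leqslant a$.

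Combining the two inequalities gives $(aI_M:I_M)=a$, hence $((\delta_1(N)):I_M)=\sqrt{N:I_M}$. I expect the cancellation step to be the only genuine obstacle: the forward inclusion is purely formal, while the entire weight of the hypotheses (faithfulness, the multiplication property, and above all the compactness of $I_M$) is concentrated in passing from $tI_M\leqslant aI_M$ back to $t\leqslant a$ through Theorem 5 of \cite{CT}. It is worth noting that this argument never uses the radical nature of $a$, so in fact $(aI_M:I_M)=a$ holds for every $a\in L$; the element $N$ enters only to specialize $a$ to $\sqrt{N:I_M}$.
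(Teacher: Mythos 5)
Your proposal is correct and follows essentially the same route as the paper: both reduce the claim to comparing $((\delta_1(N)):I_M)I_M$ with $(\sqrt{N:I_M})I_M$ and both place the entire weight on the cancellation supplied by Theorem 5 of \cite{CT} via the compactness of $I_M$. The only cosmetic difference is that you derive the two inequalities for $(aI_M:I_M)=a$ directly from the definition of the residual, whereas the paper cites the multiplication-module representation $\delta_1(N)=((\delta_1(N)):I_M)I_M$ before cancelling.
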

        \begin{proof}
        Since $M$ is a multiplication $L$-module, $\delta_1(N)=((\delta_1(N)):I_M)I_M$ and hence $((\delta_1(N)):I_M)I_M=(\sqrt{N:I_M})I_M$. Since $I_M$ is compact, by Theorem 5 of \cite{CT}, we have, $((\delta_1(N)):I_M)=\sqrt{N:I_M}$.
        \end{proof}

                \begin{l1}\label{L-C93}
               For every $q_i\in L\ (i\in Z_+)$ we have $\underset{i\in Z_+}{\wedge}\sqrt{q_i}=\sqrt{\underset{i\in Z_+}{\wedge}q_i}$.
               \end{l1}
               \begin{proof}
               The proof is obvious.
               \end{proof}  
                
                \begin{thm}
                Let $L$ be a PG-lattice and $M$ be a faithful multiplication PG-lattice $L$ module with $I_M$ compact.  Then given an expansion function $\delta_1$ on $M$,   $\underset{i\in Z_+}{\wedge}(\delta_1(N_i))=\delta_1({\underset{i\in  Z_+}{\wedge}N_i)}$ where $\{N_i\in M \mid i\in Z_+\}$.
               \end{thm}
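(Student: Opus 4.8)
The plan is to reduce the identity to three building blocks that are already available: Lemma~\ref{L-C91}, which factors $I_M$ out of an arbitrary meet; Lemma~\ref{L-C93}, which commutes the radical with a countable meet; and the standard residuation identity $\big(\underset{i\in Z_+}{\wedge}N_i:I_M\big)=\underset{i\in Z_+}{\wedge}(N_i:I_M)$. Recalling that $\delta_1(A)=(\sqrt{A:I_M})I_M$, I would begin from the left-hand side and rewrite it as $\underset{i\in Z_+}{\wedge}\delta_1(N_i)=\underset{i\in Z_+}{\wedge}\big[(\sqrt{N_i:I_M})I_M\big]$, then transform the right-hand side of each application of the three tools until the expression collapses to $\delta_1\big(\underset{i\in Z_+}{\wedge}N_i\big)$.

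First I would apply Lemma~\ref{L-C91} with $a_i=\sqrt{N_i:I_M}$ to pull the common factor $I_M$ outside the meet, obtaining $\underset{i\in Z_+}{\wedge}\big[(\sqrt{N_i:I_M})I_M\big]=\big(\underset{i\in Z_+}{\wedge}\sqrt{N_i:I_M}\big)I_M$. Next I would invoke Lemma~\ref{L-C93} with $q_i=(N_i:I_M)$ to interchange the meet and the radical, giving $\underset{i\in Z_+}{\wedge}\sqrt{N_i:I_M}=\sqrt{\underset{i\in Z_+}{\wedge}(N_i:I_M)}$. Substituting this inside the previous display leaves $\big(\sqrt{\underset{i\in Z_+}{\wedge}(N_i:I_M)}\big)I_M$.

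The remaining step is the residuation identity $\underset{i\in Z_+}{\wedge}(N_i:I_M)=\big(\underset{i\in Z_+}{\wedge}N_i:I_M\big)$, the countable analogue of property~(2.17) in \cite{J}. Both inequalities are immediate from the definition $(A:B)=\vee\{x\in L\mid xB\leqslant A\}$: the inequality $\big(\underset{i\in Z_+}{\wedge}N_i:I_M\big)\leqslant \underset{i\in Z_+}{\wedge}(N_i:I_M)$ holds because residuation is order preserving in its first argument, while the reverse follows because $xI_M\leqslant N_i$ for every $i$ forces $xI_M\leqslant \underset{i\in Z_+}{\wedge}N_i$. Feeding this in yields $\underset{i\in Z_+}{\wedge}\delta_1(N_i)=\big(\sqrt{\underset{i\in Z_+}{\wedge}N_i:I_M}\big)I_M=\delta_1\big(\underset{i\in Z_+}{\wedge}N_i\big)$, which is exactly the claimed equality.

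I do not expect a genuine obstacle, since every ingredient is at hand; the only points requiring care are making sure Lemma~\ref{L-C91} and Lemma~\ref{L-C93} are applied over the same countable index set $Z_+$, and justifying the residuation identity for an infinite meet rather than merely the finite case cited from \cite{J}. The hypotheses that $L$ be a PG-lattice and $M$ a faithful multiplication PG-lattice are needed precisely so that Lemma~\ref{L-C91} is in force.
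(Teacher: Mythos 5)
Your proposal is correct and follows essentially the same route as the paper: the paper's proof is precisely the chain Lemma~\ref{L-C91}, Lemma~\ref{L-C93}, and the residuation identity $\big(\underset{i\in Z_+}{\wedge}N_i:I_M\big)=\underset{i\in Z_+}{\wedge}(N_i:I_M)$, just read starting from $\delta_1\big(\underset{i\in Z_+}{\wedge}N_i\big)$ rather than from the meet of the $\delta_1(N_i)$. Your explicit justification of the infinite residuation identity is a small bonus the paper leaves implicit.
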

                \begin{proof}
                By Lemma $\ref{L-C93}$ and Lemma $\ref{L-C91}$, we have $\delta_1({\underset{i\in  Z_+}{\wedge}N_i)}=(\sqrt{({\underset{i\in  Z_+}{\wedge}N_i)}:I_M}
                \ )I_M=$\\$(\sqrt{{\underset{i\in  Z_+}{\wedge}}(N_i:I_M)}\ )I_M=$$(\underset{i\in Z_+}{\wedge}\sqrt{N_i:I_M})I_M$=$\underset{i\in Z_+}{\wedge}((\sqrt{N_i:I_M})I_M)=\underset{i\in Z_+}{\wedge}(\delta_1(N_i))$.
                \end{proof}
        
        Now we relate the element $\delta_1(N)\in M$ with $rad(N)\in M$,the radical element of $M$ where $N\in M$. According to definition 3.1 in \cite{MB1}, the radical of a proper element $N$ in an $L$ module $M$ is defined as $\wedge \{P\in M \mid$ $P$ is a prime element and $N\leqslant P \}$ and is denoted as $rad(N)$. 
        
        \begin{thm}
         Given an expansion function $\delta_1$ on a multiplication $L$-module $M$, $\delta_1(N)\leqslant rad(N)$ for every proper element $N\in M$.
        \end{thm}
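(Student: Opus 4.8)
The claim is that $\delta_1(N) \leqslant rad(N)$ for every proper element $N$ of a multiplication $L$-module $M$, where $\delta_1(N) = (\sqrt{N:I_M})I_M$ and $rad(N) = \wedge\{P \in M \mid P \text{ is prime and } N \leqslant P\}$. The plan is to show that $\delta_1(N)$ lies below each prime element $P$ containing $N$, so that it lies below their meet. Since the right-hand side is a meet over prime elements $P$ with $N \leqslant P$, it suffices to fix an arbitrary such prime $P$ and prove $\delta_1(N) \leqslant P$; taking the meet over all such $P$ then yields the result.

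So fix a prime element $P \in M$ with $N \leqslant P$. The natural route is to pass to $L$ via the residuation $(\;\cdot\;:I_M)$. First I would observe that by Proposition 3.6 of \cite{A}, since $P$ is prime in $M$, the element $(P:I_M) \in L$ is prime. Next, $N \leqslant P$ gives $(N:I_M) \leqslant (P:I_M)$, and since $(P:I_M)$ is prime it equals its own radical, so applying the radical operator yields $\sqrt{N:I_M} \leqslant \sqrt{(P:I_M)} = (P:I_M)$. Multiplying both sides by $I_M$ and using that $M$ is a multiplication module (so $P = (P:I_M)I_M$), I obtain
\[
\delta_1(N) = (\sqrt{N:I_M})I_M \leqslant (P:I_M)I_M = P.
\]

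The one step deserving care is the passage $\sqrt{N:I_M} \leqslant (P:I_M) \implies (\sqrt{N:I_M})I_M \leqslant (P:I_M)I_M$, which is just monotonicity of multiplication by $I_M$, together with the identity $P = (P:I_M)I_M$ valid because $M$ is a multiplication lattice module. I expect the main obstacle, if any, to be purely bookkeeping: confirming that a prime element is its own radical in $L$ (that $\sqrt{(P:I_M)} = (P:I_M)$ for a prime $(P:I_M)$) and that the residuation and multiplication interact as stated. Once $\delta_1(N) \leqslant P$ is established for every prime $P \supseteq N$, taking the infimum over all such $P$ gives $\delta_1(N) \leqslant \wedge\{P \mid P \text{ prime},\, N \leqslant P\} = rad(N)$, completing the argument.
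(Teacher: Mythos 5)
Your argument is correct. Each step checks out: $(P:I_M)$ is proper whenever $P$ is (else $I_M\leqslant P$), Proposition 3.6 of \cite{A} gives that it is prime, a prime element of $L$ equals its own radical (by induction using $x\cdot x^{n-1}\leqslant p$), the radical operator and multiplication by $I_M$ are monotone, and the empty-meet case (no primes above $N$) gives $rad(N)=I_M$ trivially. One small remark: you do not actually need $P=(P:I_M)I_M$, and hence do not need the multiplication hypothesis at this point, since $(P:I_M)I_M\leqslant P$ holds in any $L$-module by the definition of residuation; the multiplication hypothesis is really only needed so that $\delta_1$ is an expansion function in the first place. The paper itself offers no argument here --- it disposes of the statement with a one-line citation to Lemma 3.5 of \cite{MB1}, which packages exactly the fact you prove (that $(\sqrt{N:I_M})I_M$ sits below every prime above $N$). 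Your version is self-contained and makes visible where each hypothesis is used, which the paper's citation does not.
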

        \begin{proof}
         The proof follows from Lemma 3.5 in \cite{MB1}.
        \end{proof}
        
        \begin{thm}\label{T-C11}
        Let $L$ be a PG-lattice and $M$ be a faithful multiplication PG-lattice $L$-module with $I_M$ compact. Then given an expansion function $\delta_1$ on $M$,   $\delta_1(N)=rad(N)$ for every proper element $N\in M$.
        \end{thm}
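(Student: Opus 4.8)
The plan is to establish $\delta_1(N)=rad(N)$ by proving the two inequalities separately. The inequality $\delta_1(N)\leqslant rad(N)$ is exactly the content of the preceding theorem, so it remains only to prove $rad(N)\leqslant\delta_1(N)$. Since $M$ is a multiplication module, I would rewrite $\delta_1(N)=(\sqrt{N:I_M})I_M$ and recall the standard fact in the compactly generated multiplicative lattice $L$ that $\sqrt{N:I_M}=\wedge\{p\in L\mid p \text{ is prime and } (N:I_M)\leqslant p\}$. By Lemma \ref{L-C91} this gives $\delta_1(N)=\wedge\{pI_M\mid p \text{ prime}, (N:I_M)\leqslant p\}$. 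Thus the target inequality will follow once I show that each such $pI_M$ is a prime element of $M$ lying above $N$, for then $rad(N)$, being the meet of all primes above $N$, is bounded above by the meet of this particular subfamily.

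The key step, and the main obstacle, is the claim that if $p\in L$ is a proper prime element then $pI_M$ is a proper prime element of $M$. First I note that $pI_M<I_M$: if $pI_M=I_M=1I_M$ then, since $I_M$ is compact and $M$ is faithful multiplication, Theorem 5 of \cite{CT} forces $p=1$, contradicting properness of $p$. For primeness, suppose $aA\leqslant pI_M$ with $A\nleqslant pI_M$ for $a\in L$, $A\in M$; writing $A=bI_M$ (possible as $M$ is multiplication) gives $abI_M\leqslant pI_M$, whence $ab\leqslant p$ again by Theorem 5 of \cite{CT}. As $p$ is prime, either $a\leqslant p$ or $b\leqslant p$; the latter would give $A=bI_M\leqslant pI_M$, a contradiction, so $a\leqslant p$ and therefore $aI_M\leqslant pI_M$. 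Hence $pI_M$ is a prime element of $M$. Moreover, whenever $(N:I_M)\leqslant p$ we have $N=(N:I_M)I_M\leqslant pI_M$, so $pI_M$ is indeed one of the prime elements of $M$ lying above $N$.

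Assembling these facts, $rad(N)\leqslant\wedge\{pI_M\mid p \text{ prime}, (N:I_M)\leqslant p\}=\delta_1(N)$, which together with the preceding theorem yields $\delta_1(N)=rad(N)$. I expect the only genuinely delicate points to be the repeated cancellation $xI_M\leqslant yI_M\Rightarrow x\leqslant y$, which relies on faithfulness, the multiplication property, and compactness of $I_M$ through Theorem 5 of \cite{CT}, and the interchange of meet with multiplication by $I_M$, supplied by Lemma \ref{L-C91} (this is precisely where the PG-lattice hypotheses on $L$ and $M$ are consumed). The identification of $\sqrt{N:I_M}$ with the meet of the prime elements of $L$ above $(N:I_M)$ is a standard property of radicals in compactly generated multiplicative lattices and needs no further hypotheses.
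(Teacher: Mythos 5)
Your argument is correct, but it takes a different route from the paper: the paper disposes of this theorem in one line by citing Theorem 3.6 of \cite{MB1}, which asserts precisely that $rad(N)=(\sqrt{N:I_M})I_M$ under these hypotheses, whereas you reconstruct a self-contained proof. Your two halves are both sound. The inequality $\delta_1(N)\leqslant rad(N)$ is indeed the immediately preceding theorem. For the reverse inequality, your chain $\delta_1(N)=(\sqrt{N:I_M})I_M=\bigl(\wedge\{p\mid p\ \text{prime},\ (N:I_M)\leqslant p\}\bigr)I_M=\wedge\{pI_M\mid p\ \text{prime},\ (N:I_M)\leqslant p\}$ uses exactly the standard radical-as-meet-of-primes fact for compactly generated multiplicative lattices with $1$ compact, plus Lemma \ref{L-C91}, and your verification that each such $pI_M$ is a proper prime element of $M$ lying above $N$ correctly isolates where the cancellation $xI_M\leqslant yI_M\Rightarrow x\leqslant y$ (Theorem 5 of \cite{CT}, needing faithfulness, the multiplication property and compactness of $I_M$) is consumed; since $rad(N)$ is the meet over the full family of primes above $N$, it is bounded by the meet over your subfamily. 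What your approach buys is transparency: it makes visible that the hypotheses of the theorem are exactly those needed for Lemma \ref{L-C91} and for the cancellation, and it does not depend on the reader having \cite{MB1} at hand; what the paper's citation buys is brevity and consistency with its general practice of importing results on $rad(N)$ from \cite{MB1}.
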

        \begin{proof}
         The proof follows from Theorem 3.6 in \cite{MB1}.
        \end{proof}

        \begin{thm}\label{T-C12}
        Let $L$ be a PG-lattice and $M$ be a faithful multiplication PG-lattice $L$-module with $I_M$ compact. Let $\delta_1$ be an expansion function on $M$. If a proper element $N\in M$ is 2-absorbing then $\delta_1(N)$ is a 2-absorbing element of $M$ and hence a $(3,2)$-absorbing element of $M$. Further, $\delta_1(N)$ is a 2-absorbing primary element of $M$.
        \end{thm}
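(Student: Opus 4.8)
The plan is to reduce the statement about $\delta_1(N)\in M$ to the already-established fact that $\sqrt{N:I_M}$ is a $2$-absorbing element of $L$, and then transport that property back up to $M$ through the multiplication structure. Writing $q=\sqrt{N:I_M}$, recall that $\delta_1(N)=(\sqrt{N:I_M})I_M=qI_M$ and, by Theorem \ref{T-C13}, $(\delta_1(N):I_M)=q$. Since $N$ is $2$-absorbing in $M$, Theorem \ref{T-C06} yields that $q$ is a $2$-absorbing (hence proper) element of $L$; as $q<1$ and $I_M$ is compact, Theorem 5 of \cite{CT} forces $qI_M<I_M$, so $\delta_1(N)$ is a proper element of $M$.

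Next I would verify the $2$-absorbing condition directly. Suppose $abX\leqslant\delta_1(N)$ for $a,b\in L$ and $X\in M$. Because $M$ is a multiplication module, $X=cI_M$ for some $c\in L$, so $abcI_M\leqslant qI_M$; the compactness of $I_M$ together with Theorem 5 of \cite{CT} then gives $abc\leqslant q$. Applying the $2$-absorbing property of $q$ in $L$, one of the three alternatives $ab\leqslant q$, $bc\leqslant q$, $ca\leqslant q$ must hold. Translating each alternative back into $M$ by multiplying through by $I_M$ gives respectively $ab\leqslant(\delta_1(N):I_M)$, or $bX=bcI_M\leqslant qI_M=\delta_1(N)$, or $aX=acI_M\leqslant\delta_1(N)$, which is precisely the defining condition for $\delta_1(N)$ to be a $2$-absorbing element of $M$.

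Once $\delta_1(N)$ is known to be $2$-absorbing in $M$, the remaining two assertions follow cheaply: being $2$-absorbing it is $(3,2)$-absorbing by the general hierarchy of $(n,m)$-absorbing elements, and since $M$ is a multiplication module, Theorem \ref{T-C01} immediately upgrades the $2$-absorbing element $\delta_1(N)$ to a $2$-absorbing primary element of $M$.

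I expect the main obstacle to be the back-and-forth translation between inequalities in $L$ and inequalities in $M$: one must be careful that $abcI_M\leqslant qI_M$ genuinely yields $abc\leqslant q$ (this is exactly where faithfulness and the compactness of $I_M$, via Theorem 5 of \cite{CT}, are indispensable) and that the three $2$-absorbing alternatives in $L$ are matched with the correct three alternatives in the module-theoretic definition. The $(3,2)$-absorbing clause is the only genuinely soft point, resting on the standard inclusion among the $(n,m)$-absorbing classes rather than on any computation.
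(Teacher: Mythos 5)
Your proof is correct, but it follows a genuinely different path from the paper's. The paper's own proof is a one-line citation chain: it identifies $\delta_1(N)$ with $rad(N)$ via Theorem \ref{T-C11} and then imports Theorem 3.9 of \cite{MB1} (the radical of a $2$-absorbing element is $2$-absorbing) together with Theorem 2.18 of \cite{MB1} for the $(3,2)$-absorbing clause, finishing with Theorem \ref{T-C01} exactly as you do. You instead bypass $rad(N)$ entirely: you descend to $L$ via Theorem \ref{T-C06} to get that $q=\sqrt{N:I_M}$ is $2$-absorbing, and then push that property back up to $qI_M=\delta_1(N)$ by hand, using the multiplication structure, Theorem \ref{T-C13}, and Theorem 5 of \cite{CT} to convert $abcI_M\leqslant qI_M$ into $abc\leqslant q$ and to match the three alternatives. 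Your route is self-contained within the present paper (apart from the standard \cite{CT} lemma) and makes the transfer mechanism between $M$ and $L$ explicit, at the cost of being longer; the paper's route is shorter but rests on two external results from \cite{MB1}. Both arguments use the same hypotheses, and both treat the $(3,2)$-absorbing clause as a citation-level fact --- that notion is never defined in this paper, so your appeal to the general $(n,m)$-absorbing hierarchy is no weaker than the paper's appeal to \cite{MB1}.
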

        \begin{proof}
        The proof follows from Theorem 3.9, Theorem 2.18 in \cite{MB1} and above Theorems \ref{T-C11}, ~\ref{T-C01}.
        \end{proof} 
        
        \begin{thm}
        Let $L$ be a PG-lattice and $M$ be a faithful multiplication PG-lattice $L$-module with $I_M$ compact. If a proper element $N\in M$ is  $2$-absorbing then one of the following statements hold true:
        \begin{enumerate}
        \item $\delta_1(N)=pI_M$ is a prime element of $M$ such that $p^2I_M\leqslant N$.
        \item $\delta_1(N)=p_1I_M\wedge p_2I_M$ and $(p_1p_2)I_M\leqslant N$ where $p_1I_M$ and $p_2I_M$ are the only distinct prime elements of $M$ that are minimal over $N$.
        \end{enumerate}
        \end{thm}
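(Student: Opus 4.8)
The plan is to pass from the element $N\in M$ to the element $q=(N:I_M)\in L$, apply the known structure theorem for $2$-absorbing elements in the multiplicative lattice $L$, and then transport the conclusion back to $M$ using the identity $\delta_1(N)=(\sqrt{N:I_M})I_M=(\sqrt{q})I_M$ together with Lemma \ref{L-C91} and Theorem \ref{T-C11}. Throughout I would keep in mind that, since $M$ is a multiplication module, $N=(N:I_M)I_M=qI_M$.

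First I would observe that, since $N\in M$ is $2$-absorbing, Corollary 2.13 of \cite{MB1} yields that $q=(N:I_M)$ is a $2$-absorbing element of $L$. The lattice analogue of Badawi's structure theorem for $2$-absorbing elements (cf.\ \cite{JTY}, \cite{CYT}) then splits into two cases: either (a) $\sqrt{q}=p$ is a prime element of $L$ with $p^2\leqslant q$, or (b) $\sqrt{q}=p_1\wedge p_2$, where $p_1,p_2$ are the only two distinct prime elements of $L$ minimal over $q$ and $p_1p_2\leqslant q$. In case (a) I would multiply $p^2\leqslant q$ by $I_M$ to obtain $p^2I_M\leqslant qI_M=N$, and note $\delta_1(N)=(\sqrt{q})I_M=pI_M$; since $\sqrt{q}=p$ is prime, $p$ is the unique prime of $L$ minimal over $q$, so $pI_M$ is the unique prime of $M$ minimal over $N$, and by Theorem \ref{T-C11} the element $\delta_1(N)=rad(N)$, being the meet of all primes over $N$, reduces to the single prime $pI_M$. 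This gives statement (1). In case (b) I would use Lemma \ref{L-C91} to write $\delta_1(N)=(p_1\wedge p_2)I_M=p_1I_M\wedge p_2I_M$, multiply $p_1p_2\leqslant q$ by $I_M$ to get $(p_1p_2)I_M\leqslant N$, and transport minimality across the correspondence so that $p_1I_M$ and $p_2I_M$ are precisely the distinct primes of $M$ minimal over $N$. This gives statement (2).

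The main obstacle will be the prime-correspondence step, namely establishing that the assignment $p\mapsto pI_M$ carries the primes of $L$ minimal over $q$ bijectively onto the primes of $M$ minimal over $N$, and in particular that each $pI_M$ (resp.\ $p_iI_M$) is a \emph{proper} prime element of $M$. The forward direction $(\,\text{prime in }M\Rightarrow(N:I_M)\text{ prime in }L\,)$ is Proposition 3.6 of \cite{A}; for the reverse direction one leans on the faithful multiplication hypotheses together with $I_M$ compact (which guarantees $pI_M<I_M$ when $p<1$), and on Theorem \ref{T-C11} to identify $\delta_1(N)$ with $rad(N)$ and hence with the meet of the minimal primes over $N$. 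Once this correspondence is in place, the two required containments $p^2I_M\leqslant N$ and $(p_1p_2)I_M\leqslant N$ follow immediately by multiplying the respective relations $p^2\leqslant q$ and $p_1p_2\leqslant q$ in $L$ by $I_M$ and invoking $N=qI_M$.
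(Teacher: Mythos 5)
Your argument is correct in outline, but it follows a genuinely different route from the paper. The paper's proof is a two-line citation: Theorem 3.10 of \cite{MB1} already states this exact dichotomy for $rad(N)$ when $N$ is a $2$-absorbing element of $M$ (under the same hypotheses), and Theorem \ref{T-C11} identifies $\delta_1(N)$ with $rad(N)$, so nothing remains to be done. You instead re-derive the module-level dichotomy from scratch: descend to $q=(N:I_M)$, which is $2$-absorbing in $L$ by Corollary 2.13 of \cite{MB1}, invoke the lattice-level structure theorem of \cite{JTY} for $\sqrt{q}$, and transport back through $\delta_1(N)=(\sqrt{q})I_M$, Lemma \ref{L-C91}, and the prime correspondence $p\mapsto pI_M$. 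What your route buys is independence from Theorem 3.10 of \cite{MB1}; what it costs is that the prime correspondence you correctly flag as the main obstacle --- that $p\mapsto pI_M$ is an order isomorphism between primes of $L$ above $q$ and primes of $M$ above $N$, so that properness, primeness, distinctness, and minimality all transfer --- is precisely the nontrivial content that the cited theorem packages away, and it is where all the hypotheses (faithful, multiplication, PG, $I_M$ compact, via Proposition 3.6 of \cite{A} in one direction and Theorem 5 of \cite{CT} in the other) are actually consumed. Your sketch of that step is plausible and standard for multiplication lattice modules, but as written it is a program rather than a proof; if you intend the argument to be self-contained you should carry it out, and if not, you may as well cite the module-level theorem directly as the paper does.
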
 
        \begin{proof}
        The proof follows from Theorem 3.10 in \cite{MB1} and above Theorem \ref{T-C11}. 
        \end{proof}      
        
        \begin{thm}
         Let $L$ be a PG-lattice and $M$ be a faithful multiplication PG-lattice $L$-module with $I_M$ compact. Let $\delta_1$ be an expansion function on $M$. Let $\{N_i\in M \mid i\in Z_+\}$ be 2-absorbing elements of $M$.  If $\{\delta_1(N_i)\in M \mid i\in Z_+\}$ is (ascending or descending) chain of elements of $M$ then \\               
         \textcircled{1} $\underset{i\in Z_+}{\wedge}(\delta_1(N_i))$ is a $2$-absorbing and hence a 2-absorbing primary element of $M$.\\ \textcircled{2} $\underset{i\in Z_+}{\vee}(\delta_1(N_i))$ is a $2$-absorbing and hence a 2-absorbing primary element of $M$. 
        \end{thm}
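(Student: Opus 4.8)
The plan is to reduce the statement to the assertion that the meet and the join of a chain of 2-absorbing elements of $M$ are again 2-absorbing. Since each $N_i$ is 2-absorbing, Theorem \ref{T-C12} shows that each $Q_i:=\delta_1(N_i)$ is a 2-absorbing element of $M$, and by hypothesis $\{Q_i\mid i\in Z_+\}$ is a chain; the closing ``hence 2-absorbing primary'' clauses then follow at once from Theorem \ref{T-C01}, as $M$ is a multiplication module. Two of the four cases are trivial: if the chain is ascending, $Q_1\leqslant Q_2\leqslant\cdots$, then $\underset{i}{\wedge}Q_i=Q_1$ is 2-absorbing, and dually if the chain is descending then $\underset{i}{\vee}Q_i$ is the top term $Q_1$ of the chain and is 2-absorbing. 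In the two remaining cases properness of the relevant join is secured by compactness of $I_M$: were $\underset{i}{\vee}Q_i=I_M$, then $I_M\leqslant Q_m$ for some $m$ since $I_M$ is compact and the $Q_i$ form a chain, contradicting $Q_m<I_M$.

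For the meet of a descending chain $Q_1\geqslant Q_2\geqslant\cdots$ I would put $Q=\underset{i}{\wedge}Q_i$ and suppose $abN\leqslant Q$ for $a,b\in L$ and $N\in M$, so that $abN\leqslant Q_i$ for every $i$. For each $i$ the 2-absorbing property of $Q_i$ places $i$ in at least one of $A=\{i\mid abI_M\leqslant Q_i\}$, $B=\{i\mid bN\leqslant Q_i\}$, $C=\{i\mid aN\leqslant Q_i\}$. Because the chain is descending, each of $A,B,C$ is an initial segment of $Z_+$, and since their union is all of $Z_+$, which cannot be covered by three bounded initial segments, one of them equals $Z_+$. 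If $A=Z_+$ then $abI_M\leqslant Q_i$ for all $i$, hence $abI_M\leqslant Q$; likewise $B=Z_+$ gives $bN\leqslant Q$ and $C=Z_+$ gives $aN\leqslant Q$. Thus $Q$ is 2-absorbing, which settles \textcircled{1}.

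For the join of an ascending chain $Q_1\leqslant Q_2\leqslant\cdots$ I would put $Q=\underset{i}{\vee}Q_i$ (proper, as noted above). Since $M$ is a multiplication PG-lattice $L$-module with $I_M$ compact, the 2-absorbing condition need only be verified for compact $a,b\in L_\ast$ and $N\in M_\ast$, by a compact-generation argument analogous to the ones used in the characterization theorems above. For such elements $abN$ is compact, so $abN\leqslant\underset{i}{\vee}Q_i$ together with the chain condition gives $abN\leqslant Q_m$ for a single $m$; as $Q_m$ is 2-absorbing, one of $abI_M\leqslant Q_m$, $bN\leqslant Q_m$, $aN\leqslant Q_m$ holds, and each implies the corresponding inequality with $Q$ in place of $Q_m$ because $Q_m\leqslant Q$. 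Hence $Q$ is 2-absorbing, settling \textcircled{2}. The main obstacle is precisely this localization step: collapsing $abN\leqslant\underset{i}{\vee}Q_i$ to a single chain term genuinely requires compactness, so one must first pass to compact test elements before the chain can be exploited; by contrast, the meet case needs no compactness but hinges on the observation that $Z_+$ cannot be covered by finitely many bounded initial segments.
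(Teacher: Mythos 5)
Your reduction is the same as the paper's first step: apply Theorem~\ref{T-C12} to replace each $\delta_1(N_i)$ by a 2-absorbing element $Q_i$, and get the ``2-absorbing primary'' clauses from Theorem~\ref{T-C01}. Where you diverge is that the paper then simply cites Theorem~2.9 of \cite{MB1} for the fact that the meet and join of a chain of 2-absorbing elements are again 2-absorbing, whereas you prove that chain lemma from scratch. Your argument for the meet of a descending chain is clean and fully elementary: the three sets $A$, $B$, $C$ are initial segments of $Z_+$ because the chain is descending, and $Z_+$ cannot be covered by three proper (hence bounded) initial segments, so one of them is all of $Z_+$; together with the trivial ascending case this settles \textcircled{1} with no appeal to compactness. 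This is a genuine gain in self-containedness over the citation.

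The join case is where your write-up still leans on more than the stated hypotheses. Collapsing $abN\leqslant\underset{i}{\vee}Q_i$ to $abN\leqslant Q_m$ needs $abN$ to be compact, and reducing the 2-absorbing test to compact $a,b\in L_\ast$, $N\in M_\ast$ needs $M$ to be compactly generated so that $abI_M\nleqslant Q$, $aN\nleqslant Q$, $bN\nleqslant Q$ can each be witnessed by compact parts (which you then join, exactly as in the $(3)\Rightarrow(1)$ arguments of the characterization theorems). The theorem as stated assumes $M$ is a PG-lattice module, not a CG-lattice module, and the paper's blanket assumption that finite products of compacts are compact is made only for $L$, not for the action of $L_\ast$ on $M_\ast$. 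These are standard background assumptions in this literature and the paper itself is loose about them (its own characterization theorems add the CG hypothesis explicitly), so I would not call this a fatal gap, but you should state the two auxiliary facts you are using --- $M$ is CG and $rY$ is compact for $r\in L_\ast$, $Y\in M_\ast$ --- rather than waving at ``an analogous compact-generation argument.'' Your observation that compactness of $I_M$ is what keeps $\underset{i}{\vee}Q_i$ proper is correct and is a point the paper leaves entirely to the cited reference.
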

        \begin{proof}
        The proof follows from Theorem 2.9 in \cite{MB1} and above Theorem \ref{T-C12}. 
        \end{proof}
        
        \begin{thm}
                Let $L$ be a PG-lattice and $M$ be a faithful multiplication PG-lattice $L$-module with $I_M$ compact. Let $\delta_1$ be an expansion function on $M$.
                If $N$ is a proper element of an $L$-module $M$ then $(\sqrt{N:K})K\leqslant \delta_1(N)$ for every proper element $K\in M$ such that $K\nleqslant N$.
                \end{thm}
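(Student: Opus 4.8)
The plan is to reduce the asserted inequality in $M$ to an inequality between radicals in $L$, which can then be checked on compact generators. Since $L$ is compactly generated and the module action distributes over joins (property \textcircled{1}), writing $\sqrt{N:K}=\vee\{r\in L_\ast \mid r^n\leqslant (N:K) \text{ for some } n\in Z_+\}$ gives $(\sqrt{N:K})K=\underset{r}{\vee}(rK)$, so it suffices to prove $rK\leqslant\delta_1(N)$ for each compact $r\in L_\ast$ with $r^n\leqslant (N:K)$. Because $M$ is a multiplication module I would fix the representation $K=cI_M$ for some $c\in L$ and recall $N=(N:I_M)I_M$ together with $\delta_1(N)=(\sqrt{N:I_M})I_M$; the goal then becomes $rcI_M\leqslant(\sqrt{N:I_M})I_M$, for which it is enough to show $rc\leqslant\sqrt{N:I_M}$.

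The central computation starts from $r^n\leqslant(N:K)$, i.e. $r^nK\leqslant N$, which reads $r^n c I_M\leqslant (N:I_M)I_M$. Here I would invoke Theorem 5 of \cite{CT} --- this is exactly where faithfulness of $M$ and compactness of $I_M$ enter --- to cancel $I_M$ and obtain $r^n c\leqslant (N:I_M)$. Setting $d=(N:I_M)$, I next observe $(rc)^n=r^n c^n\leqslant r^n c\leqslant d$, using that $c\leqslant 1$ forces $c^n\leqslant c$ and that multiplication is order preserving. Thus $(rc)^n\leqslant d$.

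The one genuine obstacle is that $rc$ need not be compact, so $(rc)^n\leqslant d$ does not immediately place $rc$ below $\sqrt{d}$, which by definition is a join over \emph{compact} elements only. To get around this I would use compact generation of $L$ to write $c=\vee\{c_j\in L_\ast \mid c_j\leqslant c\}$, so that $rc=\underset{j}{\vee}(rc_j)$. Each $rc_j$ is a product of two compact elements, hence compact by the standing hypothesis that finite products of compact elements are compact, and $(rc_j)^n=r^n c_j^{\,n}\leqslant r^n c\leqslant d$ since $c_j^{\,n}\leqslant c_j\leqslant c$. Therefore $rc_j\leqslant\sqrt{d}$ for every $j$, whence $rc=\underset{j}{\vee}(rc_j)\leqslant\sqrt{d}=\sqrt{N:I_M}$. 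Multiplying by $I_M$ gives $rK=rcI_M\leqslant(\sqrt{N:I_M})I_M=\delta_1(N)$, and taking the join over all admissible $r$ yields $(\sqrt{N:K})K\leqslant\delta_1(N)$. The hypothesis $K\nleqslant N$ is not actually needed for the inequality itself (if $K\leqslant N$ then $(N:K)=1$ and the bound is immediate), so I would simply carry it along as stated.
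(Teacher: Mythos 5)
Your proof is correct, but it takes a genuinely different route from the paper's. The paper argues through the radical element: for any prime $P\in M$ with $N\leqslant P$ it notes that $(P:K)$ is a prime element of $L$ lying above $\sqrt{N:K}$, hence $(\sqrt{N:K})K\leqslant (P:K)K\leqslant P$; taking the meet over all such primes gives $(\sqrt{N:K})K\leqslant rad(N)$, and Theorem \ref{T-C11} (the identity $\delta_1(N)=rad(N)$, imported from \cite{MB1}) finishes the job. You instead compute directly on compact generators, reducing everything to the definition of the radical in $L$: writing $K=cI_M$ and $\sqrt{N:K}$ as a join of compact $r$ with $r^nK\leqslant N$, you obtain $(rc_j)^n\leqslant (N:I_M)$ for the compact pieces $c_j$ of $c$ and conclude $rc\leqslant\sqrt{N:I_M}$. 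Two remarks on what each approach buys. First, your appeal to Theorem 5 of \cite{CT} is superfluous: $r^ncI_M\leqslant N$ already gives $r^nc\leqslant (N:I_M)$ from the definition of the residual, so faithfulness of $M$, compactness of $I_M$ and the PG hypotheses are never actually used in your argument; it therefore establishes the inequality for an arbitrary multiplication $L$-module (under the paper's standing hypotheses on $L$), which is more general than the paper's proof, whose reliance on $\delta_1(N)=rad(N)$ forces the full hypothesis set. The paper's route, by contrast, is shorter given the machinery already in place and makes the geometric content ($(\sqrt{N:K})K$ lies under every prime over $N$) visible. Second, your handling of the non-compactness of $rc$ --- decomposing $c$ into compact elements and using that finite products of compact elements are compact --- is exactly the point most likely to be glossed over, and you treat it correctly; your observation that the hypothesis $K\nleqslant N$ is dispensable is also right.
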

                \begin{proof}
                Let $K\in M$ be a proper element such that $K\nleqslant N$. So $(N:K)\neq 1$. Let $P\in M$ be prime such that $N\leqslant P$. Then $(P:K)\in L$ is prime such that $\sqrt{N:K}\leqslant (P:K)$ which implies $(\sqrt{N:K})K\leqslant (P:K)K\leqslant P$. Thus whenever $P\in M$ is prime such that $N\leqslant P$, we have $(\sqrt{N:K})K\leqslant P$. It follows that $(\sqrt{N:K})K\leqslant rad(N)$ which implies $(\sqrt{N:K})K\leqslant \delta_1(N)$ by Theorem \ref{T-C11}.
                \end{proof}            
        
      \section{Expansion function $\delta_L$ on $L$ and $\delta_L$-primary element of $M$}
       
       We have studied expansion function on $L$ in \cite{MB}.
       Now given an expansion function $\delta_L$ on $L$, we define $\delta_L$-primary element of an $L$-module $M$.
       
       \begin{d1}
       Given an expansion function $\delta_L$ on a multiplicative lattice $L$, a proper element $P$ of an $L$-module $M$ is called {\bf $\delta_L$-primary} if for all $A\in M$ and $a\in L$, $aA\leqslant P$ implies either $A\leqslant P$ or $a\leqslant \delta_L(P:I_M)$.
       \end{d1}
       
       Now we give some characterizations of $\delta_L$-primary element of an $L$-module $M$.
               
               \begin{thm}
               Let $M$ be a CG-lattice $L$-module, $N\in M$ be a proper element and $\delta_L$ be an expansion function on $L$. Then the following statements are equivalent:-
                \begin{enumerate}
                 \item $N$ is a $\delta_L$-primary element of $M$.
                 \item for every $r\in L$ such that $r\nleqslant \delta_L(N:I_M)$, we have $(N:r)=N$.
                 \item for every $r\in L_\ast$, $A\in M_\ast$ , if $rA\leqslant N$ then either $A\leqslant N$ or $r\leqslant \delta_L(N:I_M)$.
                 \end{enumerate} 
                 \end{thm}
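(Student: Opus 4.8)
The plan is to establish the cycle $(1)\Rightarrow(2)\Rightarrow(3)\Rightarrow(1)$, paralleling the proof of the earlier characterization theorem for $\delta$-primary elements, with the single systematic change that the clause $aI_M\leqslant\delta(N)$ there is replaced throughout by $a\leqslant\delta_L(N:I_M)$. The two residuation facts I will lean on are that $N\leqslant(N:r)$ always holds, since $r\leqslant 1$ forces $rN\leqslant 1N=N$, and that $r(N:r)\leqslant N$ by the very definition of the residual $(N:r)=\vee\{X\in M\mid rX\leqslant N\}$. A pleasant feature of the $\delta_L$-setting is that the conclusion of the $\delta_L$-primary condition is already an inequality in $L$, namely $a\leqslant\delta_L(N:I_M)$, so unlike the $\delta$ case there is no final conversion back to an inequality in $M$.

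For $(1)\Rightarrow(2)$ I fix $r\in L$ with $r\nleqslant\delta_L(N:I_M)$. Taking any $Q\leqslant(N:r)$ gives $rQ\leqslant N$, and since $N$ is $\delta_L$-primary while $r\nleqslant\delta_L(N:I_M)$, the definition forces $Q\leqslant N$; hence $(N:r)\leqslant N$, and together with $N\leqslant(N:r)$ this yields $(N:r)=N$. For $(2)\Rightarrow(3)$ I take $r\in L_\ast$, $A\in M_\ast$ with $rA\leqslant N$ and $A\nleqslant N$, and argue by contradiction: if $r\nleqslant\delta_L(N:I_M)$, then $(2)$ gives $(N:r)=N$, so $rA\leqslant N$ forces $A\leqslant(N:r)=N$, contradicting $A\nleqslant N$; hence $r\leqslant\delta_L(N:I_M)$.

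The step $(3)\Rightarrow(1)$ is where compact generation does the real work, and it is where I expect the main technical obstacle to sit. Given $a\in L$ and $Q\in M$ with $aQ\leqslant N$ and $Q\nleqslant N$, I first invoke that $M$ is a CG-lattice $L$-module to produce a compact $Y'\in M_\ast$ with $Y'\leqslant Q$ and $Y'\nleqslant N$ — such a $Y'$ must exist, for otherwise every compact element below $Q$ would lie under $N$ and then $Q=\vee\{Y\in M_\ast\mid Y\leqslant Q\}\leqslant N$. Then for an arbitrary compact $x\in L_\ast$ with $x\leqslant a$ I pass to the compact elements $x\vee x'$ and $Y\vee Y'$ exactly as in the earlier proof, so that $(x\vee x')(Y\vee Y')\leqslant aQ\leqslant N$ while $Y\vee Y'\nleqslant N$; applying $(3)$ gives $x\vee x'\leqslant\delta_L(N:I_M)$, whence $x\leqslant\delta_L(N:I_M)$.

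Finally, since $L$ is compactly generated, $a=\vee\{x\in L_\ast\mid x\leqslant a\}$, and taking the join over all such $x$ yields $a\leqslant\delta_L(N:I_M)$, which is precisely the $\delta_L$-primary condition. The delicate points to verify carefully are that the reduction to compact $r$ and $A$ in $(3)$ is legitimate precisely because both $L$ and $M$ are compactly generated, and that the passage from the compact approximants back to $a$ is valid because the inequality $x\leqslant\delta_L(N:I_M)$ is preserved under the relevant join. Everything else is routine bookkeeping, so I do not anticipate difficulty beyond this compactness argument.
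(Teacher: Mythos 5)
Your proposal is correct and follows essentially the same route as the paper: the cycle $(1)\Rightarrow(2)\Rightarrow(3)\Rightarrow(1)$ with the same residuation facts for the first two implications and the same compactness argument (a compact $Y'\leqslant Q$ with $Y'\nleqslant N$, arbitrary compact $x\leqslant a$, apply $(3)$ to $x\vee x'$ and $Y\vee Y'$, then take the join over compact $x$) for the last. No gaps.
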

                \begin{proof}
                $(1)\Longrightarrow (2)$. Suppose $(1)$ holds. Let $Q\leqslant (N:r)$ be such that $r\nleqslant \delta_L(N:I_M)$ for $Q\in M, r\in L$. Then $rQ\leqslant N$. As $N$ is a $\delta_L$-primary element of $M$ and  $r\nleqslant \delta_L(N:I_M)$, we have, $Q\leqslant N$ and thus $(N:r)\leqslant N$. Since $rN\leqslant N$ gives $N\leqslant (N:r)$, it follows that $(N:r)=N$.\\
                 $(2)\Longrightarrow (3)$. Suppose $(2)$ holds. Let $rA\leqslant N$ and $r\nleqslant \delta_L(N:I_M)$ for  $r\in L_\ast$, $A\in M_\ast$. Then by $(2)$, we have, $(N:r)=N$. So $rA\leqslant N$ gives $A\leqslant(N:r)=N$.\\
                  $(3)\Longrightarrow (1)$. Suppose $(3)$ holds. Let $aQ\leqslant N$ and $Q\nleqslant N$ for $Q\in M, a\in L$.  As $L$ and $M$ are compactly generated, there exist $x'\in L_\ast$ and $Y,Y'\in M_\ast$ such that $x'\leqslant a, Y\leqslant Q, Y'\leqslant Q$ and $Y'\nleqslant N$. Let $x\in L_\ast$ such that $x\leqslant a$. Then $(x\vee x')\in L_\ast$, $(Y\vee Y')\in M_\ast$ such that $(x\vee x')(Y\vee Y')\leqslant aQ\leqslant N$ and $(Y\vee Y')\nleqslant N$. So by $(3)$, $(x\vee x')\leqslant \delta_L(N:I_M)$ which implies $x\leqslant \delta_L(N:I_M)$. Thus $a\leqslant \delta_L(N:I_M)$ and hence $N$ is a $\delta_L$-primary element of $M$.      
               \end{proof}
               
               \begin{thm}
               Let $M$ be a CG-lattice $L$-module, $N\in M$ be a proper element and $\delta_L$ be an expansion function on $L$. Then the following statements are equivalent:-
                \begin{enumerate}
                \item $N$ is a $\delta_L$-primary element of $M$.
                \item for every $A\in M$ such that $A\nleqslant N$, we have $(N:A)\leqslant \delta_L(N:I_M)$.
                \item for every $r\in L_\ast$, $A\in M_\ast$ , if $rA\leqslant N$ then either $A\leqslant N$ or $r\leqslant \delta_L(N:I_M)$.
                \end{enumerate} 
                \end{thm}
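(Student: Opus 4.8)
The plan is to establish the equivalences by the cyclic chain $(1)\Longrightarrow(2)\Longrightarrow(3)\Longrightarrow(1)$, in complete parallel with the earlier characterization theorem for $\delta$-primary elements of $M$; the only substitution is that the role played there by $(\delta(N):I_M)$ together with the condition $aI_M\leqslant\delta(N)$ is now played by $\delta_L(N:I_M)$ together with the condition $a\leqslant\delta_L(N:I_M)$. For $(1)\Longrightarrow(2)$, I would fix $A\in M$ with $A\nleqslant N$ and take an arbitrary compact $a\in L_\ast$ with $a\leqslant(N:A)$, so that $aA\leqslant N$. Since $N$ is $\delta_L$-primary and $A\nleqslant N$, the definition forces $a\leqslant\delta_L(N:I_M)$. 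As $L$ is compactly generated, $(N:A)$ is the join of the compact elements lying below it, each of which is $\leqslant\delta_L(N:I_M)$, and therefore $(N:A)\leqslant\delta_L(N:I_M)$.

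The implication $(2)\Longrightarrow(3)$ is immediate: given $r\in L_\ast$, $A\in M_\ast$ with $rA\leqslant N$ and $A\nleqslant N$, statement $(2)$ gives $(N:A)\leqslant\delta_L(N:I_M)$, while $rA\leqslant N$ yields $r\leqslant(N:A)$, so $r\leqslant\delta_L(N:I_M)$.

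The substantive step is $(3)\Longrightarrow(1)$, and this is where I expect the only real difficulty, namely the passage from the compact hypotheses available in $(3)$ back to arbitrary $a\in L$ and $Q\in M$ demanded by the definition. Assuming $(3)$, suppose $aQ\leqslant N$ with $a\nleqslant\delta_L(N:I_M)$; the goal is $Q\leqslant N$. Because $L$ is compactly generated and $a\nleqslant\delta_L(N:I_M)$, there exists a compact $x'\leqslant a$ with $x'\nleqslant\delta_L(N:I_M)$ (otherwise $a$, being the join of the compact elements below it, would lie under $\delta_L(N:I_M)$). Now for an arbitrary compact $Y\in M_\ast$ with $Y\leqslant Q$ we have $x'Y\leqslant aQ\leqslant N$, so applying $(3)$ with $r=x'$ and $A=Y$, and noting that $x'\nleqslant\delta_L(N:I_M)$ rules out the second alternative, forces $Y\leqslant N$. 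Since $M$ is compactly generated, $Q$ is the join of all such compact $Y\leqslant Q$, whence $Q\leqslant N$. This verifies the defining condition and shows $N$ is $\delta_L$-primary, closing the cycle. I would remark that the argument is essentially a transcription of the corresponding $\delta$-primary result, so no new ideas are needed beyond respecting that $\delta_L$ acts on $L$ and hence the threshold element is $\delta_L(N:I_M)\in L$ rather than $\delta(N)\in M$.
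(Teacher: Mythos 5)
Your proposal is correct and follows essentially the same route as the paper's own proof: the same cyclic chain $(1)\Rightarrow(2)\Rightarrow(3)\Rightarrow(1)$, with compact generation of $L$ used to reduce $(N:A)\leqslant\delta_L(N:I_M)$ to compact elements and to extract a compact $x'\leqslant a$ with $x'\nleqslant\delta_L(N:I_M)$ in the last step. Your version of $(3)\Rightarrow(1)$ is in fact slightly cleaner, since you apply $(3)$ directly to $x'$ and an arbitrary compact $Y\leqslant Q$ rather than forming the joins $x\vee x'$ and $Y\vee Y'$ as the paper does, but this is a cosmetic simplification rather than a different argument.
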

                \begin{proof}
                $(1)\Longrightarrow (2)$. Suppose $(1)$ holds. Let $A\in M$ be such that $A\nleqslant N$. Let $a\in L_\ast$ be such that $a\leqslant (N:A)$. Then $aA\leqslant N$. As $N$ is a $\delta_L$-primary element of $M$ and  $A\nleqslant N$, we have, $a\leqslant \delta_L(N:I_M)$ and thus $(N:A)\leqslant \delta_L(N:I_M)$.\\
                 $(2)\Longrightarrow (3)$. Suppose $(2)$ holds. Let $rA\in N$ and $A\nleqslant N$ for $r\in L_\ast$, $A\in M_\ast$. Then by $(2)$, we have,  $(N:A)\leqslant \delta_L(N:I_M)$. So $rA\leqslant N$ gives $r\leqslant (N:A)\leqslant \delta_L(N:I_M)$.\\
                  $(3)\Longrightarrow (1)$. Suppose $(3)$ holds. Let $aQ\leqslant N$ and $a\nleqslant \delta_L(N:I_M)$ for $Q\in M, a\in L$.  As $L$ and $M$ are compactly generated, there exist $x', x\in L_\ast$ and $Y\in M_\ast$ such that $x\leqslant a, x'\leqslant a, Y'\leqslant Q$ and $x'\nleqslant \delta_L(N:I_M)$. Let $Y\in M_\ast$ such that $Y\leqslant Q$. Then $(x\vee x')\in L_\ast$, $(Y\vee Y')\in M_\ast$ such that $(x\vee x')(Y\vee Y')\leqslant aQ\leqslant N$ and $(x\vee x')\nleqslant \delta_L(N:I_M)$. So by $(3)$, $(Y\vee Y')\leqslant N$ which implies $Y\leqslant N$. Thus $Q\leqslant N$ and hence $N$ is a $\delta_L$-primary element of $M$.  
                \end{proof} 
       
     In view of Theorems \ref{T-C41},~\ref{T-C4}, \ref{T-C2},~ \ref{T-C90},~\ref{T-C91} and \ref{T-C92}, we have the following results.  
       
        \begin{thm}
         Given an expansion function $\delta_L$ on $L$, a proper element $P$ of an $L$-module $M$ is $(\delta_0)_L$-primary if and only if it is prime.
         \end{thm}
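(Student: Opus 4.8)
The plan is to unravel both definitions and observe that they coincide once one recognizes the standard residuation equivalence $a \leqslant (P:I_M) \iff aI_M \leqslant P$. Recall that $(\delta_0)_L$ is the identity expansion function on $L$, so that $(\delta_0)_L(P:I_M) = (P:I_M)$. Hence, by definition, $P$ is $(\delta_0)_L$-primary precisely when, for all $a \in L$ and $A \in M$, $aA \leqslant P$ implies either $A \leqslant P$ or $a \leqslant (P:I_M)$. On the other hand, $P$ is prime precisely when $aA \leqslant P$ implies either $A \leqslant P$ or $aI_M \leqslant P$. So the whole matter reduces to comparing the two alternatives $a \leqslant (P:I_M)$ and $aI_M \leqslant P$.

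Thus I would first establish the claim that $a \leqslant (P:I_M)$ if and only if $aI_M \leqslant P$. For the less immediate direction, using the distributivity of the module action over joins, one has $(P:I_M)I_M = \left(\vee\{x \in L \mid xI_M \leqslant P\}\right)I_M = \vee\{xI_M \mid xI_M \leqslant P\} \leqslant P$, so if $a \leqslant (P:I_M)$ then $aI_M \leqslant (P:I_M)I_M \leqslant P$. The reverse direction is immediate: if $aI_M \leqslant P$ then $a$ belongs to the set whose join defines $(P:I_M)$, whence $a \leqslant (P:I_M)$.

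With this equivalence in hand, the two defining conditions become verbatim the same statement, and therefore $P$ is $(\delta_0)_L$-primary if and only if $P$ is prime. I expect no genuine obstacle here; the only substantive point is the residuation identity $aI_M \leqslant P \iff a \leqslant (P:I_M)$, and even that is routine given the join-distributive axiom of an $L$-module. This result is the natural counterpart, for the identity expansion on $L$, of Theorem \ref{T-C41}, where the identity expansion $\delta_0$ on $M$ was shown to characterize prime elements of $M$.
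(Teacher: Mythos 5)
Your proposal is correct: it simply makes explicit the unwinding of the two definitions that the paper dismisses as obvious, and the key residuation equivalence $a \leqslant (P:I_M) \iff aI_M \leqslant P$ is established correctly from the join-distributivity axiom of an $L$-module. This matches the paper's (unwritten) reasoning, so there is nothing further to add.
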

         \begin{proof}
         The proof is obvious.
         \end{proof}
                  
         \begin{thm}
         Given an expansion function $\delta_L$ on $L$, a proper element $P$ of an $L$-module $M$ is $(\delta_1)_L$-primary if and only if it is primary.
        \end{thm}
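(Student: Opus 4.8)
The plan is to unwind the definition of a $(\delta_1)_L$-primary element, where $\delta_1$ here denotes the expansion function on $L$ given by $(\delta_1)_L(a)=\sqrt{a}$, and to show directly that this condition coincides with the definition of a primary element of $M$. Recall that $P\in M$ is $(\delta_1)_L$-primary means: for all $a\in L$ and $A\in M$, $aA\leqslant P$ implies either $A\leqslant P$ or $a\leqslant (\delta_1)_L(P:I_M)=\sqrt{(P:I_M)}$. On the other side, $P$ is primary means: $aA\leqslant P$ implies either $A\leqslant P$ or $a^n I_M\leqslant P$ for some $n\in Z_+$.

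First I would prove the forward direction. Assume $P$ is primary and suppose $aA\leqslant P$ with $A\nleqslant P$ for $a\in L$, $A\in M$. By primariness, $a^n I_M\leqslant P$ for some $n\in Z_+$, which gives $a^n\leqslant (P:I_M)$ and hence $a\leqslant \sqrt{(P:I_M)}=(\delta_1)_L(P:I_M)$. Thus $P$ is $(\delta_1)_L$-primary. This implication is essentially immediate from the definition of the radical in $L$ and requires no hypotheses on $M$.

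Next I would prove the converse. Assume $P$ is $(\delta_1)_L$-primary and suppose $aA\leqslant P$ with $A\nleqslant P$. Then $a\leqslant\sqrt{(P:I_M)}$, so by the definition of the radical in $L$ there is some $n\in Z_+$ with $a^n\leqslant (P:I_M)$, which immediately yields $a^n I_M\leqslant P$. Hence $P$ is primary. I would remark that, unlike the module-side expansion function $\delta_1$ treated in Theorem~\ref{T-C4}, this argument works directly in $L$ via the residual $(P:I_M)$ and does not require $I_M$ compact, $M$ faithful, or the multiplication/PG hypotheses, since the radical is taken in $L$ rather than being pushed back through $I_M$.

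I expect no serious obstacle here; the main point to state carefully is simply the equivalence $a\leqslant\sqrt{(P:I_M)}\iff a^n\leqslant(P:I_M)$ for some $n$, which is exactly the compactly-generated definition of $\sqrt{\,\cdot\,}$ in $L$ applied to a single element $a$ (using that $a$ may be taken compact or reduced to compact joins). Both directions are short, so the proof would close by noting that the two characterizations coincide, establishing the stated equivalence.
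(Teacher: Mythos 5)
Your forward direction is fine: from $a^nI_M\leqslant P$ you get $a^n\leqslant (P:I_M)$, and since $L$ is compactly generated every compact $x\leqslant a$ satisfies $x^n\leqslant (P:I_M)$, hence $a\leqslant\sqrt{(P:I_M)}=(\delta_1)_L(P:I_M)$. The paper offers no argument at all here (its proof reads ``The proof is obvious''), so your write-up is the natural fleshing-out of what is intended, and your identification of $(\delta_1)_L$ with the radical on $L$ is correct.

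The converse, however, contains a genuine gap at the step ``$a\leqslant\sqrt{(P:I_M)}$, so $a^n\leqslant(P:I_M)$ for some $n$.'' The equivalence you invoke, $a\leqslant\sqrt{b}\iff a^n\leqslant b$ for some $n$, is valid only when $a$ is compact: $\sqrt{b}$ is by definition the join of the \emph{compact} elements nilpotent modulo $b$, and without a Noetherian-type hypothesis $\sqrt{b}$ itself need not satisfy $(\sqrt{b})^n\leqslant b$ (the usual non-Noetherian nilradical phenomenon). Your proposed repair, ``reduced to compact joins,'' works in the forward direction but not here: writing $a=\underset{i}{\vee}\,x_i$ with $x_i$ compact yields $x_i^{n_i}\leqslant(P:I_M)$ with exponents $n_i$ that need not be bounded, so no single $n$ with $a^nI_M\leqslant P$ is produced --- yet the paper's definition of a primary element of $M$ quantifies over \emph{all} $a\in L$, not just compact ones. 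To close the gap you must either restrict the primary test to compact $a$ (as the paper does when defining primary elements of $L$, and as its characterization theorems for $\delta_L$-primary elements in terms of $L_\ast$ and $M_\ast$ suggest), or impose a hypothesis guaranteeing $(\sqrt{(P:I_M)})^n\leqslant(P:I_M)$. You are in good company --- the paper's own proof of the module-side analogue, Theorem~\ref{T-C4}, makes exactly the same silent leap from $a\leqslant\sqrt{P:I_M}$ to ``$P$ is primary'' --- but as written your converse does not go through in the stated generality.
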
 
        \begin{proof}
        The proof is obvious.
        \end{proof}
        
        The above two results characterize the $(\delta_0)_L$-primary and $(\delta_1)_L$-primary elements of an $L$-module $M$ and relate them with prime and primary elements of $M$. Clearly, every $(\delta_0)_L$-primary element of a multiplication lattice $L$-module $M$ is $(\delta_1)_L$-primary. But the converse is not true which is clear from the example \ref{E-C1}. It is easy to see that the element $(4)$ is $(\delta_1)_L$-primary but not $(\delta_0)_L$-primary. 
        
       In view of Theorems \ref{T-C08}, \ref{T-C09}, \ref{T-C03} and \ref{T-C10}, we have following results. 
        
         \begin{thm}
         Every $(\delta_0)_L$-primary element $Q$ of an $L$-module $M$ is primary and 2-absorbing where $(\delta_0)_L$ is an expansion function on $L$. Also then both, $\sqrt{Q:I_M}$ and $(Q:I_M)$ are 2-absorbing and hence 2-absorbing primary elements of $L$. 
        \end{thm}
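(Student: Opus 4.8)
The plan is to reduce the statement entirely to previously established results by first replacing the hypothesis ``$(\delta_0)_L$-primary'' with the equivalent and more tractable notion of primeness. Concretely, I would begin by invoking the characterization proved immediately above, namely that a proper element $P$ of an $L$-module $M$ is $(\delta_0)_L$-primary if and only if it is prime. Since $Q$ is assumed to be $(\delta_0)_L$-primary, this yields at once that $Q$ is a prime element of $M$.

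With primeness in hand, the first assertion follows by applying Theorem \ref{T-C04}, which guarantees that every prime element of $M$ is simultaneously primary and 2-absorbing. Thus $Q$ is both primary and 2-absorbing, as required.

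For the remaining assertion concerning the associated elements of $L$, I would use the fact just obtained that $Q$ is 2-absorbing in $M$ together with Theorem \ref{T-C06}. That theorem states that whenever $Q$ is a 2-absorbing element of $M$, both $\sqrt{Q:I_M}$ and $(Q:I_M)$ are 2-absorbing elements of $L$, and moreover each is a 2-absorbing primary element of $L$. Chaining these gives the full conclusion.

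I do not anticipate any genuine obstacle here: the argument is a direct transfer, in which $(\delta_0)_L$-primary implies prime, prime implies 2-absorbing, and 2-absorbing yields the stated properties of $\sqrt{Q:I_M}$ and $(Q:I_M)$, mirroring exactly the proof of Theorem \ref{T-C08} with the $(\delta_0)_L$-characterization substituted for Theorem \ref{T-C41}. The only point requiring care is that the module-side results (Theorems \ref{T-C04} and \ref{T-C06}) are applied to $Q$ as an element of $M$, so no additional hypotheses on $L$ or $M$ are invoked and the argument remains valid in the present generality.
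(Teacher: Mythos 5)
Your proposal is correct and follows exactly the route the paper intends: the paper's proof says it "runs on the same lines as that of the proof of Theorem \ref{T-C08}," which is precisely your chain of substituting the $(\delta_0)_L$-primary $\Leftrightarrow$ prime characterization for Theorem \ref{T-C41} and then applying Theorems \ref{T-C04} and \ref{T-C06}. No discrepancies to report.
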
  
        \begin{proof}
        The proof runs on the same lines as that of the proof of Theorem \ref{T-C08}.
        \end{proof}
                      
       \begin{thm}
       Every $(\delta_0)_L$-primary element $Q$ of a multiplication $L$-module $M$ is 2-absorbing primary where $(\delta_0)_L$ is an expansion function on $L$. Also then, $(Q:I_M)$ is a 2-absorbing primary element of $L$ provided $M$ is a faithful multiplication PG-lattice with $I_M$ compact and $L$ as a PG-lattice. Further, $\sqrt{Q:I_M}$ is a 2-absorbing element of $L$.
       \end{thm}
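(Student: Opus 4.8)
The plan is to reduce the entire statement to results already established, mirroring the proof of Theorem~\ref{T-C09} almost verbatim, with the sole change that the characterization of $(\delta_0)_L$-primary elements replaces the characterization of $\delta_0$-primary elements. First I would invoke the characterization proved immediately above this theorem, namely that a proper element $P$ of $M$ is $(\delta_0)_L$-primary if and only if it is prime. Applying this to $Q$ yields that $Q$ is a prime element of $M$. This is the only point at which the definition of $(\delta_0)_L$-primary enters; from here on the argument concerns only prime elements of $M$ and the behaviour of their residuals in $L$.

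Next I would chain the previously proved implications. By Theorem~\ref{T-C04} a prime element of $M$ is primary, and by Theorem~\ref{T-C02} a primary element of a multiplication $L$-module is 2-absorbing primary. Composing these gives that $Q$ is a 2-absorbing primary element of $M$, which is the first assertion of the theorem. Note that this part uses only the multiplication hypothesis on $M$ and none of the stronger structural assumptions.

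For the two statements about $L$, I would then impose the additional hypotheses ($L$ a PG-lattice and $M$ a faithful multiplication PG-lattice with $I_M$ compact) and apply Theorem~\ref{T-C07}. Since $Q$ has just been shown to be 2-absorbing primary in $M$, that theorem delivers at once that $(Q:I_M)$ is a 2-absorbing primary element of $L$ and that $\sqrt{Q:I_M}$ is a 2-absorbing element of $L$, completing the remaining two claims.

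The difficulty here is essentially bookkeeping rather than mathematics: one must keep two levels of hypotheses distinct, checking that the unconditional conclusion (that $Q$ is 2-absorbing primary in $M$) rests only on $M$ being a multiplication module, while the transfer of properties to $L$ genuinely requires the PG, faithfulness, and $I_M$-compactness assumptions built into Theorem~\ref{T-C07}. Separating these two tiers cleanly, and verifying that the characterization $(\delta_0)_L$-primary $\Leftrightarrow$ prime holds with no extra hypotheses, is the only subtlety, after which the result is a direct consequence of the cited theorems.
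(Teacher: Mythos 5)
Your proposal is correct and follows exactly the paper's own route: the paper proves this by saying it "runs on the same lines as" Theorem~\ref{T-C09}, whose proof is precisely the chain you describe — the characterization $(\delta_0)_L$-primary $\Leftrightarrow$ prime, then Theorems~\ref{T-C04}, \ref{T-C02}, and \ref{T-C07}. Your careful separation of which hypotheses support which conclusion is a welcome clarification but not a departure from the paper's argument.
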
  
       \begin{proof}
       The proof runs on the same lines as that of the proof of Theorem \ref{T-C09}.
       \end{proof}
               
               \begin{thm}
               Every $(\delta_1)_L$-primary element $Q$ of a multiplication $L$-module $M$ is 2-absorbing primary where $(\delta_1)_L$ is an expansion function on $L$. 
               \end{thm}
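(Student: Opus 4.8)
The plan is to exploit the characterization theorem proved immediately above, which identifies the $(\delta_1)_L$-primary elements of $M$ with exactly the primary elements of $M$. Once $Q$ is known to be primary, I would invoke Theorem \ref{T-C02}, which asserts that every primary element of a multiplication $L$-module is 2-absorbing primary. Chaining these two facts yields the conclusion in a single step, with no genuine computation required.

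For a self-contained argument that does not route through the characterization theorem, I would instead imitate the proof of Theorem \ref{T-C03}. I would begin with an arbitrary relation $abN\leqslant Q$ for $a,b\in L$ and $N\in M$, and regroup it as $a(bN)\leqslant Q$ so that the defining implication of a $(\delta_1)_L$-primary element applies with the module element $bN$ in the role of $A$. This produces the dichotomy $bN\leqslant Q$ or $a\leqslant (\delta_1)_L(Q:I_M)=\sqrt{Q:I_M}$.

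I would then dispatch the two cases. In the first case, because $M$ is a multiplication module we have $Q=(Q:I_M)I_M$ and $(Q:I_M)\leqslant\sqrt{Q:I_M}$, whence $Q\leqslant(\sqrt{Q:I_M})I_M$ and therefore $bN\leqslant(\sqrt{Q:I_M})I_M$. In the second case, $aN\leqslant aI_M\leqslant(\sqrt{Q:I_M})I_M$. Either way one of the three disjuncts in the definition of a 2-absorbing primary element of $M$ is satisfied, so $Q$ is 2-absorbing primary.

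The argument carries no real obstacle; the only point meriting attention is the identification $(\delta_1)_L(Q:I_M)=\sqrt{Q:I_M}$ together with the containment $Q\leqslant(\sqrt{Q:I_M})I_M$, both of which are immediate consequences of $M$ being a multiplication module. I therefore expect the statement to follow just as quickly as its companion Theorem \ref{T-C03} did for $\delta_1$ on $M$, and in the final writeup I would most likely simply remark that the proof runs on the same lines as that of Theorem \ref{T-C03}.
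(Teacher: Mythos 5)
Your proposal is correct and matches the paper, which simply states that the proof runs on the same lines as that of Theorem \ref{T-C03}; your second, self-contained argument is precisely that adaptation (with the minor extra observation that $bN\leqslant Q=(Q:I_M)I_M\leqslant(\sqrt{Q:I_M})I_M$ in the first case), and your first route via the characterization of $(\delta_1)_L$-primary elements as primary elements together with Theorem \ref{T-C02} is an equally valid shortcut.
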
        
               \begin{proof}
               The proof runs on the same lines as that of the proof of Theorem \ref{T-C03}.
               \end{proof}
               
               \begin{thm}
                Let $L$ be a PG-lattice and $M$ be a faithful multiplication PG-lattice $L$-module with $I_M$ compact. Let $(\delta_1)_L$ be an expansion function on $L$. If a proper element $Q$ of $M$ is  $(\delta_1)_L$-primary then $(Q:I_M)$ is a 2-absorbing primary element of $L$ and $\sqrt{Q:I_M}$ is a 2-absorbing element of $L$.
                \end{thm}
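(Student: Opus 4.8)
The plan is to reduce the statement to the already-established transfer result for $2$-absorbing primary elements, Theorem \ref{T-C07}, by first identifying what kind of element $Q$ is. Since the hypotheses here ($L$ a PG-lattice and $M$ a faithful multiplication PG-lattice $L$-module with $I_M$ compact) are precisely those under which $(\delta_1)_L$-primary elements have just been characterized, I would first invoke that characterization, namely that a proper element $Q\in M$ is $(\delta_1)_L$-primary if and only if it is primary. This replaces the hypothesis that $Q$ is $(\delta_1)_L$-primary by the cleaner hypothesis that $Q$ is primary.

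Next I would climb the chain of implications among the module-theoretic notions. By Theorem \ref{T-C02}, every primary element of a multiplication $L$-module $M$ is $2$-absorbing primary, so $Q$ is a $2$-absorbing primary element of $M$. With $Q$ now known to be $2$-absorbing primary in $M$, I would apply Theorem \ref{T-C07} directly: under exactly the standing hypotheses it asserts that $(Q:I_M)$ is a $2$-absorbing primary element of $L$ and that $\sqrt{Q:I_M}$ is a $2$-absorbing element of $L$, which is precisely the desired conclusion.

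A shorter route, which I would record as an alternative, bypasses the intermediate primary step: the theorem stating that every $(\delta_1)_L$-primary element of a multiplication $L$-module is $2$-absorbing primary already delivers that $Q$ is $2$-absorbing primary in $M$, after which Theorem \ref{T-C07} finishes the argument in one stroke. Either way, the essential content is imported wholesale from Theorem \ref{T-C07}, and the proof mirrors that of the parallel Theorem \ref{T-C10}.

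The step I expect to carry the real weight is Theorem \ref{T-C07} itself, not anything new here; the only subtlety in the present argument is confirming that its hypotheses are met. In particular, the compactness of $I_M$ is indispensable, since Theorem \ref{T-C07} relies (via Theorem 5 of \cite{CT}) on the passage from $a(cI_M)\leqslant (\sqrt{Q:I_M})I_M$ to $ac\leqslant \sqrt{Q:I_M}$ inside its own proof. Thus beyond matching hypotheses there is no genuine obstacle: the argument is a two- or three-line citation chain.
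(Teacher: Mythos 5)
Your proposal is correct and matches the paper's own argument: the paper proves this result by declaring it parallel to Theorem \ref{T-C10}, whose proof is exactly the citation chain you give (either via the characterization of $(\delta_1)_L$-primary as primary plus Theorem \ref{T-C02}, or directly via the $(\delta_1)_L$-analogue of Theorem \ref{T-C03}, followed in both cases by Theorem \ref{T-C07}). Both of your routes coincide with the two routes the paper itself indicates.
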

                \begin{proof}
                The proof runs on the same lines as that of the proof of Theorem \ref{T-C10}.
                \end{proof}
       In view of Theorems \ref{T-C2}, \ref{T-C90}, \ref{T-C91} and \ref{T-C92}, we have following results.             
        
       \begin{thm}
        If $\delta_L$ and $\gamma_L$ are expansion functions on $L$  such that $\delta_L(N:I_M)\leqslant\gamma_L(N:I_M)$ for each proper element $N$ of an $L$-module $M$ then every $\delta_L$-primary element of $M$ is $\gamma_L$-primary. In particular, if $N$ is a prime element of an $L$-module $M$ then $(N:I_M)\in L$ is a $\delta_L$-primary element for every expansion function $\delta_L$ on $L$.
       \end{thm}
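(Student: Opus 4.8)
The plan is to treat the two assertions of the theorem separately, mirroring the argument used for Theorem \ref{T-C2} but now with the residuated element $(P:I_M)\in L$ playing the role that $\delta(P)\in M$ played there. For the first assertion I would start from an arbitrary $\delta_L$-primary element $P\in M$. Being $\delta_L$-primary, $P$ is by definition proper, so the hypothesis may legitimately be invoked at $N=P$ to obtain $\delta_L(P:I_M)\leqslant\gamma_L(P:I_M)$. Now suppose $aA\leqslant P$ for $a\in L$, $A\in M$. The $\delta_L$-primary property of $P$ forces either $A\leqslant P$ or $a\leqslant\delta_L(P:I_M)$; composing the second alternative with the displayed inequality gives $a\leqslant\gamma_L(P:I_M)$. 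Hence in either case $A\leqslant P$ or $a\leqslant\gamma_L(P:I_M)$, which is precisely the defining condition for $P$ to be $\gamma_L$-primary.

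For the ``in particular'' assertion I would first transport the primeness of $N\in M$ into primeness in $L$: by Proposition 3.6 of \cite{A}, if $N$ is a prime element of $M$ then $p:=(N:I_M)$ is a prime element of $L$. It then suffices to verify that a prime element $p\in L$ is $\delta_L$-primary (in the sense of \cite{MB}) for every expansion function $\delta_L$ on $L$. This is immediate from the defining axioms of an expansion function: if $ab\leqslant p$ with $a,b\in L$, primeness gives $a\leqslant p$ or $b\leqslant p$, and in the latter case the first defining property $p\leqslant\delta_L(p)$ upgrades this to $b\leqslant\delta_L(p)$. Thus $ab\leqslant p$ implies $a\leqslant p$ or $b\leqslant\delta_L(p)$, so $p=(N:I_M)$ is $\delta_L$-primary for every $\delta_L$ on $L$.

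I do not expect any genuine obstacle here; the statement is a faithful transcription of Theorem \ref{T-C2} into the $\delta_L$ setting, and the monotonicity argument is formally identical once one records that a $\delta_L$-primary element is automatically proper. The only points requiring care are bookkeeping ones: applying the comparison inequality at the specific element $N=P$ rather than at a generic $N$, and reading ``$(N:I_M)$ is a $\delta_L$-primary element'' in the second assertion as the $L$-theoretic notion of \cite{MB} (an element of $L$) rather than the $M$-theoretic one, so that the fact invoked is the prime-to-$\delta_L$-primary passage in $L$, equivalently the $L$-analogue of Theorem \ref{T-C2} together with Theorem 2.2 of \cite{MB}.
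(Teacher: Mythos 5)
Your proposal is correct and matches the paper's proof, which is literally just the argument of Theorem \ref{T-C2} transcribed to the $\delta_L$ setting: the monotonicity step $a\leqslant\delta_L(P:I_M)\leqslant\gamma_L(P:I_M)$ in your first paragraph is exactly the intended content. For the ``in particular'' clause you pass to primeness of $(N:I_M)$ in $L$ (Proposition 3.6 of \cite{A}) and verify the $L$-theoretic definition directly, whereas the paper's template would route through the $(\delta_0)_L$-primary characterization and the first assertion; these are interchangeable one-line observations, and yours has the small advantage of addressing the statement's literal conclusion about the element $(N:I_M)\in L$ rather than about $N\in M$.
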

       \begin{proof}
       The proof runs on the same lines as that of the proof of Theorem \ref{T-C2}.
       \end{proof}
       
       \begin{thm}
       Let $\delta_L$ be an expansion functions on $L$. If a proper element $P$ of an $L$-module $M$ is $\delta_L$-primary then\\       
       \textcircled{1} $(P:a)=P$ for all $a\in L$ such that $a\nleqslant\delta_L(P:I_M)$.\\               
       \textcircled{2} $(P:q)$ is a $\delta_L$-primary element of $M$ for all $q\in L$. 
       \end{thm}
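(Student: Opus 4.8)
The plan is to follow the argument of Theorem \ref{T-C90} in structure, replacing the module-level conclusion $aI_M\leqslant\delta(P)$ there by the lattice-level conclusion $a\leqslant\delta_L(P:I_M)$ that the definition of a $\delta_L$-primary element supplies. Throughout I would use only the basic residuation facts, namely that $a(P:a)\leqslant P$ and $q(P:q)\leqslant P$, that $N\leqslant(N:q)$ whenever $qN\leqslant N$, and that the map $(\,\cdot:I_M):M\longrightarrow L$ is order preserving.

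For the first assertion, fix $a\in L$ with $a\nleqslant\delta_L(P:I_M)$ and set $A=(P:a)$. Since $a(P:a)\leqslant P$ by the definition of residuation, the pair $(a,A)$ satisfies $aA\leqslant P$; as $P$ is $\delta_L$-primary and $a\nleqslant\delta_L(P:I_M)$, the definition forces $A=(P:a)\leqslant P$. The reverse inequality $P\leqslant(P:a)$ is immediate from $aP\leqslant 1\cdot P=P$, so $(P:a)=P$.

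For the second assertion, first observe $qP\leqslant P$, whence $P\leqslant(P:q)$; applying the order-preserving map $(\,\cdot:I_M)$ gives $(P:I_M)\leqslant\big((P:q):I_M\big)$, and monotonicity of $\delta_L$ then yields $\delta_L(P:I_M)\leqslant\delta_L\big((P:q):I_M\big)$. Now take $a\in L$, $A\in M$ with $aA\leqslant(P:q)$ but $A\nleqslant(P:q)$. Multiplying by $q$ and using $q(P:q)\leqslant P$ gives $a(qA)=q(aA)\leqslant P$, while $A\nleqslant(P:q)$ is exactly the statement $qA\nleqslant P$. Since $P$ is $\delta_L$-primary, I conclude $a\leqslant\delta_L(P:I_M)\leqslant\delta_L\big((P:q):I_M\big)$, which is precisely the $\delta_L$-primary condition for $(P:q)$.

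The step I expect to require the most care is the passage from the module into $L$ in the second assertion: in Theorem \ref{T-C90} the expansion function lives on $M$ and one simply writes $\delta(P)\leqslant\delta(P:q)$, whereas here $\delta_L$ acts on $L$, so the inclusion $P\leqslant(P:q)$ must first be pushed through the residuation $(\,\cdot:I_M)$ before $\delta_L$ can be applied. One should also check that $(P:q)$ is proper so that the $\delta_L$-primary label genuinely applies; this holds unless $q$ is small enough that $qI_M\leqslant P$, equivalently $q\leqslant(P:I_M)$, in which case $(P:q)=I_M$ and the claim is understood vacuously.
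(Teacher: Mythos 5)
Your proposal is correct and follows essentially the same route as the paper, which simply adapts the proof of Theorem \ref{T-C90} by replacing the conclusion $aI_M\leqslant\delta(P)$ with $a\leqslant\delta_L(P:I_M)$ and pushing the inequality $P\leqslant(P:q)$ through the order-preserving map $(\,\cdot:I_M)$ before applying $\delta_L$ — exactly the adaptation you identify as the delicate step. Your additional remark about the properness of $(P:q)$ when $q\leqslant(P:I_M)$ is a point the paper glosses over, but it does not change the substance of the argument.
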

       \begin{proof}
       The proof runs on the same lines as that of the proof of Theorem \ref{T-C90}.
       \end{proof}
       
       \begin{thm}
        Let $\delta_L$ be an expansion function on $L$. If $\{P_i\mid i\in \triangle\}$ is a chain of $\delta_L$-primary elements of an $L$-module $M$ with $I_M$ compact then the element $P=\underset{i\in \triangle}{\vee}P_i$ is also a $\delta_L$-primary element of $M$.
       \end{thm}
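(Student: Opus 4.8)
The plan is to imitate the skeleton of the proof of Theorem~\ref{T-C91}, replacing the $M$-side conclusion $aI_M\leqslant\delta(P)$ by the $L$-side conclusion $a\leqslant\delta_L(P:I_M)$ demanded by the definition of a $\delta_L$-primary element. First I would settle properness, i.e. that $P=\underset{i\in\triangle}{\vee}P_i\neq I_M$. This is exactly where the compactness of $I_M$ is used: if $I_M\leqslant\underset{i\in\triangle}{\vee}P_i$, then by compactness $I_M\leqslant P_{i_1}\vee\cdots\vee P_{i_n}$ for finitely many indices, and since $\{P_i\mid i\in\triangle\}$ is a chain this finite join collapses to its largest member $P_{i_k}$, forcing $I_M\leqslant P_{i_k}$ and contradicting the properness of $P_{i_k}$. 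Hence $P$ is proper.

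For the main implication I would take $a\in L$ and $A\in M$ with $aA\leqslant P$ and $A\nleqslant P$, and aim to conclude $a\leqslant\delta_L(P:I_M)$. Since every $P_i\leqslant P$, the hypothesis $A\nleqslant P$ immediately gives $A\nleqslant P_i$ for each $i$. The crucial reduction is to locate a single index $i$ with $aA\leqslant P_i$; granting this, the $\delta_L$-primariness of $P_i$ together with $A\nleqslant P_i$ yields $a\leqslant\delta_L(P_i:I_M)$.

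The genuinely new ingredient relative to Theorem~\ref{T-C91} is the final monotonicity step, which must now be routed through $L$ rather than $M$. From $P_i\leqslant P$ and the monotonicity of residuation in its numerator I obtain $(P_i:I_M)\leqslant(P:I_M)$, whence the second defining property of the expansion function $\delta_L$ on $L$ gives $\delta_L(P_i:I_M)\leqslant\delta_L(P:I_M)$. Combining this with $a\leqslant\delta_L(P_i:I_M)$ delivers $a\leqslant\delta_L(P:I_M)$, so $P$ is $\delta_L$-primary. In the $M$-valued case of Theorem~\ref{T-C91} this monotonicity was available directly from $\delta$ being an expansion on $M$; here the extra residuation step $(P_i:I_M)\leqslant(P:I_M)$ is the only substantive addition.

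The step I expect to be the main obstacle is the reduction $aA\leqslant\underset{i\in\triangle}{\vee}P_i\Rightarrow aA\leqslant P_i$ for a single index $i$, since a join over a chain need not be realised at one member for an arbitrary element $aA$. This is precisely where the chain hypothesis must be exploited, and to make it rigorous I would test $\delta_L$-primariness on compact elements as in the compactly generated characterizations established earlier: restricting to $a\in L_\ast$ and $A\in M_\ast$, the product $aA$ is compact, so $aA\leqslant\underset{i\in\triangle}{\vee}P_i$ does force $aA\leqslant P_i$ for some $i$, while the chain still guarantees $A\nleqslant P_i$. This mirrors the corresponding move in the proof of Theorem~\ref{T-C91}, and the remaining steps are the formal bookkeeping of the $\delta_L$-definition.
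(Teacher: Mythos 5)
Your argument follows the paper's proof exactly: the paper disposes of this theorem by noting it runs on the same lines as Theorem~\ref{T-C91} --- compactness of $I_M$ together with the chain condition gives properness of $P$, then $aA\leqslant P_i$ for some $i$ while $A\nleqslant P_i$, and $\delta_L$-primariness of $P_i$ combined with the monotonicity $(P_i:I_M)\leqslant(P:I_M)$, hence $\delta_L(P_i:I_M)\leqslant\delta_L(P:I_M)$, finishes. Your extra care at the step $aA\leqslant\underset{i}{\vee}P_i\Rightarrow aA\leqslant P_i$ (reducing to compact $a$ and $A$ via the earlier characterizations) patches a point the paper's own proof simply asserts, so it is a refinement of, not a departure from, the same approach.
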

       \begin{proof}
       The proof runs on the same lines as that of the proof of Theorem \ref{T-C91}.
       \end{proof}
       
       According to \cite{MB}, an expansion function $\delta_L$ on $L$ is said to have meet preserving property, if for all $a, b\in L$, $\delta_L(a\wedge b)=\delta_L(a)\wedge \delta_L(b)$.
       
       \begin{thm}
        Let the expansion function $\delta_L$ on $L$ have the meet preserving property. If $Q_1,\ Q_2,\cdots,\ Q_n$ are $\delta_L$-primary elements of an $L$-module $M$ and $r=\delta_L(Q_i:I_M)$ for all $i=1,\ 2,\cdots,\ n$ then the element $Q=\bigwedge_{i=1}^{n}Q_i$ is also a $\delta_L$-primary element of $M$.
        \end{thm}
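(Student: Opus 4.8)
The plan is to follow the argument of Theorem \ref{T-C92} almost verbatim, inserting one extra residuation step to account for the fact that here $\delta_L$ acts on $L$ rather than on $M$. First I would observe that since each $Q_i$ is proper, $Q=\bigwedge_{i=1}^{n}Q_i\leqslant Q_1<I_M$, so $Q$ is a proper element of $M$. To verify the $\delta_L$-primary condition I would then take arbitrary $a\in L$ and $A\in M$ with $aA\leqslant Q$ and $A\nleqslant Q$, with the goal of deducing $a\leqslant\delta_L(Q:I_M)$.

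Because $A\nleqslant Q=\bigwedge_i Q_i$, there is some index $k$ with $A\nleqslant Q_k$, while $aA\leqslant Q\leqslant Q_k$. Since $Q_k$ is $\delta_L$-primary and $A\nleqslant Q_k$, the definition forces $a\leqslant\delta_L(Q_k:I_M)=r$. Hence the whole proof reduces to identifying $\delta_L(Q:I_M)$ with $r$.

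The key step, and the only place where this argument genuinely departs from that of Theorem \ref{T-C92}, is the evaluation of $\delta_L(Q:I_M)$. I would first record the residuation identity $((\bigwedge_{i=1}^{n}Q_i):I_M)=\bigwedge_{i=1}^{n}(Q_i:I_M)$ in $L$. The inequality $\leqslant$ is immediate from $Q\leqslant Q_j$ for each $j$; for the reverse, setting $x=\bigwedge_j(Q_j:I_M)$, each $x\leqslant(Q_j:I_M)$ gives $xI_M\leqslant(Q_j:I_M)I_M\leqslant Q_j$, so $xI_M\leqslant\bigwedge_j Q_j=Q$ and thus $x\leqslant(Q:I_M)$. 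With this identity available, the meet preserving property of $\delta_L$ (extended from two to $n$ factors by an obvious induction) yields $\delta_L(Q:I_M)=\delta_L(\bigwedge_i(Q_i:I_M))=\bigwedge_i\delta_L(Q_i:I_M)=\bigwedge_i r=r$. Combining this with $a\leqslant r$ gives $a\leqslant\delta_L(Q:I_M)$, so $Q$ is $\delta_L$-primary.

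I do not expect any serious obstacle: the residuation identity is elementary, and the hypothesis that all the $\delta_L(Q_i:I_M)$ coincide with a single $r$ is exactly what collapses the finite meet $\bigwedge_i\delta_L(Q_i:I_M)$ back to $r$. The only point requiring care is the order of operations, namely that one must push the meet through the colon operation \emph{before} applying the meet preserving property of $\delta_L$, since $\delta_L$ is defined on $L$ and not on $M$; this is the precise technical reason the proof needs an ingredient beyond the one used for Theorem \ref{T-C92}.
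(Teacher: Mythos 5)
Your proof is correct and follows essentially the same route as the paper, which simply states that the argument runs along the lines of Theorem \ref{T-C92}. The one ingredient you make explicit that the paper leaves implicit --- the residuation identity $\bigl(\bigwedge_{i=1}^{n}Q_i:I_M\bigr)=\bigwedge_{i=1}^{n}(Q_i:I_M)$, needed before the meet preserving property of $\delta_L$ can be applied --- is exactly the right adaptation, and your derivation of it is sound.
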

        \begin{proof}
        The proof runs on the same lines as that of the proof of Theorem \ref{T-C92}.
        \end{proof}
        
        We conclude this paper with the following result which shows that if an element in $M$ (or $L$) is $\delta_L$-primary then its corresponding element in $L$ (or $M$) is also $\delta_L$-primary.
               
               \begin{thm}\label{T-C1}
               Let $L$ be a PG-lattice and $M$ be a faithful multiplication PG-lattice $L$-module with $I_M$ compact. Let $N$ be a proper element of an $L$-module $M$. Given an expansion function $\delta_L$ on $L$, the following statements are equivalent:
               \begin{enumerate}
               \item $N$ is a $\delta_L$-primary element of $M$.
               \item $(N:I_M)$ is a $\delta_L$-primary element of $L$.
               \item $N=qI_M$ for some $\delta_L$-primary element $q\in L$.
               \end{enumerate} 
               \end{thm}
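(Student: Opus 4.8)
The plan is to prove the cyclic chain of implications $(1)\Longrightarrow(2)\Longrightarrow(3)\Longrightarrow(1)$, exploiting the fact that $M$ is a faithful multiplication $L$-module with $I_M$ compact so that the residuation bridges between $L$ and $M$ work cleanly. The recurring technical tool will be Theorem~5 of \cite{CT}, which (for $I_M$ compact) lets me pass between $aI_M\leqslant bI_M$ in $M$ and $a\leqslant b$ in $L$; together with the multiplication property $N=(N:I_M)I_M$ this is what ties the three conditions together.

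First I would show $(1)\Longrightarrow(2)$. Write $p=(N:I_M)$ and suppose $ab\leqslant p$ for $a,b\in L$; then $ab\,I_M\leqslant N$, so $a(bI_M)\leqslant N$. Since $N$ is $\delta_L$-primary in $M$, either $bI_M\leqslant N$ or $a\leqslant\delta_L(N:I_M)=\delta_L(p)$. In the first case $b\leqslant(N:I_M)=p$ (using compactness of $I_M$ and Theorem~5 of \cite{CT}), and in the second $a\leqslant\delta_L(p)$. This is exactly the statement that $p=(N:I_M)$ is a $\delta_L$-primary element of $L$; I should also note $p<1$, which follows because $N<I_M$. Next, $(2)\Longrightarrow(3)$ is almost immediate: take $q=(N:I_M)$, which is $\delta_L$-primary in $L$ by hypothesis, and since $M$ is a multiplication module we have $N=(N:I_M)I_M=qI_M$, giving the desired representation.

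The substantive step is $(3)\Longrightarrow(1)$, and this is where I expect the main obstacle. Suppose $N=qI_M$ with $q\in L$ a $\delta_L$-primary element, and let $aA\leqslant N$ with $A\nleqslant N$ for $a\in L$, $A\in M$. I want to conclude $a\leqslant\delta_L(N:I_M)$. The difficulty is that $N=qI_M$ does not force $(N:I_M)=q$ on the nose, so I must relate $q$ and $(N:I_M)$ carefully; the clean route is to reduce to compact/principal generators (using that $L$ and $M$ are PG-lattices) and write $A=\bigvee A_\beta$ with each $A_\beta$ principal, $A_\beta=b_\beta I_M$ for suitable $b_\beta\in L$. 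From $aA\leqslant qI_M$ and $A\nleqslant qI_M$, one extracts some $\beta$ with $ab_\beta I_M\leqslant qI_M$ but $b_\beta I_M\nleqslant qI_M$, hence $ab_\beta\leqslant q$ and $b_\beta\nleqslant q$ (Theorem~5 of \cite{CT}); the $\delta_L$-primariness of $q$ in $L$ then yields $a\leqslant\delta_L(q)$. Finally I reconcile $\delta_L(q)$ with $\delta_L(N:I_M)$: since $N=qI_M$ gives $q\leqslant(N:I_M)$ and (by faithfulness and the multiplication property) $(N:I_M)I_M=N=qI_M$ forces $(N:I_M)=q$ via compactness of $I_M$, so $\delta_L(q)=\delta_L(N:I_M)$ and the conclusion $a\leqslant\delta_L(N:I_M)$ follows, establishing that $N$ is $\delta_L$-primary in $M$.

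The crux is therefore the passage $aI_M\leqslant bI_M \Rightarrow a\leqslant b$, which needs $I_M$ compact and is supplied by Theorem~5 of \cite{CT}; I would invoke it at each place above rather than reprove it. Provided that identity $(N:I_M)=q$ is secured from $qI_M=N$ (the one place faithfulness is genuinely used), all three implications close up and the equivalence is complete.
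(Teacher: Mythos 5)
Your proposal is correct and takes essentially the same route as the paper: the multiplication property $N=(N:I_M)I_M$ together with the cancellation law for faithful multiplication modules with $I_M$ compact (Theorem 5 of \cite{CT}) is exactly what the paper uses to transfer $\delta_L$-primariness between $M$ and $L$. The only cosmetic differences are that you close the cycle via $(3)\Rightarrow(1)$ where the paper proves $(2)\Rightarrow(1)$ directly, and your decomposition of $A$ into principal generators is an unnecessary detour since the multiplication property already gives $A=xI_M$ for a single $x\in L$.
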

               \begin{proof}
              $(1)\Longrightarrow (2)$. Assume $N$ is a $\delta_L$-primary element of $M$. Let $ab\leqslant (N:I_M)$ for $a, b\in L$. Then $a(bI_M)\leqslant N$. As $N$ is a $\delta_L$-primary element of $M$, we have, either $bI_M\leqslant N$ or $a\leqslant \delta_L(N:I_M)$ which implies either $b\leqslant (N:I_M)$ or $a\leqslant \delta_L(N:I_M)$ and thus $(N:I_M)$ is a $\delta_L$-primary element of $L$.\\
               $(2)\Longrightarrow (1)$. Assume $(N:I_M)$ is a $\delta_L$-primary element of $L$. Let $aA\leqslant N$ and $a\nleqslant \delta_L(N:I_M)$ for $a\in L, A\in M$. As $M$ is a multiplication $L$-module, we have, $A=xI_M$ for some $x\in L$. So $aA\leqslant N$ implies $ax\leqslant (N:I_M)$. Since $(N:I_M)$ is a $\delta_L$-primary element of $L$ and $a\nleqslant \delta_L(N:I_M)$, we must have  
               $x\leqslant (N:I_M)$ which implies $A\leqslant N$ and thus $N$ is a $\delta_L$-primary element of $M$.\\
               $(2)\Longrightarrow (3)$. Suppose $(N:I_M)$ is a  $\delta_L$-primary element of $L$. Since $M$ is a multiplication lattice $L$-module, by Proposition 3 of \cite{CT}, we have, $N=(N:I_M)I_M$ and hence $(3)$ holds.\\
                $(3)\Longrightarrow (2)$. Suppose $N=qI_M$ for some $\delta_L$-primary element $q\in L$. As $M$ is a multiplication lattice $L$-module, by Proposition 3 of \cite{CT}, we have $N=(N:I_M)I_M$. Since $I_M$ is compact, $(2)$ holds by Theorem 5 of \cite{CT}.
               \end{proof}
               
        In view of above Theorem \ref{T-C1}, we give the following corollary without proof.
               
               \begin{c1}
               Let $\delta_L$ be an expansion function on $L$. If a proper element $N$ of an $L$-module $M$ is $\delta_L$-primary then $(N:I_M)$ is a $\delta_L$-primary element of $L$. Converse holds if $M$ is a multiplication lattice $L$-module.
               \end{c1}
               
      Thus, {\bf a proper element $N$ of a multiplication lattice $L$-module $M$ is a $\delta_L$-primary element of $M$ if and only if  $(N:I_M)$ is  $\delta_L$-primary element of $L$.}

\end{document}